
\documentclass{amsart}%
\usepackage{amssymb}
\usepackage{amsmath}
\usepackage{hyperref}
\usepackage[pagewise]{lineno}
\usepackage[american]{babel}
\usepackage{amsfonts}
\usepackage{graphicx}%
\setcounter{MaxMatrixCols}{30}
\newtheorem{theorem}{Theorem}[section]
\newtheorem{lemma}[theorem]{Lemma}
\newtheorem{corollary}[theorem]{Corollary}
\theoremstyle{definition}
\newtheorem{definition}[theorem]{Definition}
\newtheorem{example}[theorem]{Example}
\newtheorem{proposition}[theorem]{Proposition}

\theoremstyle{remark}
\newtheorem{remark}[theorem]{Remark}
\numberwithin{equation}{section}

\begin{document}
\title{Invariance entropy, quasi-stationary measures and control sets}
\author{Fritz Colonius}
\email{fritz.colonius@math.uni-augsburg.de}

\begin{abstract}
Institut f\"{u}r Mathematik, Universit\"{a}t Augsburg, Augsburg, Germany

\end{abstract}
\begin{abstract}
For control systems in discrete time, this paper discusses measure-theoretic
invariance entropy for a subset $Q$ of the state space with respect to a
quasi-stationary measure obtained by endowing the control range with a
probability measure. The main results show that this entropy is invariant
under measurable transformations and that it is already determined by certain
subsets of $Q$ which are characterized by controllability properties.

\end{abstract}
\subjclass[2010]{ 93C41, 94A17, 37A35}
\date{\today }
\keywords{Invariance entropy, quasi-stationary measures, control sets,
coder-controllers, transitivity set}
\maketitle

\section{Introduction}

Metric invariance entropy provides a measure-theoretic analogue of the
topological notion of (feedback) invariance entropy $h_{inv}(Q)$ of
deterministic control systems, cf. Nair, Evans, Mareels and Moran
\cite{NEMM04} and Kawan \cite{Kawa13}. The present paper discusses metric
invariance entropy and its relations to controllability properties. We
consider control systems in discrete time of the form%
\begin{equation}
x_{k+1}=f(x_{k},u_{k}),k\in\mathbb{N}=\{0,1,\ldots\}, \label{0.1}%
\end{equation}
where $f:M\times\Omega\rightarrow M$ is continuous and $M$ and $\Omega$ are
metric spaces.

For an initial value $x_{0}\in M$ at time $k=0$ and control $u=(u_{k}%
)_{k\geq0}\in\mathcal{U}:=\Omega^{\mathbb{N}}$ we denote the solutions by
$x_{k}=\varphi(k,x_{0},u),k\in\mathbb{N}$. The notion of invariance entropy
$h_{inv}(Q)$ describes the average data rate needed to keep the system in a
given subset $Q$ of $M$ (forward in time). It is constructed with some analogy
to topological entropy of dynamical systems. A major difference of entropy in
a control context to entropy for dynamical systems (cf. Walters \cite{Walt82}
or Viana and Oliveira \cite{VianO16}) is that the minimal required entropy for
the considered control task is of interest instead of the \textquotedblleft
total\textquotedblright\ entropy generated by the dynamical system, hence the
infimum over open covers or partitions is taken instead of the supremum.

The present paper discusses notions of metric invariance entropy modifying and
extending the analysis in Colonius \cite{Colo16a, Colo16b}. A probability
measure on the space $\Omega$ of control values is fixed. Then an associated
quasi-stationary measure $\eta$ for $Q$ is considered and an entropy notion is
constructed that takes into account information on feedbacks. A significant
relaxation compared to topological invariance entropy is that only invariance
with $\eta$-probability one is required (this was not done in \cite{Colo16a,
Colo16b}). The main results are Theorem \ref{Theorem_conjugacy} showing that
the invariance entropy does not decrease under semi-conjugacy, Theorem
\ref{comparison} showing that the topological invariance entropy is an upper
bound for the metric invariance entropy and Theorems \ref{Theorem_B} and
\ref{Theorem_rel_inv} providing conditions under which the metric invariance
entropy is already determined by certain maximal subsets of approximate
controllability within the interior of $Q$ (i.e., invariant $W$-control sets
with $W:=\mathrm{int}Q$).

A general reference to quasi-stationary measures is the monograph Collett,
Martinez and San Martin \cite{ColMS13}; the survey M\'{e}l\'{e}ard and
Villemonais \cite{MeleV12} presents, in particular, applications to population
dynamics where quasi-stationary distributions correspond to plateaus of
mortality rates. Intuitively speaking, quasi-stationarity measures may exist
when exit from $Q$ occurs with probability one for time tending to infinity,
while in finite time a quasi-stationary behavior develops. A bibliography for
quasi-stationary measures with more than 400 entries is due to Pollett
\cite{Poll15}.

For controllability properties in discrete time, the results by Jakubczyk and
Sontag \cite{JakS90} on reachability are fundamental. Related results, in
particular on control sets, are due to Albertini and Sontag \cite{AlbS93,
AlbS91}, Sontag and Wirth \cite{SonW98} and Wirth \cite{Wirt98NOLCOS},
Patr\~{a}o and San Martin \cite{PatrS07} and Colonius, Homburg and Kliemann
\cite{ColoHK10}. The case of $W$-control sets has only been discussed in the
continuous-time case, cf. Colonius and Lettau \cite{ColoL16}. Although many
properties of control sets and $W$-control sets in discrete time are analogous
to those in continuous time, some additional difficulties occur. In
particular, in the proofs one has to replace the interior of a control set by
its transitivity set or its closely related core.

System (\ref{0.1}) together with the measure $\nu$ on $\Omega$ generates a
random dynamical system. Metric and topological entropy of such systems have
been intensely studied, see, e.g., Bogensch\"{u}tz \cite{Boge92}. If the
control range $\Omega$ in (\ref{0.1}) is a finite set, say $\Omega
=\{1,\ldots,p\}$, then the associated right hand sides $g_{i}=f(\cdot
,i),i\in\Omega$, generate a semigroup of continuous maps acting on $M$. The
metric and topological entropy theory of finitely generated semigroups acting
on compact metric spaces has recently found interest, see, e.g., Rodrigues and
Varandas \cite{RodrV16}.

The contents of this paper is as follows: In Section \ref{Section2}
definitions of metric invariance entropy are presented and discussed. In
particular, the behavior under measurable transformations is characterized and
it is shown that the topological invariance entropy is an upper bound for
metric invariance entropy. Section \ref{Section_Coder} relates the metric
invariance entropy to properties of coder-controllers rendering $Q$ invariant.
Section \ref{Section3} presents conditions ensuring that this entropy is
already determined on a subset $K$ which is invariant in $Q$, i.e., a set
which cannot be left by the system without leaving $Q$. In Section
\ref{Section4}, invariant $W$-control sets are introduced and their properties
are analyzed. The union of their closures yields a set $K$ satisfying the
conditions derived earlier guaranteeing that the metric invariance entropy of
$Q$ coincides with the metric invariance entropy of $K$. Examples
\ref{Example1} and \ref{Example2} illustrate some of the concepts in simple situations.

\textbf{Notation. }Given a probability measure $\mu$ we say that a property
holds for $\mu$-a.a. (almost all) points if it is valid outside a set of $\mu
$-measure zero.

\section{Definition of metric invariance entropy\label{Section2}}

In this section, we present definitions of metric invariance entropy and
discuss their motivation. First we recall entropy of dynamical systems which
also serves to introduce some notation.

Let $\mu$ be a probability measure on a space $X$ endowed with a $\sigma
$-algebra $\mathfrak{F}$. For every finite partition $\mathcal{P}%
=\{P_{1},\ldots,P_{n}\}$ of $X$ into measurable sets the entropy is defined as
$H_{\mu}(\mathcal{P})=-\sum_{i}\phi\left(  \mu(P_{i})\right)  $, where
$\phi(x)=x\log x,x\in(0,1]$ with $\phi(0)=0$. The entropy specifies the
expected information gained from the outcomes in $\mathcal{P}$ of an
experiment, or the amount of uncertainty removed upon learning the
$\mathcal{P}$-address of a randomly chosen point. For a dynamical system
generated by a continuous map $T$ on a compact metric space $X$ one considers
an invariant measure on the Borel $\sigma$-algebra $\mathcal{B}(X)$, i.e.,
$\mu(T^{-1}E)=\mu(E)$ for all $E\in\mathcal{B}(X)$. For a finite partition
$\mathcal{P}$ of $X$ and $j\in\mathbb{N}$ one finds with $T^{-j}%
\mathcal{P}:=\{T^{-j}P\left\vert P\in\mathcal{P}\right.  \}$ that%
\[
\mathcal{P}_{n}:=\bigvee\nolimits_{j=0}^{n-1}T^{-j}\mathcal{P=P}\vee
T^{-1}\mathcal{P}\vee\cdots\vee T^{-(n-1)}\mathcal{P}%
\]
again is a finite partition of $X$ (for two collections $\mathfrak{A}$ and
$\mathfrak{B}$ of sets the join is $\mathfrak{A}\vee\mathfrak{B}=\{A\cap
B\left\vert A\in\mathfrak{A}\text{ and }B\in\mathfrak{B}\right.  \}$). The
entropy of $T$ with respect to the partition $\mathcal{P}$ is $h_{\mu
}(T,\mathcal{P}):=\lim_{n\rightarrow\infty}\frac{1}{n}H_{\mu}\left(
\mathcal{P}_{n}\right)  $. Using conditional entropy, one can also write
\begin{equation}
H_{\mu}\left(  \mathcal{P}_{n}\right)  =\sum_{i=0}^{n-1}H_{\mu}\left(
\mathcal{P}_{i+1}\left\vert \mathcal{P}_{i}\right.  \right)  .
\label{entropy_dyn}%
\end{equation}
The metric entropy of $T$ is $h_{\mu}(T):=\sup_{\mathcal{P}}h_{\mu
}(T,\mathcal{P})$, where the supremum is taken over all finite partitions
$\mathcal{P}$ of $X$, i.e., it is the total information generated by the
dynamical system generated by $T$.

This concept has to be modified when we want to determine the minimal
information that is needed to make a subset $Q$ of the state space of a
control system (\ref{0.1}) invariant under feedbacks. We suppose that a closed
set $Q\subset M$ is given and fix a probability measure $\nu$ on the Borel
$\sigma$-algebra $\mathcal{B}(\Omega)$ of the control range $\Omega$. Let
$p(x,A)=\nu\left\{  \omega\in\Omega\left\vert f(x,\omega)\in A\right.
\right\}  ,x\in M,\,A\subset M$, be the associated Markov transition
probabilities. A quasi-stationary measure with respect to $Q$ of $M$ is a
probability measure $\eta$ on $\mathcal{B}(M)$ such that for some $\rho
\in(0,1]$%
\begin{equation}
\rho\eta(A)=\int_{Q}p(x,A)\eta(dx)\text{ for all }A\in\mathcal{B}(Q).
\label{quasi}%
\end{equation}
The measure $\eta$ is stationary if and only if $\rho=1$. With $A=Q$ one
obtains $\rho=\int_{Q}p(x,Q)\eta(dx)$ and the support $\mathrm{supp}\eta$ is
contained in $Q$. Results on the existence of quasi-stationary measures are
given, e.g., in Collett, Martinez and San Martin \cite[Proposition 2.10 and
Theorem 2.11]{ColMS13} and Colonius \cite[Theorem 2.9]{Colo16a}.

With the shift $\theta:\mathcal{U}\rightarrow\mathcal{U},(u_{k})_{k\geq
0}\mapsto(u_{k+1})_{k\geq0}$, control system (\ref{0.1}) can equivalently be
described by the continuous skew product map%
\begin{equation}
S:\mathcal{U}\times M\rightarrow\mathcal{U}\times M,(u,x)\mapsto(\theta
u,f(x,u_{0})), \label{T_random}%
\end{equation}
where $\mathcal{U}=\Omega^{\mathbb{N}}$ is endowed with the product topology.
Then $S^{k}(u,x)=(\theta^{k}u,\varphi(k,x,u))$. A conditionally invariant
measure $\mu$ for the map $S$ with respect to $Q\subset M$ is a probability
measure on the Borel $\sigma$-algebra of $\mathcal{U}\times M$ such that
$0<\rho:=\mu(S^{-1}(\mathcal{U}\times Q)\cap(\mathcal{U}\times Q))\leq1$ and%
\begin{equation}
\rho\mu(B)=\mu(S^{-1}B\cap(\mathcal{U}\times Q))\text{ for all }%
B\in\mathcal{B}(\mathcal{U}\times M)\text{.} \label{cond_inv}%
\end{equation}
We write $S_{Q}:=S_{\left\vert \mathcal{U}\times Q\right.  }:\mathcal{U}\times
Q\rightarrow\mathcal{U}\times M$ for the restriction. Then the condition in
(\ref{cond_inv}) can be written as $\rho\mu(B)=\mu(S_{Q}^{-1}B)$. For
$k\in\mathbb{N}$ the measure $\mu$ is conditionally invariant for $S_{Q}^{k}$
with constant $\rho^{k}$ and, in particular, $\rho^{-k}\mu$ is a probability
measure on $S_{Q}^{-k}(\mathcal{U}\times Q)$.

If $\eta$ is a quasi-stationary measure, one finds that with the product
measure $\nu^{\mathbb{N}}$ on $\mathcal{U}=\Omega^{\mathbb{N}}$ the measure
$\mu=\nu^{\mathbb{N}}\times\eta$ is a probability measure on the product space
$\mathcal{U}\times M$ satisfying (\ref{cond_inv}), cf. \cite[Proposition
2.8]{Colo16a}. In the present paper only conditionally invariant measures of
this form are considered and $\nu$ will be fixed (cf., e.g., Demers and Young
\cite{DemeY06}, Demers \cite{Demer15} for results on general conditionally
invariant measures). Often we will suppress the dependence on $\nu$ and only
indicate the dependence on the quasi-stationary measure $\eta$.

Next we construct certain partitions for subsets of $\mathcal{U}\times Q$
whose entropy with respect to $\mu=\nu^{\mathbb{N}}\times\eta$ will be used to
define metric invariance entropy.

\begin{definition}
\label{Definition_inv_part}For a closed subset $Q\subset M$ an invariant
$(Q,\eta)$-partition $\mathcal{C}_{\tau}=\mathcal{C}_{\tau}(\mathcal{P},F)$ is
given by $\tau\in\mathbb{N}$, a finite partition $\mathcal{P}$ of $Q$ into
Borel measurable sets and a map $F:\mathcal{P}\rightarrow\Omega^{\tau}$
assigning to each set $P$ in $\mathcal{P}$ a control function such that%
\begin{equation}
\varphi(k,x,F(P))\in Q\text{ for }k\in\{1,\ldots,\tau\}\text{ and }%
\eta\text{-a.a. }x\in P. \label{inv_part}%
\end{equation}

\end{definition}

When no misunderstanding can occur, we just talk about invariant
$Q$-partitions or just invariant partitions. Clearly, condition
(\ref{inv_part}) means that
\[
\eta\{x\in P\left\vert \varphi(k,x,F(P))\in Q\text{ for }k=1,\ldots
,\tau\right.  \}=\eta(P).
\]
Fix an invariant $(Q,\eta)$-partition $\mathcal{C}_{\tau}=\mathcal{C}_{\tau
}(\mathcal{P},F)$ with $\mathcal{P}=\{P_{1},\ldots,P_{q}\}$. Abbreviate
$F_{i}:=F(P_{i})\in\Omega^{\tau},i=1,\ldots,q$, and define for every word
$a:=[a_{0},a_{1},\ldots,a_{n-1}],\allowbreak n\in\mathbb{N}$, with $a_{j}%
\in\{1,\ldots,q\}$ a control function $u_{a}$ on $\{0,\ldots,n\tau-1\}$ by
applying these feedback maps one after the other: for $i=0,\ldots,n-1$ and
$k=0,\ldots,\tau-1$%
\begin{equation}
\left(  u_{a}\right)  _{i\tau+k}:=\left(  F_{a_{i}}\right)  _{k}\text{.}
\label{v_a}%
\end{equation}
We also write $u_{a}:=(F_{a_{0}},F_{a_{1},}\ldots,F_{a_{n-1}})$. A word $a$ is
called $(\eta,\mathcal{C}_{\tau})$-\textit{admissible} if
\begin{equation}
\eta\left\{  x\in Q\left\vert \varphi(i\tau,x,u_{a})\in P_{a_{i}}\text{ for
}i=0,1,\ldots,n-1\right.  \right\}  >0. \label{adm1}%
\end{equation}
Note that for $\eta$-a.a. $x$ it follows that $\varphi(k,x,u_{a})\in Q$ for
$k=0,\ldots,n\tau$ if $\varphi(i\tau,x,u_{a})\in P_{a_{i}}$ for $i=0,\ldots
,n-1$. If $\eta$ and $\mathcal{C}_{\tau}$ are clear from the context, we just
say that $a$ is admissible. The admissible words describe the sequences of
partition elements under\ the feedbacks associated with $\mathcal{C}_{\tau}$
which are followed with positive probability.

For $P\in\mathcal{P}$ we define%
\begin{equation}
A(P,\eta)=\left\{  u\in\mathcal{U}\left\vert \varphi(k,x,u)\in Q\text{ for
}k=1,\ldots,\tau\text{ and }\eta\text{-a.a. }x\in P\right.  \right\}  \times P
\label{A_P}%
\end{equation}
and%
\[
\mathfrak{A}(\mathcal{C}_{\tau},\eta)=\{A(P,\eta)\left\vert P\in
\mathcal{P}\right.  \}\text{ with union }\mathcal{A}(\mathcal{C}_{\tau}%
,\eta)=\bigcup_{P\in\mathcal{P}}A(P,\eta).
\]
Here and in the following the dependence on $\mathcal{C}_{\tau}$ (actually,
these sets only depend on $(\mathcal{P},\tau)$) or $\eta$ is omitted, if it is
clear from the context. The controls $u$ in (\ref{A_P}) can be considered as
constant parts of feedbacks keeping $\eta$-almost all $x\in P$ in $Q$ up to
time $\tau$.

\begin{lemma}
The sets $A(P,\eta)$ defined in (\ref{A_P}) are Borel measurable, hence
$\mathfrak{A}$ is a measurable partition of $\mathcal{A}$ which, in general,
is a proper subset of $\mathcal{U}\times Q$.
\end{lemma}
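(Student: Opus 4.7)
The plan is to reduce the claim to the Borel measurability of the $u$-fiber
$$
G(P):=\bigl\{u\in\mathcal{U}\,\bigl|\,\varphi(k,x,u)\in Q\text{ for }k=1,\ldots,\tau\text{ and }\eta\text{-a.a.~}x\in P\bigr\},
$$
so that $A(P,\eta)=G(P)\times P$ is the product of a Borel set in $\mathcal{U}$ and a Borel set in $Q$, hence Borel in $\mathcal{U}\times Q$.

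The first step is to observe that for each fixed $k\in\mathbb{N}$ the map $(u,x)\mapsto\varphi(k,x,u)$ depends only on $x$ and on the first $k$ coordinates of $u$, and so is continuous on $\mathcal{U}\times M$ (with the product topology on $\mathcal{U}=\Omega^{\mathbb{N}}$) because $f$ is continuous. Since $Q$ is closed, the preimage
$$
E_k:=\{(u,x)\in\mathcal{U}\times M\,|\,\varphi(k,x,u)\in Q\}
$$
is closed, and therefore Borel. Intersecting over $k=1,\ldots,\tau$ and restricting to $\mathcal{U}\times P$ yields a Borel set $E_P:=\bigl(\bigcap_{k=1}^{\tau}E_k\bigr)\cap(\mathcal{U}\times P)$ in $\mathcal{U}\times M$.

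The second step is to pass from joint measurability to measurability of the $u$-section measure. By Fubini's theorem applied to the probability space $(\mathcal{U}\times M,\nu^{\mathbb{N}}\times\eta)$, the function $u\mapsto\eta(E_P^u)$, where $E_P^u=\{x\in P\,|\,(u,x)\in E_P\}$, is Borel measurable. Since $G(P)$ is precisely the level set $\{u\in\mathcal{U}\,|\,\eta(E_P^u)=\eta(P)\}$, it is Borel. Consequently $A(P,\eta)$ is Borel in $\mathcal{U}\times Q$.

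For the partition statement, observe that $A(P,\eta)\subseteq\mathcal{U}\times P$ and the elements of $\mathcal{P}$ are pairwise disjoint, so the sets $A(P,\eta)$ are pairwise disjoint; their union is $\mathcal{A}(\mathcal{C}_\tau,\eta)$ by definition, so $\mathfrak{A}$ is indeed a measurable partition of $\mathcal{A}$. That $\mathcal{A}$ can be a proper subset of $\mathcal{U}\times Q$ is immediate: any pair $(u,x)\in\mathcal{U}\times P$ for which the control $u$ drives a positive $\eta$-fraction of $P$ out of $Q$ within $\tau$ steps lies outside $A(P,\eta)$, and for a generic control range $\Omega$ such $(u,x)$ exist. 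The one point requiring care is the appeal to Fubini in the second step, which implicitly uses that $\mathcal{U}$ and $M$ are separable metric spaces, ensuring joint measurability of Borel rectangles and the standard section theorem; this is the only potential subtlety, but it is unproblematic under the paper's standing hypotheses.
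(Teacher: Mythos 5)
Your proof is correct, but it takes a genuinely different route from the paper's. The paper's argument (written out only for $\tau=1$) is hands-on: it shows that for compact $K\subset P$ and $\delta>0$ the sets $\{u\in\Omega\,:\,\eta(f(\cdot,u)^{-1}Q\cap K)\geq\eta(P)-\delta\}$ are closed — via a limsup argument exploiting closedness of $\{(x,\omega):f(x,\omega)\in Q\}$ — and then uses inner regularity of $\eta$ by compact sets to exhaust $P$ by $K_n$ and write the $u$-fiber as a countable intersection of closed sets. Your argument instead invokes the measurability half of Tonelli-Fubini: $(u,x)\mapsto\varphi(k,x,u)$ is continuous, so $E_P$ is Borel, and hence $u\mapsto\eta(E_P^u)$ is measurable, with $G(P)$ recovered as a level set. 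Your route is more structural, dispenses with the regularity/compactness machinery, and handles general $\tau$ in one stroke rather than by the paper's implicit ``the general case is similar'' reduction. The trade-off is exactly the one you flag: Tonelli's section measurability requires $E_P$ to lie in the product $\sigma$-algebra $\mathcal{B}(\mathcal{U})\otimes\mathcal{B}(M)$, not merely in $\mathcal{B}(\mathcal{U}\times M)$, and these coincide only when both factors are second countable. Since $\mathcal{U}=\Omega^{\mathbb{N}}$ is a compact metric space it is second countable, but the paper nominally assumes only that $M$ is a metric space; however, its own appeal to inner regularity of $\eta$ (Viana--Oliveira, Prop.\ A.3.2) already presupposes enough separability/Polishness of $M$, so you are not using anything the paper doesn't also need. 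Your closing observations on disjointness of the $A(P,\eta)$ and on properness of $\mathcal{A}$ are routine and fine.
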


\begin{proof}
Clearly the sets $A(P,\eta)$ are pairwise disjoint, hence it only remains to
show measurability. We only prove this for the case $\tau=1$, where it
suffices to show that
\[
\left\{  u\in\Omega\left\vert f(x,u)\in Q\text{ for }\eta\text{-a.a. }x\in
P\right.  \right\}  =\left\{  u\in\Omega\left\vert \eta(f(\cdot,u)^{-1}Q\cap
P)\geq\eta(P)\right.  \right\}
\]
is measurable. First we claim that for compact $K\subset P$ and $\delta>0$ the
set%
\begin{equation}
\left\{  u\in\Omega\left\vert \eta(f(\cdot,u)^{-1}Q\cap K)\geq\eta
(P)-\delta\right.  \right\}  \label{Add_1}%
\end{equation}
is closed. In fact, if this set is nonvoid, let $u_{n}\in\Omega,u_{n}%
\rightarrow u\in\Omega$ with
\[
\eta(f(\cdot,u_{n})^{-1}Q\cap K)\geq\eta(P)-\delta.
\]
The sets $B_{m}$ defined by%
\[
B_{m}:=\bigcup_{i=m}^{\infty}f(\cdot,u_{i})^{-1}Q\cap K,B:=\bigcap
_{m=1}^{\infty}B_{m},
\]
are decreasing and $\eta(B)=\lim_{m\rightarrow\infty}\eta(B_{m})\geq
\eta(P)-\delta$. Furthermore, suppose that a subsequence of $y_{i}\in
f(\cdot,u_{i})^{-1}Q\cap K,i\in\mathbb{N}$, converges to $y$. Since
$f^{-1}(Q,\Omega)$ is closed, it follows that $y\in f(\cdot,u)^{-1}Q\cap K$.
This shows that $B\subset f(\cdot,u)^{-1}Q\cap K$ and hence closedness of the
set in (\ref{Add_1}) follows from
\[
\eta(f(\cdot,u)^{-1}Q\cap K)\geq\eta(B)\geq\eta(P)-\delta.
\]
Since $\Omega$ is a compact metric space, regularity of the probability
measure $\eta$ implies that there are compact sets $K_{n}\subset P$ with
$\eta(P\setminus K_{n})\leq\frac{1}{n}$ (cf. Viana and Oliveira
\cite[Proposition A.3.2]{VianO16}). Hence%
\[
\left\{  u\in\Omega\left\vert \eta(f(\cdot,u)^{-1}Q\cap P)\geq\eta(P)\right.
\right\}  \subset\left\{  u\in\Omega\left\vert \eta(f(\cdot,u)^{-1}Q\cap
K_{n})\geq\eta(P)-1/n\right.  \right\}  ,
\]
and it follows that
\[
\left\{  u\in\Omega\left\vert \eta(f(\cdot,u)^{-1}Q\cap P)\geq\eta(P)\right.
\right\}  =\bigcap_{n=1}^{\infty}\left\{  u\in\Omega\left\vert \eta
(f(\cdot,u)^{-1}Q\cap K_{n})\geq\eta(P)-\frac{1}{n}\right.  \right\}  .
\]
Thus the set on the left hand side is measurable as countable intersection of
closed sets.
\end{proof}

A sequence $(A_{0},\ldots,A_{n-1})$ of sets in $\mathfrak{A}$ is called
$\mathcal{C}_{\tau}$-admissible (or a $\mathcal{C}_{\tau}$-itinerary), if
there is an admissible word $a=[a_{0},\ldots,a_{n-1}]$ of length $n$ with
$A_{i}=A(P_{a_{i}})\in\mathfrak{A}$ for all $i$. Then also the set%
\begin{equation}
D_{a}=A_{0}\cap S^{-\tau}A_{1}\cap\cdots\cap S^{-(n-1)\tau}A_{n-1}\in
\bigvee_{i=0}^{n-1}S_{Q}^{-i\tau}\mathfrak{A} \label{D_a}%
\end{equation}
is called admissible. Only the sets $D_{a}$ with $\mu(D_{a})>0$ will be
relevant (as usual, if $\mu(D_{a})=0$, this set is simply omitted in the following).

Note that $\mu$-a.a. $(u,x)\in D_{a}$ satisfy $\varphi(k,x,u)\in Q$ for
$k=0,\ldots,n\tau$. The collection of all sets $D_{a}$ is%
\begin{equation}
\mathfrak{A}_{n}:=\left\{  D_{a}\in\bigvee_{i=0}^{n-1}S^{-i\tau}%
\mathfrak{A}\left\vert a\text{ admissible}\right.  \right\}  ,~\mathcal{A}%
_{n}:=\bigcup_{a\text{ admissible}}D_{a}\subset S_{Q}^{-(n-1)\tau}%
(\mathcal{U}\times Q). \label{A_n}%
\end{equation}
Observe that $\mathfrak{A}_{1}=\mathfrak{A}$ and that $\mathfrak{A}_{n}$ is a
measurable partition of $\mathcal{A}_{n}$ and, for convenience, we set
$\mathfrak{A}_{0}=\mathcal{U}\times Q$. Note that the inclusion $\mathcal{A}%
_{n+1}\subset\mathcal{A}_{n}$ holds for all $n\in\mathbb{N}$, and that, in
general, it is proper.

\begin{remark}
For invariant $(Q,\eta)$-partitions the inclusion%
\begin{equation}
\mathfrak{A}_{n+m}\subset\mathfrak{A}_{n}\vee S^{-n\tau}\mathfrak{A}%
_{m},n,m\in\mathbb{N}, \label{incl}%
\end{equation}
does not hold, in general (in contrast to Colonius \cite[Lemma 3.3]{Colo16a}
where only existence of a trajectory following a sequence of partition
elements is required). The problem is that for an $(\eta,\mathcal{C}_{\tau}%
)$-admissible word $a=[a_{0},\ldots,a_{n-1},a_{n},\ldots,a_{n+m-1}]$ the word
$[a_{n},\ldots,a_{n+m-1}]$ need not be $(\eta,\mathcal{C}_{\tau})$-admissible:
Certainly, the inequality%
\begin{align*}
&  \eta\left\{  y\in Q\left\vert \varphi(i\tau,y,u_{[a_{n},,\ldots,n+m-1]})\in
P_{a_{n+i}}\text{ for }i=0,\ldots,m-1\right.  \right\} \\
&  \geq\eta\left\{  \varphi(n\tau,x,u_{a})\in Q\left\vert x\in Q,\varphi
(i\tau,x,u_{a})\in P_{a_{i}}\text{ for }i=0,\ldots,n+m-1\right.  \right\}
\end{align*}
holds, but the term on the right hand side need not be positive. This could be
guaranteed by changing the definition of $(\eta,\mathcal{C}_{\tau}%
)$-admissible words $a=[a_{0},\ldots,a_{n-1}]$ to: For all $j=0,1,\ldots,n-1$%
\[
\eta\left\{  y\in Q\left\vert \varphi(i\tau,y,u_{a})\in P_{a_{i}}\text{ for
}i=j,j+1,\ldots,n-1\right.  \right\}  >0.
\]
We do not adapt this definition since the inclusion (\ref{incl}) is not needed below.
\end{remark}

The direct way to define a notion of metric invariance entropy is to consider
the entropy of the partitions $\mathfrak{A}_{n}$ of the sets $\mathcal{A}_{n}%
$. An alternative is to consider the additional information in every step. We
start with the first choice.

We consider the entropy $H_{\rho^{-(n-1)\tau}\mu}(\mathfrak{A}_{n}%
(\mathcal{C}_{\tau}))$ of $\mathfrak{A}_{n}(\mathcal{C}_{\tau})$ in
$\mathcal{A}_{n}(\mathcal{C}_{\tau})\subset S_{Q}^{-(n-1)\tau}(\mathcal{U}%
\times Q)$ with respect to the probability measure $\rho^{-(n-1)\tau}\mu$ and
then take the average of the required information as time tends to $\infty$ to
get the invariance $\mu$-entropy of $\mathcal{C}_{\tau}$,%
\[
h_{\mu}(\mathcal{C}_{\tau},Q):=\underset{n\rightarrow\infty}{\lim\sup}\frac
{1}{n\tau}H_{\rho^{-(n-1)\tau}\mu}(\mathfrak{A}_{n}(\mathcal{C}_{\tau})).
\]

\begin{definition}
\label{Definition_main}Let $\eta$ be a quasi-stationary measure on a closed
set $Q$ for a measure $\nu$ on $\Omega$ and set $\mu=\nu^{\mathbb{N}}%
\times\eta$. The invariance entropy for control system (\ref{0.1}) is%
\begin{equation}
h_{\mu}(Q):=\underset{\tau\rightarrow\infty}{\lim\sup}\inf_{\mathcal{C}_{\tau
}}h_{\mu}(\mathcal{C}_{\tau},Q), \label{Main_entropy}%
\end{equation}
where for fixed $\tau\in\mathbb{N}$ the infimum is taken over all invariant
$(Q,\eta)$-partitions $\mathcal{C}_{\tau}=\mathcal{C}_{\tau}(\mathcal{P},F)$.
If no invariant $(Q,\eta)$-partition $\mathcal{C}_{\tau}$ exists, we set
$h_{\mu}(Q):=\infty$.
\end{definition}

The following remarks comment on this definition.

\begin{remark}
\label{Remark_partition}An objection to the consideration of $H_{\rho
^{-(n-1)\tau}\mu}(\mathfrak{A}_{n})$ might be that $\mathfrak{A}_{n}$ is not a
partition of $S_{Q}^{-(n-1)\tau}(\mathcal{U}\times Q)$, while $\rho
^{-(n-1)\tau}\mu$ is a probability measure on this space. However, one may add
to the collection $\mathfrak{A}_{n}(\mathcal{C}_{\tau})$ the complement
\[
Z_{n}:=\left(  S_{Q}^{-(n-1)\tau}(\mathcal{U}\times Q)\right)  \setminus
\mathcal{A}_{n}(\mathcal{C}_{\tau}).
\]
Thus one obtains a partition $\mathfrak{A}_{n}\cup\{Z_{n}\}$ of $S_{Q}%
^{-(n-1)\tau}(\mathcal{U}\times Q)$ with entropy%
\[
H_{\rho^{-(n-1)\tau}\mu}\left(  \mathfrak{A}_{n}(\mathcal{C}_{\tau}%
)\cup\{Z_{n}\}\right)  =H_{\rho^{-(n-1)\tau}\mu}(\mathfrak{A}_{n}%
(\mathcal{C}_{\tau}))-\phi\left(  \rho^{-(n-1)\tau}\mu(Z_{n})\right)  .
\]
For each $n\in\mathbb{N}$ the second summand is bounded by $1/e=\max
_{x\in\lbrack0,1]}(-\phi(x))$ and hence%
\[
h_{\mu}(\mathcal{C}_{\tau})=\underset{n\rightarrow\infty}{\lim\sup}\frac
{1}{n\tau}H_{\rho^{-(n-1)\tau}\mu}(\mathfrak{A}_{n}(\mathcal{C}_{\tau
}))=\underset{n\rightarrow\infty}{\lim\sup}\frac{1}{n\tau}H_{\rho^{-(n-1)\tau
}\mu}\left(  \mathfrak{A}_{n}(\mathcal{C}_{\tau})\cup\{Z_{n}\}\right)  .
\]
This shows that $h_{\mu}(\mathcal{C}_{\tau})$ is given by the entropy of bona
fide partitions.
\end{remark}

\begin{remark}
Definition (\ref{Main_entropy}) ensures that $\tau\rightarrow\infty$. This
will be needed in the proof of Theorem \ref{TheoremA_alt}(ii). Instead of the
limit superior for $n\rightarrow\infty$ and $\tau\rightarrow\infty$ one also
might consider the limit inferior. However, the limit superior is advantageous
in Theorem \ref{TheoremA_alt}(iii). For topological invariance entropy, one
takes instead an infimum over all invariant open covers (where the partition
$\mathcal{P}$ is replaced by an open cover of $Q$). Then it follows that it
suffices to take the limit for $\tau\rightarrow\infty$, cf. Kawan
\cite[Theorem 2.3 and its proof]{Kawa13}.
\end{remark}

\begin{remark}
\label{Remark_null}If the sets in invariant $(Q,\eta)$-partitions
$\mathcal{C}_{\tau}$ and $\mathcal{C}_{\tau}^{\prime}$ coincide modulo $\eta
$-null sets, the entropies $H_{\rho^{-(n-1)\tau}\mu}(\mathfrak{A}%
_{n}(\mathcal{C}_{\tau}))$ and $H_{\rho^{-(n-1)\tau}\mu}(\mathfrak{A}%
_{n}(\mathcal{C}_{\tau}^{\prime})),n\in\mathbb{N}$, coincide. Hence it
suffices to specify a partition of $Q$ outside of a set of $\eta$-measure zero.
\end{remark}

\begin{remark}
For a stationary measure $\eta$ the trivial partition of $Q$ yields an
invariant partition. In fact, for every $\tau>0$ there is an invariant
$(Q,\eta)$-partition $\mathcal{C}_{\tau}=\mathcal{C}_{\tau}(\{Q\},F)$, where
the control $F(Q)\in\mathcal{U}$ can be chosen arbitrarily in a set of full
$\nu^{\mathbb{N}}$-measure in $\mathcal{U}$. Thus the associated metric
invariance entropy vanishes. This is seen as follows: Assume, contrary to the
assertion, that there is a set $\ \mathcal{U}_{0}\subset\mathcal{U}$ with
$\nu^{\mathbb{N}}(\mathcal{U}_{0})>0$ such that for every $u\in\mathcal{U}%
_{0}$%
\begin{equation}
\eta\{x\in Q\left\vert \varphi(k,x,u)\in Q\text{ for }k=1,\ldots,\tau\right.
\}<\eta(Q)=1. \label{a1}%
\end{equation}
We may assume that there is $k\in\{1,\ldots,\tau\}$ such that for every
$u\in\mathcal{U}_{0}$%
\[
\eta\{x\in Q\left\vert \varphi(k,x,u)\in Q\right.  \}<1.
\]
By invariance of $\nu^{\mathbb{N}}\times\eta$ Fubini's theorem yields the
contradiction%
\begin{align*}
1  &  =\int_{\mathcal{U}\times Q}S_{Q}^{-k}(\mathcal{U}\times Q)(\nu
^{\mathbb{N}}\times\eta)(du,dx)=\int_{\mathcal{U}}\int_{Q}\chi
_{\{(u,x)\left\vert \varphi(k,x,u)\in Q\right.  \}}\eta(dx)\nu^{\mathbb{N}%
}(du)\\
&  <\int_{\mathcal{U}_{0}}\eta(Q)\nu^{\mathbb{N}}(du)+\int_{\mathcal{U}%
\setminus\mathcal{U}_{0}}\eta(Q)\nu^{\mathbb{N}}(du)=\int_{\mathcal{U}}%
\eta(Q)\nu^{\mathbb{N}}(du)=1.
\end{align*}

\end{remark}

\begin{remark}
If there exists an invariant partition, then invariant partitions with
arbitrarily large time step $\tau$ exist. It suffices to see that for every
invariant partition $\mathcal{C}_{\tau}=\mathcal{C}_{\tau}(\mathcal{P},F)$
there exists an invariant partition $\mathcal{C}_{2\tau}=\mathcal{C}_{2\tau
}(\mathcal{P}^{2},F^{2})$. In fact, for $P_{i},P_{j}\in\mathcal{P}$ let%
\[
P_{ij}:=\{x\in P_{i}\left\vert \varphi(\tau,x,F(P_{i}))\in P_{j}\right.
\}=P_{i}\cap\varphi(\tau,\cdot,F(P_{i}))^{-1}P_{j}\}.
\]
This yields a partition of $Q$ given by $\mathcal{P}^{2}:=\{P_{ij}\left\vert
P_{i},P_{j}\in\mathcal{P}\right.  \}$. Define feedbacks $F^{2}:\mathcal{P}%
^{2}\rightarrow\Omega^{2\tau}$ by%
\[
F^{2}(P_{ij})(r):=\left\{
\begin{array}
[c]{ccc}%
F(P_{i})(r) & \text{for} & r=0,\ldots,\tau-1\\
F(P_{j})(r-\tau) & \text{for} & \text{ }r=\tau,\ldots,2\tau-1
\end{array}
\right.  .
\]
Then $\mathcal{C}_{2\tau}=(\mathcal{P}^{2},F^{2})$ is an invariant partition.
\end{remark}

An alternative concept of metric invariance entropy can be based on the
additional information gained in every time step (this was proposed in
Colonius \cite{Colo16a} and is slightly reformulated below). The following
construction has to take into account that the space $\mathcal{A}_{n}$ that is
partitioned decreases in every time step.

Let an invariant partition $\mathcal{C}_{\tau}$ be given. Then the partition
$\mathfrak{A}_{n}=\mathfrak{A}_{n}(\mathcal{C}_{\tau})$ of $\mathcal{A}_{n}$
induces a partition of $\mathcal{A}_{n+1}$: For $D\in\mathfrak{A}_{n}$ let%
\begin{equation}
\mathfrak{A}_{n+1}(D)=\{E\in\mathfrak{A}_{n+1}\left\vert E\cap D\not =%
\emptyset\right.  \},\mathcal{A}_{n+1}(D)=\bigcup\nolimits_{E\in
\mathfrak{A}_{n+1}(D)}E. \label{A(D)}%
\end{equation}
Since $E\cap D\not =\emptyset$ implies $E\subset D$ and the sets
$\mathcal{A}_{n+1}(D)$ are mutually disjoint, one obtains an induced partition%
\begin{equation}
\mathfrak{A}_{n}^{n+1}:=\left\{  \mathcal{A}_{n+1}(D)\left\vert D\in
\mathfrak{A}_{n}\right.  \right\}  \text{ of }\mathcal{A}_{n+1}=\bigcup
\nolimits_{D\in\mathfrak{A}_{n}}\mathcal{A}_{n+1}(D). \label{induced}%
\end{equation}
Clearly, $\mathfrak{A}_{n+1}$ is a refinement of $\mathfrak{A}_{n}^{n+1}$. The
information from $\mathfrak{A}_{n}$ that is relevant for $\mathfrak{A}_{n+1}$
comes from the partition $\mathfrak{A}_{n}^{n+1}$. Assuming that the
information encoded in $\mathfrak{A}_{n}^{n+1}$ is known at the time step $n$,
the incremental information is the conditional entropy of $\mathfrak{A}_{n+1}$
given $\mathfrak{A}_{n}^{n+1}$ (with respect to $\rho^{-n\tau}\mu$). For every
$n\in\mathbb{N}$%
\[
H_{\rho^{-n\tau}\mu}(\mathfrak{A}_{n+1})=H_{\rho^{-n\tau}\mu}(\mathfrak{A}%
_{n}^{n+1})+H_{\rho^{-n\tau}\mu}(\mathfrak{A}_{n+1}\left\vert \mathfrak{A}%
_{n}^{n+1}\right.  ),
\]
where the conditional entropy of $\mathfrak{A}_{n+1}$ given $\mathfrak{A}%
_{n}^{n+1}$ is%
\begin{equation}
H_{\rho^{-n\tau}\mu}(\mathfrak{A}_{n+1}\left\vert \mathfrak{A}_{n}%
^{n+1}\right.  )=-\sum_{D\in\mathfrak{A}_{n}}\rho^{-n\tau}\mu(\mathcal{A}%
_{n+1}(D))\sum_{E\in\mathfrak{A}_{n+1}}\phi\left(  \frac{\mu(D\cap E)}%
{\mu(\mathcal{A}_{n+1}(D))}\right)  . \label{condition_fb0}%
\end{equation}
(Observe that in the argument of $\phi$ one may multiply numerator and
denominator by $\rho^{-n\tau}$.) Taking the average incremental information
one arrives at the following notion.

\begin{definition}
\label{Def_cond}Let $\eta$ be a quasi-stationary measure on a closed set $Q$
for a measure $\nu$ on $\Omega$ and set $\mu=\nu^{\mathbb{N}}\times\eta$. For
an invariant $(Q,\eta)$-partition $\mathcal{C}_{\tau}=\mathcal{C}_{\tau
}(\mathcal{P},F)$ define the incremental invariance entropy of $\mathcal{C}%
_{\tau}$ by%
\[
h_{\mu}^{inc}(\mathcal{C}_{\tau},Q)=\underset{n\rightarrow\infty}{\lim\sup
}\frac{1}{n\tau}\sum_{j=0}^{n-1}H_{\rho^{-j\tau}\mu}\left(  \mathfrak{A}%
_{j+1}(\mathcal{C}_{\tau})\left\vert \mathfrak{A}_{j}^{j+1}(\mathcal{C}_{\tau
})\right.  \right)  ,
\]
and define the incremental invariance entropy for control system (\ref{0.1})
by%
\begin{equation}
h_{\mu}^{inc}(Q):=\underset{\tau\rightarrow\infty}{\lim\sup}\inf
_{\mathcal{C}_{\tau}}h_{\mu}^{inc}(\mathcal{C}_{\tau},Q),
\label{Main_entropy_inc}%
\end{equation}
where the infimum is taken over all invariant $(Q,\eta)$-partitions
$\mathcal{C}_{\tau}(\mathcal{P},F)$. If no invariant $(Q,\eta)$-partition
$\mathcal{C}_{\tau}$ exists, we set $h_{\mu}^{inc}(Q):=\infty$.
\end{definition}

Regrettably, a formula analogous to (\ref{entropy_dyn}) for dynamical systems
is not available for invariance entropy of control systems. Hence the relation
between the invariance entropy and the incremental invariance entropy remains
unknown. The following proposition only describes a relation between the
entropy of $\mathfrak{A}_{n}$ and of the induced partition $\mathfrak{A}%
_{n}^{n+1}$.

\begin{proposition}
There is $K\in\mathbb{N}$ such that for all $n\in\mathbb{N}$%
\[
H_{\rho^{-n\tau}\mu}(\mathfrak{A}_{n}^{n+1})\leq H_{\rho^{-n\tau}%
}(\mathfrak{A}_{n})+K/e.
\]

\end{proposition}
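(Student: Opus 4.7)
The plan is to express the entropy difference as a sum indexed by atoms of $\mathfrak{A}_n$ and to bound each summand separately. By the remark following (\ref{A(D)}), any $E\in\mathfrak{A}_{n+1}$ that meets $D\in\mathfrak{A}_n$ lies inside $D$, so $\mathcal{A}_{n+1}(D)=D\cap\mathcal{A}_{n+1}$ and the atoms of $\mathfrak{A}_n^{n+1}$ are naturally indexed by $\mathfrak{A}_n$ (empty atoms being harmless since $\phi(0)=0$). Setting
\[
a_D:=\rho^{-n\tau}\mu(D),\qquad b_D:=\rho^{-n\tau}\mu\bigl(\mathcal{A}_{n+1}(D)\bigr),
\]
one has $0\le b_D\le a_D\le 1$, and, using the observation after (\ref{D_a}) that $\mathcal{A}_n\subset S_Q^{-n\tau}(\mathcal{U}\times Q)$ modulo $\mu$-null sets together with the fact that $\rho^{-n\tau}\mu$ is a probability measure on $S_Q^{-n\tau}(\mathcal{U}\times Q)$,
\[
\sum_{D\in\mathfrak{A}_n}a_D=\rho^{-n\tau}\mu(\mathcal{A}_n)\le 1.
\]
The entropy difference then takes the clean form
\[
H_{\rho^{-n\tau}\mu}(\mathfrak{A}_n^{n+1})-H_{\rho^{-n\tau}\mu}(\mathfrak{A}_n)=\sum_{D\in\mathfrak{A}_n}\bigl[\phi(a_D)-\phi(b_D)\bigr].
\]

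Next I would exploit the shape of $\phi$ on $[0,1]$: it is strictly decreasing on $[0,1/e]$, strictly increasing on $[1/e,1]$, and satisfies $-1/e\le\phi\le 0$. For $D$ with $a_D\le 1/e$, monotonicity on $[0,1/e]$ combined with $b_D\le a_D$ yields $\phi(b_D)\ge\phi(a_D)$, so that summand is nonpositive. For $D$ with $a_D>1/e$, the pointwise bounds $\phi(a_D)\le 0$ and $\phi(b_D)\ge -1/e$ immediately give a summand at most $1/e$. Finally, $\sum_D a_D\le 1$ forces the set $\{D:a_D>1/e\}$ to have cardinality at most $\lfloor e\rfloor=2$, so the whole sum is bounded by $2/e$ and the proposition holds with $K=2$.

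No serious obstacle arises; the argument reduces to extremal properties of $\phi$ together with the observation that $\mathcal{A}_n$ lies in a $\rho^{-n\tau}\mu$-probability space (the one mildly delicate input). I read the undecorated $\rho^{-n\tau}$ on the right-hand side of the stated inequality as a typographical slip for $\rho^{-n\tau}\mu$, consistent with usage throughout the section.
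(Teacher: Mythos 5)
Your argument is correct and follows the same skeleton as the paper's proof: partition the atoms of $\mathfrak{A}_n$ into those with $\rho^{-n\tau}\mu(D)\le 1/e$, where monotonicity of $\phi$ on $[0,1/e]$ together with $\mathcal{A}_{n+1}(D)\subset D$ makes the corresponding summand nonpositive, and those with $\rho^{-n\tau}\mu(D)>1/e$, where the universal bound $-1/e\le\phi\le 0$ gives at most $1/e$ per atom; finally bound the number of the latter uniformly in $n$. You also correctly identify the undecorated $\rho^{-n\tau}$ in the statement as shorthand for $\rho^{-n\tau}\mu$.

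The one place where you genuinely diverge from the paper, and improve on it, is the counting of ``big'' atoms. The paper only invokes the containment $\mathcal{A}_n\subset S_Q^{-(n-1)\tau}(\mathcal{U}\times Q)$ from (\ref{A_n}), for which $\rho^{-(n-1)\tau}\mu$ is a probability; it therefore counts atoms with $\rho^{-(n-1)\tau}\mu(D)>\rho^\tau/e$, equivalently $\rho^{-n\tau}\mu(D)>1/e$, and gets a bound $K$ on their number that grows like $e\rho^{-\tau}$, hence depends on $\tau$ and $\rho$ (which is harmless for the proposition, since $\tau$ is fixed). You instead use the observation recorded just after (\ref{D_a}), that $\mu$-a.a.\ $(u,x)\in D_a$ have $\varphi(k,x,u)\in Q$ for all $k\le n\tau$, so $\mathcal{A}_n\subset S_Q^{-n\tau}(\mathcal{U}\times Q)$ modulo $\mu$-null sets and $\rho^{-n\tau}\mu(\mathcal{A}_n)\le 1$. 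This gives $\sum_D a_D\le 1$ directly, hence at most two big atoms and the $\tau$-uniform constant $K=2$. Both arguments establish the proposition; yours yields the cleaner, uniform bound at the cost of relying on the (already stated but easy to overlook) null-set refinement of (\ref{A_n}).
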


\begin{proof}
There is $K\in\mathbb{N}$ such that for every probability measure $m$ there
are at most $K$ mutually disjoint sets $A_{1},\ldots,A_{K}$ with
$m(A_{i})>\rho^{\tau}/e$, since $\sum_{i=1}^{K}m(A_{i})\leq1$. In particular,
for every $n\in\mathbb{N}$ there are at most $K$ sets $D\in\mathfrak{A}_{n}$
such that $\rho^{-(n-1)\tau}\mu(D)>\rho^{\tau}/e$, since every $D\in
\mathfrak{A}_{n}$ is contained in $S_{Q}^{-(n-1)\tau}(\mathcal{U}\times Q)$
and $\rho^{-(n-1)\tau}\mu$ is a probability measure on this set. This
inequality is equivalent to $\rho^{-n\tau}\mu(D)>1/e$. Let $\mathfrak{A}%
_{n}^{big}$ be the set of elements in $\mathfrak{A}_{n}$ with $\rho^{-n\tau
}\mu(D)>1/e$. The other elements $D$ in $\mathfrak{A}_{n}$ satisfy%
\[
\rho^{-n\tau}\mu(\mathcal{A}_{n+1}(D))\leq\rho^{-n\tau}\mu(D)\leq1/e.
\]
Since $\phi$ is monotonically decreasing on $[0,1/e]$ it follows that
$\phi(\rho^{-n\tau}\mu(\mathcal{A}_{n+1}(D)))\geq\phi(\rho^{-n\tau}\mu(D)) $
and hence%
\[
H_{\rho^{-n\tau}\mu}(\mathfrak{A}_{n}^{n+1})\leq H_{\rho^{-n\tau}%
}(\mathfrak{A}_{n})-K\min\phi\leq H_{\rho^{-n\tau}}(\mathfrak{A}_{n})+K/e.
\]

\end{proof}

Next we analyze the behavior of both notions of invariance entropy under
measure preserving transformations (cf. Walters \cite[\S 2.3]{Walt82}). For
notational simplicity, we suppose that the control ranges and the measures on
them coincide.

\begin{definition}
Consider two control systems of the form (\ref{0.1}) on $M_{1}$ and $M_{2}$,
respectively, given by%
\begin{equation}
x_{k+1}=f_{1}(x_{k},u_{k})\text{ and }y_{k+1}=f_{2}(y_{k},u_{k})\text{ with
}(u_{k})\in\mathcal{U}=\Omega^{\mathbb{N}}. \label{factor}%
\end{equation}
Let $\nu$ be a probability measure on $\Omega$ and suppose that $\eta_{1}$ and
$\eta_{2}$ are corresponding quasi-stationary measures with respect to closed
subsets $Q_{1}\subset M_{1}$ and $Q_{2}\subset M_{2}$, respectively. We say
that $(f_{2},\eta_{2})$ is semi-conjugate to $(f_{1},\eta_{1})$, if there are
subsets $\hat{\Omega}\subset\Omega$ and $\hat{Q}_{i}\subset Q_{i} $ of full
$\nu$-measure and full $\eta_{i}$-measure, $i=1,2$, resp., with the following properties:

(i) there exists a measurable map $\pi:\hat{Q}_{1}\rightarrow\hat{Q}_{2}$ such
that $\pi$ maps $\eta_{1}$ onto $\eta_{2}$, i.e.,%
\begin{equation}
(\pi_{\ast}\eta_{1})(B):=\eta_{1}(\pi^{-1}B)=\eta_{2}(B)\text{ for all
}B\subset\hat{Q}_{2}, \label{conjugacy2}%
\end{equation}

(ii) one has $f_{i}(x,\omega)\in\hat{Q}_{i}$ for all $(\omega,x)\in\hat
{\Omega}\times\hat{Q}_{i},i=1,2$, and%
\begin{equation}
\pi(f_{1}(x,\omega))=f_{2}(\pi x,\omega)\text{ for }\omega\in\hat{\Omega
}\text{ and }x\in\hat{Q}_{1}. \label{conjugacy0}%
\end{equation}

\end{definition}

The map $\pi$ is called a semi-conjugacy from $(f_{1},\eta_{1})$ to
$(f_{2},\eta_{2})$. In terms of the solutions, condition (\ref{conjugacy0})
implies that for $\nu^{\mathbb{N}}$-a.a. $u\in\Omega^{\mathbb{N}}$ and
$\eta_{1}$-a.a. $x\in Q_{1}$%
\[
\pi\varphi_{1}(k,x_{0},u)=\varphi_{2}(k,\pi x_{0},u)\text{ for all }%
k\in\mathbb{N}.
\]
With the associated skew product maps $S_{i}(u,x)=(\theta u,f_{i}(x,u_{0}))$
one obtains $S_{i}(\hat{\Omega}^{\mathbb{N}}\times\hat{Q}_{i})\subset
\hat{\Omega}^{\mathbb{N}}\times\hat{Q}_{i},i=1,2$, and for all $(u,x)\in
\hat{\Omega}^{\mathbb{N}}\times\hat{Q}_{1}$%
\begin{align}
\left(  \mathrm{id}_{\mathcal{U}}\times\pi\right)  \circ S_{1}(u,x)  &
=\left(  \mathrm{id}_{\mathcal{U}}\times\pi\right)  (\theta u,f_{1}%
(x,u_{0}))=(\theta u,f_{2}(\pi x,u_{0}))\label{conjugacy3+}\\
&  =S_{2}\circ\left(  \mathrm{id}_{\mathcal{U}}\times\pi\right)
(u,x).\nonumber
\end{align}
If the map $\pi$ is a bimeasurable bijection, we obtain an equivalence
relation called conjugacy. A consequence of the following theorem is that the
metric invariance entropies are invariant under conjugacies.

\begin{theorem}
\label{Theorem_conjugacy}Suppose that for two control systems given by
(\ref{factor}) there is a semi-conjugacy $\pi$ from $(f_{1},\eta_{1})$ to
$(f_{2},\eta_{2})$. Then the constants $\rho_{i}$ coincide and with $\mu
_{i}=\nu^{\mathbb{N}}\times\eta_{i},i=1,2$, the metric invariance entropy
satisfies%
\[
h_{\mu_{1}}(Q_{1})\leq h_{\mu_{2}}(Q_{2})\text{ and }h_{\mu_{1}}^{inc}%
(Q_{1})\leq h_{\mu_{2}}^{inc}(Q_{2}).
\]

\end{theorem}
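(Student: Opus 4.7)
The plan is to lift the invariant-partition structure on side~$2$ back to side~$1$ through the factor map $\Pi := \mathrm{id}_{\mathcal U} \times \pi$ and exploit that $\Pi$ is essentially measure-preserving and intertwines the two skew products. From~(\ref{conjugacy2}) and the product form $\mu_i = \nu^{\mathbb N} \times \eta_i$ one has $\Pi_{\ast}\mu_1 = \mu_2$ on the $\mu_1$-full-measure set $\hat\Omega^{\mathbb N} \times \hat Q_1$, while~(\ref{conjugacy3+}) yields $\Pi \circ S_1 = S_2 \circ \Pi$ there. To derive $\rho_1 = \rho_2$, I substitute these into the defining equation of $\rho_2$ with $B = \mathcal U \times Q_2$: the right-hand side becomes $\mu_1(S_1^{-1}\Pi^{-1}(\mathcal U \times Q_2) \cap \Pi^{-1}(\mathcal U \times Q_2))$, and since $\Pi^{-1}(\mathcal U \times Q_2) \supseteq \mathcal U \times \hat Q_1$ has full $\mu_1$-measure, this reduces to $\rho_1$.

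Given an invariant $(Q_2,\eta_2)$-partition $\mathcal C^{(2)}_\tau = (\mathcal P^{(2)}, F^{(2)})$ whose feedback values lie in $\hat\Omega^\tau$ (to which the infimum in~(\ref{Main_entropy}) should be reducible by substituting any component outside $\hat\Omega$ by a fixed $\hat\omega \in \hat\Omega$, since by~(ii) such constants always keep $\eta_2$-a.a.\ starts inside $\hat Q_2 \subset Q_2$), I would form the pull-back $\mathcal C^{(1)}_\tau = (\mathcal P^{(1)}, F^{(1)})$ by $\mathcal P^{(1)} := \{\pi^{-1}(P) : P \in \mathcal P^{(2)}\}$ (extended over the $\eta_1$-null set $Q_1 \setminus \hat Q_1$ by Remark~\ref{Remark_null}) and $F^{(1)}(\pi^{-1}(P)) := F^{(2)}(P)$. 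Invariance of $\mathcal C^{(1)}_\tau$ is then immediate from~(ii) applied iteratively on $\hat\Omega^\tau \times \hat Q_1$.

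The entropy comparison rests on the identity $\Pi^{-1}(D_a^{(2)}) = D_a^{(1)}$ modulo $\mu_1$-null sets for every admissible word~$a$. Each partition element $A(P,\eta_2) = U^{(2)}(P) \times P$ pulls back to $U^{(2)}(P) \times \pi^{-1}(P)$, which differs from $A(\pi^{-1}(P),\eta_1)$ only in the $\mathcal U$-factor (both factors contain $\hat\Omega^{\mathbb N}$ and so have $\nu^{\mathbb N}$-measure $1$); the intertwining $\Pi \circ S_1^{i\tau} = S_2^{i\tau} \circ \Pi$ on the full-measure set carries the $S^{-i\tau}$-refinements through, and admissibility~(\ref{adm1}) transfers via $\pi_{\ast}\eta_1 = \eta_2$ together with the iterated conjugacy. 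Measure-preservation of $\Pi$ then gives $\mu_1(D_a^{(1)}) = \mu_2(D_a^{(2)})$, whence $H_{\rho^{-(n-1)\tau}\mu_1}(\mathfrak A_n^{(1)}) = H_{\rho^{-(n-1)\tau}\mu_2}(\mathfrak A_n^{(2)})$. The same pull-back applied to the induced partitions $\mathfrak A_j^{j+1}$ from~(\ref{induced}) yields equality of the conditional entropies in~(\ref{condition_fb0}). Taking $\limsup_n$, then $\inf_{\mathcal C_\tau}$, then $\limsup_\tau$ on both sides produces $h_{\mu_1}(Q_1) \le h_{\mu_2}(Q_2)$ and $h_{\mu_1}^{inc}(Q_1) \le h_{\mu_2}^{inc}(Q_2)$.

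The main obstacle I anticipate is justifying the preliminary reduction to feedbacks taking values in $\hat\Omega^\tau$: the component-wise substitution needed to enforce this changes the quantities $\eta_2\{x : \varphi_2(i\tau,x,u_a) \in P_{a_i}\}$ that govern admissibility, and I must argue that this change does not increase the invariance entropy, i.e.\ that the infimum on the right-hand side can be restricted to such feedbacks without loss. This is the single step that is not automatic from the semi-conjugacy and will hinge on the strong forward-invariance of $\hat Q_i$ under $\hat\Omega$-controls provided by~(ii); everything else is bookkeeping built from measure-preservation of $\Pi$ and the intertwining of $S_1$ with $S_2$.
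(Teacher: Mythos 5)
Your proposal follows the same route as the paper: pull the invariant $(Q_2,\eta_2)$-partition back through $\Pi = \mathrm{id}_{\mathcal U}\times\pi$, note that $\Pi_\ast\mu_1=\mu_2$ and $\Pi\circ S_1 = S_2\circ\Pi$ on a full-measure set, deduce that preimages of the sets $A(P,\eta_2)$ and $D_a$ agree with the corresponding objects on side $1$ modulo $\mu_1$-null sets, and pass to the infimum over partitions. The computation of $\rho_1=\rho_2$ is the same, and the treatment of the incremental entropy via the induced partitions $\mathfrak A_j^{j+1}$ matches the paper's second half.

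The one place where you go beyond the paper's text is in flagging the $\hat\Omega^\tau$-valuedness of the feedback. You are right that the paper's step
\[
\varphi_1(k,y,F(P))\in\pi^{-1}\varphi_2(k,\pi y,F(P))
\]
uses the intertwining $\pi\circ f_1(\cdot,\omega)=f_2(\cdot,\omega)\circ\pi$, which the definition of semi-conjugacy guarantees only for $\omega\in\hat\Omega$; for a deterministic $F(P)\in\Omega^\tau$ with a component outside $\hat\Omega$ (a $\nu^\tau$-null but possibly nonempty set) this is not given, so the claim that the pulled-back family is an invariant $(Q_1,\eta_1)$-partition is not automatic. The paper uses this implicitly without comment. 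Your proposed fix (replace components of $F(P)$ outside $\hat\Omega$ by a fixed $\hat\omega\in\hat\Omega$, which preserves invariance because $\hat Q_2$ is forward invariant under $\hat\Omega$-controls) produces a legitimate invariant $(Q_2,\eta_2)$-partition, but — as you yourself observe — it can alter which words are $(\eta_2,\mathcal C_\tau)$-admissible, since admissibility in (\ref{adm1}) is tested against the single control $u_a$ built from $F$; the sets $D_a$ themselves are unchanged (they depend only on $(\mathcal P,\tau)$), but the index set $\mathfrak A_n$ over which the entropy sum runs can gain or lose terms, so the entropy need not be monotone under the substitution. Thus the reduction to $\hat\Omega^\tau$-valued feedbacks is not free, and you have not closed that step. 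Your proposal is therefore exactly as complete as the paper's own argument: both rest on the same unstated restriction $F(P)\in\hat\Omega^\tau$, and neither supplies the argument that this restriction is without loss. It would be worth either building that restriction into the definition of a semi-conjugacy (or of an invariant $(Q,\eta)$-partition, in analogy with Remark~\ref{Remark_null}), or proving directly that the infimum in (\ref{Main_entropy}) is attained over $\hat\Omega^\tau$-valued feedbacks.
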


\begin{proof}
First observe that $\eta_{2}(\hat{Q}_{2})=\eta_{1}(\pi^{-1}\hat{Q}_{2}%
)=\eta_{1}(\hat{Q}_{1})$ and
\[
\mu_{2}=\nu^{\mathbb{N}}\times\eta_{2}=\left(  \mathrm{id}_{\mathcal{U}}%
\times\pi\right)  _{\ast}(\nu^{\mathbb{N}}\times\eta_{1})=\left(
\mathrm{id}_{\mathcal{U}}\times\pi\right)  _{\ast}\mu_{1}.
\]
Furthermore $\rho_{1}=\rho_{2}$, since properties (\ref{conjugacy2}) and
(\ref{conjugacy3+}) imply%
\begin{align*}
\rho_{2}  &  =\rho\mu_{2}(\mathcal{U}\times\hat{Q}_{2})=\mu_{2}\{(u,y)\in
\mathcal{U}\times Q_{2}\left\vert S_{2}(u,y)\in\mathcal{U}\times\hat{Q}%
_{2}\right.  \}\\
&  =\mu_{2}\{(u,y)\in\hat{\Omega}^{\mathbb{N}}\times\hat{Q}_{2}\left\vert
S_{2}(u,y)\in\mathcal{U}\times\hat{Q}_{2}\right.  \}\\
&  =\mu_{1}\left(  \mathrm{id}_{\mathcal{U}}\times\pi\right)  ^{-1}%
\{(u,y)\in\hat{\Omega}^{\mathbb{N}}\times\hat{Q}_{2}\left\vert S_{2}%
(u,y)\in\mathcal{U}\times\hat{Q}_{2}\right.  \}\\
&  =\mu_{1}\{(u,x)\in\hat{\Omega}^{\mathbb{N}}\times\hat{Q}_{1}\left\vert
S_{2}\circ(\mathrm{id}_{\mathcal{U}}\times\pi)(u,x)\in\mathcal{U}\times\hat
{Q}_{2}\right.  \}\\
&  =\mu_{1}\{(u,x)\in\hat{\Omega}^{\mathbb{N}}\times\hat{Q}_{1}\left\vert
(\mathrm{id}_{\mathcal{U}}\times\pi)\circ S_{1}(u,x)\in\mathcal{U}\times
\hat{Q}_{2}\right.  \}\\
&  =\mu_{1}\{(u,x)\in\hat{\Omega}^{\mathbb{N}}\times\hat{Q}_{1}\left\vert
S_{1}(u,x)\in\mathcal{U}\times\hat{Q}_{1}\right.  \}\\
&  =\mu_{1}\{(u,x)\in\mathcal{U}\times Q_{1}\left\vert S_{1}(u,x)\in
\mathcal{U}\times\hat{Q}_{1}\right.  \}\\
&  =\rho_{1}\mu_{1}(\mathcal{U}\times\hat{Q}_{1})=\rho_{1}.
\end{align*}
Let $\mathcal{C}_{2,\tau}=\mathcal{C}_{2,\tau}(\mathcal{P}_{2},F)$ be an
invariant $(Q_{2},\eta_{2})$-partition. Then it follows that $\pi
^{-1}\mathcal{P}_{2}\allowbreak=\{\pi^{-1}P\left\vert P\in\mathcal{P}%
_{2}\right.  \}$ is a measurable partition of $Q_{1}=\pi^{-1}Q_{2}$ modulo
$\eta_{1}$-null sets and we may assume that $\pi^{-1}P\subset\hat{Q}_{1}$ for
all $P$. For $P\in\mathcal{P}_{2}$ it follows that for $\eta_{2}$-a.a. $x\in
P$ one has $\varphi_{2}(k,x,F(P))\in Q_{2}$ for all $k\in\{1,\ldots,\tau\}$ if
and only if for $\eta_{1}$-a.a. $y\in\pi^{-1}P$ one has $y=\pi^{-1}x$\ for
some $x\in P$ and%
\[
\varphi_{1}(k,y,F(P))\in\pi^{-1}\varphi_{2}(k,x,F(P))\in\pi^{-1}Q_{2}%
=Q_{1}\text{ for all }k\in\{1,\ldots,\tau\}
\]
(note that the preimage under $\pi$ of an $\eta_{2}$-null set is an $\eta_{1}%
$-null set). By Remark \ref{Remark_null} it follows that $\mathcal{C}_{1,\tau
}=\mathcal{C}_{1,\tau}(\pi^{-1}\mathcal{P}_{2},F)$ with $F(\pi^{-1}%
P):=F(P),\pi^{-1}P\in\pi^{-1}\mathcal{P}_{1}$, is an invariant $(Q_{1}%
,\eta_{1})$-partition. Then the preimage of the collection $\mathfrak{A}%
(\mathcal{C}_{2,\tau})$ of $\mathcal{U}\times Q_{1}$ equals the collection%
\[
\mathfrak{A}(\mathcal{C}_{1,\tau})=\{\left(  \mathrm{id}_{\mathcal{U}}%
\times\pi\right)  ^{-1}A\left\vert A\in\mathfrak{A}(\mathcal{C}_{2,\tau
})\right.  \}.
\]
Let $a$ be a $(Q_{2},\eta_{2})$-admissible word. Then%
\begin{align*}
0  &  <\eta_{2}\left\{  y\in Q_{2}\left\vert \varphi_{2}(i\tau,y,u_{a})\in
P_{a_{i}}\text{ for }i=0,1,\ldots,n-1\right.  \right\} \\
&  =\eta_{2}\left\{  y\in\hat{Q}_{2}\left\vert \varphi_{2}(i\tau,y,u_{a})\in
P_{a_{i}}\text{ for }i=0,1,\ldots,n-1\right.  \right\} \\
&  =\eta_{1}\left\{  \pi^{-1}y\in\hat{Q}_{1}\left\vert \varphi_{2}%
(i\tau,y,u_{a})\in P_{a_{i}}\text{ for }i=0,1,\ldots,n-1\right.  \right\} \\
&  =\eta_{1}\left\{  x\in\hat{Q}_{1}\left\vert \varphi_{2}(i\tau,\pi
x,u_{a})\in P_{a_{i}}\text{ for }i=0,1,\ldots,n-1\right.  \right\} \\
&  =\eta_{1}\left\{  x\in\hat{Q}_{1}\left\vert \pi\varphi_{1}(i\tau
,x,u_{a})\in P_{a_{i}}\text{ for }i=0,1,\ldots,n-1\right.  \right\} \\
&  =\eta_{1}\left\{  x\in\hat{Q}_{1}\left\vert \varphi_{1}(i\tau,x,u_{a}%
)\in\pi^{-1}P_{a_{i}}\text{ for }i=0,1,\ldots,n-1\right.  \right\}  .
\end{align*}
It follows that $a$ is also $(Q_{1},\eta_{1})$-admissible. The same arguments
show that every $(Q_{1},\eta_{1})$-admissible word is also $(Q_{2},\eta_{2}%
)$-admissible and hence for all $n\in\mathbb{N}$%
\[
\left(  \mathrm{id}_{\mathcal{U}}\times\pi\right)  ^{-1}\mathfrak{A}%
_{n}(\mathcal{C}_{2,\tau})=\mathfrak{A}_{n}(\mathcal{C}_{1,\tau}).
\]
One finds for the entropy%
\[
H_{\rho_{1}^{-(n-1)\tau}\mu_{1}}(\mathfrak{A}_{n}(\mathcal{C}_{1,\tau
}))=H_{\rho_{2}^{-(n-1)\tau}\mu_{2}}(\mathfrak{A}_{n}(\mathcal{C}_{2,\tau
})),n\in\mathbb{N},
\]
and hence $h_{\mu_{1}}(\mathcal{C}_{1,\tau})=h_{\mu_{2}}(\mathcal{C}_{2,\tau
})$.

Taking first the infimum over all invariant $(Q_{2},\eta_{2})$-partitions and
then over all invariant $(Q_{1},\eta_{1})$-partitions one finds that
$h_{\mu_{1}}(Q_{1})\allowbreak\leq h_{\mu_{2}}(Q_{2})$. These arguments also
show that $h_{\mu_{2}}(Q_{1})=\infty$ if $h_{\mu_{1}}(Q_{2})\allowbreak
=\infty$.

For the incremental invariance entropy one similarly finds that for all
$n\in\mathbb{N}$%
\[
\left(  \mathrm{id}_{\mathcal{U}}\times\pi\right)  \mathfrak{A}_{n}%
^{n+1}(\mathcal{C}_{1,\tau})=\mathfrak{A}_{n}^{n+1}(\mathcal{C}_{2,\tau}).
\]
Then it follows that
\[
H_{\rho^{-n\tau}\mu_{1}}(\mathfrak{A}_{n+1}(\mathcal{C}_{1,\tau})\left\vert
\mathfrak{A}_{n}^{n+1}(\mathcal{C}_{1,\tau})\right.  )=H_{\rho^{-n\tau}\mu
_{2}}(\mathfrak{A}_{n+1}(\mathcal{C}_{2,\tau})\left\vert \mathfrak{A}%
_{n}^{n+1}(\mathcal{C}_{2,\tau})\right.  )
\]
and the inequality of the incremental invariance entropies is a consequence.
\end{proof}

\begin{remark}
Observe that the inequalities for invariance entropies under semi-conjugacy
are opposite to the inequalities for entropy of dynamical systems, cf. Viana
and Oliveira \cite[Exercise 9.1.5]{VianO16}. This is due to the fact that we
construct invariant $(Q_{1},\eta_{1})$-partitions from invariant $(Q_{2}%
,\eta_{2})$-partitions and then take the infimum (instead of the supremum) of
partitions. Note also that for topological invariance entropy Kawan
\cite[Proposition 2.13]{Kawa13} constructs from spanning sets of controls for
$Q_{1}$ spanning sets for $Q_{2}$. Then letting the time tend to infinity and
taking the infimum over spanning sets one gets that the invariance entropy of
$Q_{1}$ is greater than or equal to the invariance entropy of $Q_{2}%
$.\smallskip
\end{remark}

To conclude this section we show that the metric invariance entropy is bounded
above by the topological invariance entropy. As in Kawan \cite[Definition 2.2
and Proposition 2.3(ii)]{Kawa13} consider for system (\ref{0.1}) a compact
controlled invariant set $Q\subset M$, i.e., for every $x\in Q$ there is
$\omega_{x}\in\Omega$ with $f(x,\omega_{x})\in Q$. For $\tau\in\mathbb{N}$ a
set $\mathcal{R}\subset\mathcal{U}$ is called $(\tau,Q)$-spanning if for all
$x\in Q$ there is $u\in\mathcal{R}$ with $\varphi(n,x,u)\in Q$ for all
$n=1,\ldots,\tau$. Denote by $r_{inv}(\tau,Q)$ the minimal number of elements
such a set can have (if no finite $(\tau,Q)$-spanning set exists,
$r_{inv}(\tau,Q):=\infty$). The topological invariance entropy is defined by%
\[
h_{inv}(Q):=\lim_{\tau\rightarrow\infty}\frac{1}{\tau}\log r_{inv}(\tau,Q).
\]
In order to relate this notion to metric invariance entropy, we use a
characterization of topological invariance entropy by invariant partitions
(the original definition due to Nair et al. \cite{NEMM04} uses invariant open
covers). Here a (topological) invariant partition $\mathcal{C}_{\tau
}=\mathcal{C}(\mathcal{P},\tau,F)$ is defined by $\tau\in\mathbb{N}$, a finite
measurable partition $\mathcal{P}=\{P_{1},\ldots,P_{q}\}$ of $Q$ and
$F:\mathcal{P}\rightarrow\Omega^{\tau}$ such that%
\[
\varphi(k,P,F(P))\subset Q\text{ for }k=1,\ldots,\tau.
\]
Thus, in contrast to $(Q,\eta)$-invariant partitions (cf. Definition
\ref{Definition_inv_part}), it is required that every $x\in P$ remains in $Q$
under the feedback $F(P)$. Then a word $a=[a_{0},a_{1},\ldots,a_{n-1}]$ is
called admissible if there exists a point $x\in Q$ with $\varphi(i\tau
,x,u_{a})\in P_{a_{j}}$ for $i=0,1,\ldots,n-1$. Write $\#\mathcal{W}%
_{n}(\mathcal{C}_{\tau})$ for the number of elements in the set $\mathcal{W}%
_{n}(\mathcal{C}_{\tau})$ of all admissible words of length $n$ and define the
entropy of $\mathcal{C}_{\tau}$ by
\[
h_{top}(\mathcal{C}_{\tau}):=\lim_{n\rightarrow\infty}\frac{\log
\#\mathcal{W}_{n}(\mathcal{C}_{\tau})}{n\tau}=\inf_{n\in\mathbb{N}}\frac
{\log\#\mathcal{W}_{n}(\mathcal{C}_{\tau})}{n\tau}.
\]
A topological invariant partition of $Q$ is also a $(Q,\eta)$-partition and an
$\eta$-admissible word is also admissible in the topological sense. The
following characterization of topological invariance entropy is given in Kawan
\cite[Theorem 2.3 and its proof]{Kawa13}.

\begin{theorem}
\label{Kawan1}For a compact and controlled invariant set $Q$ it holds that%
\[
h_{inv}(Q)=\inf_{\mathcal{C}_{\tau}}h(\mathcal{C}_{\tau})=\underset
{\tau\rightarrow\infty}{\lim}\inf_{\mathcal{C}_{\tau}}h(\mathcal{C}_{\tau}),
\]
where the first infimum is taken over all invariant $Q$-partitions
$\mathcal{C}_{\tau}$ and the second infimum is taken over all invariant
$Q$-partitions $\mathcal{C}_{\tau}$ with fixed $\tau\in\mathbb{N}$.
\end{theorem}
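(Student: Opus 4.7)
The plan is to prove two inequalities and then assemble them into a chain of equalities. First inequality: for every invariant partition $\mathcal{C}_{\tau}$ one has $h_{inv}(Q)\le h(\mathcal{C}_{\tau})$. Second inequality: for every $\tau$ one has $\inf_{\mathcal{C}_{\tau}} h(\mathcal{C}_{\tau})\le \log r_{inv}(\tau,Q)/\tau$. Throughout I use the standard fact $h_{inv}(Q)=\inf_{T}\log r_{inv}(T,Q)/T$ (so that the defining limit actually exists), which follows by Fekete's lemma from the submultiplicativity $r_{inv}(T_{1}+T_{2},Q)\le r_{inv}(T_{1},Q)\,r_{inv}(T_{2},Q)$ obtained by concatenating spanning controls.

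For the first inequality, fix $\mathcal{C}_{\tau}=(\mathcal{P},F)$ and $n\in\mathbb{N}$. Every $x\in Q$ determines inductively a unique $\mathcal{C}_{\tau}$-admissible word $a=[a_{0},\ldots,a_{n-1}]$: $x\in P_{a_{0}}$, then $\varphi(\tau,x,F(P_{a_{0}}))\in P_{a_{1}}$ (legitimate because the topological invariance condition forces $\varphi(\tau,x,F(P_{a_{0}}))\in Q$), and so on. By invariance applied at each step the control $u_{a}$ keeps $x$ in $Q$ for every $k=1,\ldots,n\tau$, so $\{u_{a}:a\in\mathcal{W}_{n}(\mathcal{C}_{\tau})\}$ is $(n\tau,Q)$-spanning. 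Thus $r_{inv}(n\tau,Q)\le \#\mathcal{W}_{n}(\mathcal{C}_{\tau})$; taking logarithms, dividing by $n\tau$, letting $n\to\infty$, and using the infimum form of $h_{inv}$ yields $h_{inv}(Q)\le h(\mathcal{C}_{\tau})$.

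For the second inequality, fix $\tau$ and a minimal $(\tau,Q)$-spanning set $\{u^{(1)},\ldots,u^{(r)}\}$, where $r=r_{inv}(\tau,Q)$. The sets $\tilde P_{i}:=\{x\in Q:\varphi(k,x,u^{(i)})\in Q\text{ for }k=1,\ldots,\tau\}$ are closed (by continuity of $\varphi$ and closedness of $Q$) and cover $Q$ by the spanning property. Disjointifying, $P_{i}:=\tilde P_{i}\setminus\bigcup_{j<i}\tilde P_{j}$ produces a finite measurable partition of $Q$, and with $F(P_{i}):=u^{(i)}|_{\{0,\ldots,\tau-1\}}$ one obtains a topological invariant partition $\mathcal{C}_{\tau}$. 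Since $\mathcal{W}_{n}(\mathcal{C}_{\tau})\subset\{1,\ldots,r\}^{n}$ one has $\#\mathcal{W}_{n}(\mathcal{C}_{\tau})\le r^{n}$, whence $h(\mathcal{C}_{\tau})\le \log r/\tau$. (Here I also invoke $h(\mathcal{C}_{\tau})=\inf_{n}\log\#\mathcal{W}_{n}(\mathcal{C}_{\tau})/(n\tau)$ via Fekete applied to $\#\mathcal{W}_{n+m}\le \#\mathcal{W}_{n}\cdot\#\mathcal{W}_{m}$: if $[a_{0},\ldots,a_{n+m-1}]$ is admissible, witnessed by some $x\in Q$, then the head is admissible via $x$ and the tail is admissible via $\varphi(n\tau,x,u_{a})\in P_{a_{n}}\subset Q$.)

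Combining the two inequalities gives
\[
\begin{aligned}
h_{inv}(Q) & \le \inf_{\mathcal{C}_{\tau}} h(\mathcal{C}_{\tau}) \le \liminf_{\tau\to\infty}\inf_{\mathcal{C}_{\tau}} h(\mathcal{C}_{\tau}) \\
& \le \limsup_{\tau\to\infty}\inf_{\mathcal{C}_{\tau}} h(\mathcal{C}_{\tau}) \le \lim_{\tau\to\infty}\frac{\log r_{inv}(\tau,Q)}{\tau} = h_{inv}(Q),
\end{aligned}
\]
so all four middle quantities equal $h_{inv}(Q)$ and the limit in $\tau$ exists. The main obstacle is conceptual rather than computational: one must recognize that invariant partitions and $(\tau,Q)$-spanning sets are essentially dual objects, each producing the other. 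A technical point worth underscoring is that $h(\mathcal{C}_{\tau})$ is a genuine limit (and an infimum) in the topological setting, whereas in the metric setting the tail-admissibility argument fails, as explained in the remark surrounding \eqref{incl}, which is exactly why the metric invariance entropy had to be defined via $\limsup$ rather than $\lim$.
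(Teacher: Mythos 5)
Your proof is correct and takes the expected route: invariant $Q$-partitions and minimal $(\tau,Q)$-spanning sets are interconvertible, and Fekete's lemma applied to the two submultiplicative counting functions $r_{inv}(\cdot,Q)$ and $\#\mathcal{W}_{\cdot}(\mathcal{C}_{\tau})$ closes the chain of inequalities. The paper gives no proof of its own but cites Kawan \cite[Theorem 2.3]{Kawa13} (stated there for invariant open covers rather than measurable partitions); your reconstruction matches that argument in the partition setting, and you correctly single out the tail-admissibility step (a single witness point $\varphi(n\tau,x,u_a)$) as exactly what makes $\#\mathcal{W}_{n}$ submultiplicative yet fails for $\eta$-admissibility, which is the content of the paper's remark surrounding the non-inclusion (\ref{incl}).
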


The following theorem relates metric and topological invariance entropy.

\begin{theorem}
\label{comparison}Let $Q$ be a compact and controlled invariant set $Q$. Then
for every quasi-stationary measure $\eta$ on $Q$ the metric entropy with
respect to $\mu=\nu^{\mathbb{N}}\times\eta$ satisfies%
\[
h_{\mu}(Q)\leq h_{inv}(Q).
\]

\end{theorem}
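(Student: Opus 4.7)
The plan is to compare, for each fixed $\tau$, the metric entropy of a topological invariant $Q$-partition $\mathcal{C}_{\tau}=\mathcal{C}(\mathcal{P},\tau,F)$ with its topological entropy $h_{top}(\mathcal{C}_{\tau})$, and then to pass to the infimum and the limit in $\tau$ using Theorem \ref{Kawan1}. The key observation is that a topological invariant $Q$-partition automatically satisfies the weaker a.e.\ condition in Definition \ref{Definition_inv_part}, so it is also an invariant $(Q,\eta)$-partition. Moreover, the $\eta$-admissibility condition (\ref{adm1}) requires positivity of the $\eta$-measure of an intersection of preimages, which in particular requires the intersection to be nonempty; hence every $(\eta,\mathcal{C}_{\tau})$-admissible word is admissible in the topological sense, and consequently the number of cells of $\mathfrak{A}_{n}(\mathcal{C}_{\tau})$ is bounded by $\#\mathcal{W}_{n}(\mathcal{C}_{\tau})$.

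Next I would apply the elementary bound $H_{m}(\mathcal{Q})\le\log\#\mathcal{Q}$ valid for any probability measure $m$ and any finite measurable partition $\mathcal{Q}$. Following Remark \ref{Remark_partition}, I augment $\mathfrak{A}_{n}(\mathcal{C}_{\tau})$ by the complement $Z_{n}$ to obtain a bona fide partition of $S_{Q}^{-(n-1)\tau}(\mathcal{U}\times Q)$ with at most $\#\mathcal{W}_{n}(\mathcal{C}_{\tau})+1$ cells. Hence
\[
H_{\rho^{-(n-1)\tau}\mu}\bigl(\mathfrak{A}_{n}(\mathcal{C}_{\tau})\cup\{Z_{n}\}\bigr)\le\log\bigl(\#\mathcal{W}_{n}(\mathcal{C}_{\tau})+1\bigr).
\]
Dividing by $n\tau$, taking $\limsup_{n\to\infty}$, and using that Remark \ref{Remark_partition} shows this limit superior equals $h_{\mu}(\mathcal{C}_{\tau},Q)$, I obtain
\[
h_{\mu}(\mathcal{C}_{\tau},Q)\le\lim_{n\to\infty}\frac{\log\#\mathcal{W}_{n}(\mathcal{C}_{\tau})}{n\tau}=h_{top}(\mathcal{C}_{\tau}).
\]

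Finally, since any topological invariant $Q$-partition is also an invariant $(Q,\eta)$-partition, the infimum in Definition \ref{Definition_main} is taken over a collection at least as large as the one appearing in Theorem \ref{Kawan1}; therefore
\[
\inf_{\mathcal{C}_{\tau}\ (Q,\eta)\text{-invariant}}h_{\mu}(\mathcal{C}_{\tau},Q)\le\inf_{\mathcal{C}_{\tau}\text{ topological}}h_{top}(\mathcal{C}_{\tau}).
\]
Taking $\limsup_{\tau\to\infty}$ on both sides and applying Theorem \ref{Kawan1}, which guarantees that the right-hand side converges to $h_{inv}(Q)$, yields $h_{\mu}(Q)\le h_{inv}(Q)$, as desired. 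Note that if $h_{inv}(Q)=\infty$ the inequality is trivial, and if $h_{inv}(Q)<\infty$ the controlled invariance of $Q$ together with Theorem \ref{Kawan1} guarantees the existence of topological invariant $Q$-partitions for each $\tau$, so that the infima above are meaningful.

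The only subtlety is bookkeeping: making sure that $\mathfrak{A}_{n}(\mathcal{C}_{\tau})$ is genuinely indexed by $\eta$-admissible words (so that the cardinality bound by $\#\mathcal{W}_{n}(\mathcal{C}_{\tau})$ really holds), and that the addition of the complement $Z_{n}$ contributes at most $1/e$ to the entropy so it vanishes after dividing by $n\tau$. No new construction is needed beyond Remark \ref{Remark_partition} and the elementary bound on the entropy of a finite partition.
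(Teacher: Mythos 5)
Your proof is correct and follows essentially the same route as the paper: take a topological invariant $Q$-partition, observe it is also a $(Q,\eta)$-invariant partition, note that $\eta$-admissible words are topologically admissible so $\#\mathfrak{A}_{n}\leq\#\mathcal{W}_{n}$, and apply the elementary bound $H\leq\log\#$ before passing to infima and limits via Theorem \ref{Kawan1}. The one small difference is bookkeeping: the paper directly writes $H_{\rho^{-(n-1)\tau}\mu}(\mathfrak{A}_{n})\leq\log\#\mathfrak{A}_{n}$ (absorbing the fact that $\mathfrak{A}_{n}$ may not cover all of $S_{Q}^{-(n-1)\tau}(\mathcal{U}\times Q)$ into an implicit $O(1)$ term that vanishes after dividing by $n\tau$), whereas you make this explicit by invoking Remark \ref{Remark_partition} and adding the complement $Z_{n}$; that is a slightly more careful phrasing of the same step, not a different method.
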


\begin{proof}
Let $\eta$ be a quasi-stationary measure on $Q$ and fix a topological
invariant partition $\mathcal{C}_{\tau}(\mathcal{P},F)$. Then $\mathcal{C}%
_{\tau}(\mathcal{P},F)$ is also a $(Q,\eta)$-invariant partition and
$\#\mathfrak{A}_{n}(\mathcal{C}_{\tau})\leq\#\mathcal{W}_{N}(\mathcal{C}%
_{\tau})$ for every $n\in\mathbb{N}$, since an $\eta$-admissible word is also
trivially admissible in the topological sense. Using%
\[
H_{\rho^{-(n-1)\tau}\mu}(\mathfrak{A}_{n}(\mathcal{C}_{\tau}))\leq
\log\#\mathfrak{A}_{n}(\mathcal{C}_{\tau})
\]
one finds that%
\[
h_{\mu}(\mathcal{C}_{\tau},Q)=\underset{n\rightarrow\infty}{\lim\sup}\frac
{1}{n\tau}H_{\rho^{-(n-1)\tau}\mu}(\mathfrak{A}_{n}(\mathcal{C}_{\tau}%
))\leq\lim_{n\rightarrow\infty}\frac{\log\#\mathcal{W}_{n}(\mathcal{C}_{\tau
})}{n\tau}=h(\mathcal{C}_{\tau}).
\]
This yields the assertion $h_{\mu}(Q)\leq h_{inv}(Q)$.
\end{proof}

The following example illustrates the existence of quasi-stationary measures
in a simple situation (it is a modification of Colonius, Homburg and Kliemann
\cite[Example 1]{ColoHK10}).

\begin{example}
\label{Example1}Consider the family of control systems depending on a real
parameter given by $f_{\alpha}:{\mathbb{R}}/{\mathbb{Z}}\times\lbrack
-1,1]\rightarrow{\mathbb{R}}/{\mathbb{Z}}$,
\begin{equation}
f_{\alpha}(x,\omega)=x+\sigma\cos(2\pi x)+A\omega+\alpha\mod 1. \label{Ex1}%
\end{equation}
Suppose that the amplitudes $A$ and $\sigma$ as well as $\alpha$ take on small
positive values. Let a probability measure $\nu$ on $\Omega:=[-1,1]$ be given.
One obtains Markov transition probabilities%
\[
p(x,B)=\nu\{\omega\in\Omega\left\vert f_{\alpha}(x,\omega)\in B\right.
\}\text{ for }x\in{\mathbb{R}}/{\mathbb{Z}}\text{ and }B\subset{\mathbb{R}%
}/{\mathbb{Z}}.
\]
For $\alpha_{0}=\sigma-A$ the extremal graph $f_{\alpha_{0}}(\cdot,1)$ is
tangent to the diagonal at a point $c_{0}$. Now let $\alpha>\alpha_{0}$ and
consider $Q=[0.2,0.5]$. Colonius \cite[Theorem 2.9]{Colo16a} implies the
existence of a quasi-stationary measure $\eta$ for $Q$ with $0<\rho<1$ if
$\nu$ has a density with respect to Lebesgue measure and there is $\gamma>0$
such that $p(x,Q)\geq\gamma>0$ for all $x\in Q$. If we take the uniform
distribution on $\Omega=[-1,1]$, these condition are satisfied. By
\cite[Proposition 2.4]{Colo16a}, the support of the corresponding
conditionally invariant measure $\mu=\nu^{\mathbb{N}}\times\eta$ is contained
in%
\[
\{(u,x)\in\mathcal{U}\times Q\left\vert S_{Q}^{-n}(u,x)\cap\left(
\mathcal{U}\times Q\right)  \not =\emptyset\text{ for all }n\in\mathbb{N}%
\right.  \}.
\]
Let $d(\alpha)<0.5$ be given by the intersection of the lower sinusoidal curve
$f_{\alpha}(\cdot,-1)$ with the diagonal. Then points to the left of
$[d(\alpha),0.5]$ leave $Q$ backwards in time, hence they cannot be in the
support of $\eta$. Thus the quasi-stationary measure $\eta$ has support
contained in $[d(\alpha),0.5]$. Observe that for the uniform distribution on
$\Omega$ there is no stationary measure $\eta$ with support in $Q$, hence
there is no invariant measure $\mu$ of the form $\mu=\nu^{\mathbb{N}}%
\times\eta$.
\end{example}

The Variational Principle for dynamical systems states that the supremum of
the metric entropies coincides with the topological entropy. Certainly, an
analogous result for invariance entropy would give considerable structural
insight. Apart from this, however, it would only be of limited interest. The
metric invariance entropy is introduced since it is smaller than the
topological invariance entropy (see also the discussion of coder-controllers
in Section \ref{Section_Coder}). Instead of asking for measures $\mu$
maximizing the entropy one should instead look for measures minimizing
$h_{\mu}$ over a class $\mathcal{M}_{ad}$ of admissible measures, hence to
determine measures $\mu_{0}$ with minimal invariance entropy, i.e.,%
\[
h_{\mu_{0}}(Q)=\inf\nolimits_{\mu\in\mathcal{M}_{ad}}h_{\mu}(Q).
\]
This induces the question over which class $\mathcal{M}_{ad}$ of measures one
should minimize. The set of all measures $\mu=\nu^{\mathbb{N}}\times\eta$
where $\eta$ is quasi-stationary with respect to $\nu$ is too big, since one
would often obtain that the minimum is zero:\ This is illustrated by Example
\ref{Example1}. There are many stationary measures $S$ with support in $Q$:
Take $\mu=\left(  \delta_{\omega}\right)  ^{\mathbb{N}}\times\delta_{x(u)}$,
where $\delta_{\omega}$ and $\delta_{x(\omega)}$ are Dirac measures with
$f(x(\omega),\omega)=x(\omega)\in Q$ and $h_{\mu}=0$. The reason is that by
the choice $\nu=\delta_{\omega}$ the invariance problem is already solved.
Instead it seems reasonable to require at least that the support of $\nu$
coincides with the control range $\Omega$, since otherwise we would already
know that we do not need controls in $\Omega\setminus\mathrm{supp}\nu$. If
$\Omega\subset\mathbb{R}^{m}$ has positive Lebesgue measure $\lambda(\Omega)$
one might even restrict attention to the measures $\nu$ which are absolutely
continuous with respect to $\lambda$. Concerning the support of the
quasi-stationary measure $\eta$ it is immediately clear that $Q\setminus
\mathrm{supp}\eta$ does not contribute to $h_{\mu}$. Some further results will
be given in Sections \ref{Section3} and \ref{Section4}.

\section{Relations to coder-controllers\label{Section_Coder}}

This section defines coder-controllers associated with a quasi-stationary
measure rendering $Q$ invariant, and shows that their minimal entropy
coincides with the entropy\textbf{\ }$h_{\mu}(Q)$.

A coder-controller (cf., e.g., Kawan \cite[Section 2.5]{Kawa13}) may be
defined as a quadruple $\mathcal{H}=(S,\gamma,\delta,\tau)$ where
$S=(S_{k})_{k\in\mathbb{N}}$ denotes finite coding alphabets and the coder
mapping $\gamma_{k}:M^{k+1}\rightarrow S_{k}$ associates to the present and
past states the symbol $s_{k}\in S_{k}$. At time $k\tau$, the controller has
$k+1$ symbols $s_{0},\ldots,s_{k}$ available and generates a control $u_{k}%
\in\Omega^{\tau}$. The corresponding controller mapping is $\delta_{k}%
:S_{0}\times\cdots\times S_{k}\rightarrow\Omega^{\tau}$. Thus of interest are
for $x_{0}\in Q$ the sequences%
\begin{equation}
x_{k+1}:=\varphi(\tau,x_{k},u_{k}),k\in\mathbb{N}, \label{x_k}%
\end{equation}
with%
\begin{equation}
u_{k}=\delta_{k}(\gamma_{0}(x_{0}),\gamma_{1}(x_{0},x_{1}),\ldots,\gamma
_{k}(x_{0},x_{1},\ldots,x_{k}))\in\Omega^{\tau} \label{u_k}%
\end{equation}
satisfying%
\begin{equation}
\varphi(i,x_{k},u_{k})\in Q\text{ for all }i\in\{1,\ldots,\tau\}\text{ and all
}k\in\mathbb{N}. \label{3.3}%
\end{equation}
In the following construction, we suppose that the coding alphabet $S$ is
independent of $k$. Hence%
\[
\gamma_{k}:M^{k+1}\rightarrow S\text{ and }\delta_{k}:S^{k+1}\rightarrow
\Omega^{\tau}.
\]
Each sequence of symbols in $S^{k}$ defines a coding region in $Q$ which is
defined as the set of all initial states $x_{0}$ which force the coder to
generate this sequence. More precisely, for $s=(s_{0},\ldots,s_{k-1})\in
S^{k}$ let%
\[
P_{s}:=\{x_{0}\in Q\left\vert \gamma_{0}(x_{0})=s_{0},\ldots,\gamma
_{k-1}(x_{0},x_{1},\ldots,x_{k-1})=s_{k-1}\right.  \},
\]
where $x_{j},j=1,\ldots,k-1$, are generated by (\ref{x_k}), (\ref{u_k}).
Furthermore, let%
\[
u_{s}:=(\delta_{0}(s_{0}),\delta_{1}(s_{0},s_{1}),\ldots,\delta_{k-1}%
(s_{0},\ldots,s_{k-1}))\in\Omega^{k\tau}.
\]
Thus $P_{s}$ consists of the points $x_{0}$ satisfying for $j=0,\ldots,k-1$%
\[
(x_{0},\varphi(\tau,x_{0},u_{s}),\ldots,\varphi(j\tau,x_{0},u_{s}))\in
\gamma_{j}^{-1}(s_{j}),
\]
and one has $\varphi(i,x_{0},u_{s})\in Q$ for all $i=0,\ldots,k\tau$.

Again, suppose that a quasi-stationary measure $\eta$ with $\rho=\int
_{Q}p(x,Q)\eta(dx)$ corresponding to $\nu$ on $\Omega$ is fixed and let
$\mu=\nu^{\mathbb{N}}\times\eta$. The set $S_{ad}^{k}$ of $(Q,\eta
)$-admissible words\ is the set of $s\in S^{k}$ with $\eta(P_{s})>0$.

For a $(Q,\eta)$-admissible word we consider all controls $u$ such that the
application of $u$ yields the same word and renders $Q$ invariant with
probability $1$. More explicitly, define for $s_{0}\in S$
\[
A(P_{s_{0}}):=\{u\in\mathcal{U}\left\vert \varphi(i,x,u)\in Q\text{ for
}i=1,\ldots,\tau\text{ and }\eta\text{-a.a. }x\in P_{s_{0}}\right.  \}\times
P_{s_{0}}%
\]
and for $s=(s_{0},\ldots,s_{k-1})\in S_{ad}^{k}$%
\[
D_{s}:=A(P_{s_{0}})\cap S^{-\tau}A(P_{s_{1}})\cap\cdots\cap S^{-(k-1)\tau
}A(P_{s_{k-1}})\subset S_{Q}^{-(k-1)\tau}(\mathcal{U}\times Q).
\]
Observe that for $(u,x_{0})\in D_{s}$ one obtains $\varphi(j\tau,x_{0},u)\in
P_{s_{j}},j=0,\ldots,k-1$. Define a measure $\lambda_{k}$ on the (finite) set
$S_{ad}^{k}$ by%
\begin{equation}
\lambda_{k}(s):=\rho^{-(k-1)\tau}\mu(D_{s}). \label{nuk}%
\end{equation}
The considered coder maps $\gamma_{k}:M^{k+1}\rightarrow S$ will only be
defined on a set of full $\eta^{k+1}$-measure and the considered controller
maps $\delta_{k}$ will only be defined on a set of full $\lambda_{k+1}%
$-measure. The following coder-controllers render $Q$ invariant.

\begin{definition}
A $(Q,\eta)$-coder-controller is a quadruple $\mathcal{H}=(S,\gamma
,\delta,\tau)$ as above such that $\left\{  P_{s}\left\vert s\in S\right.
\right\}  $ forms a partition (modulo $\eta$-null sets) of $Q$. The entropy of
$\mathcal{H}$ is defined as%
\[
R(\mathcal{H})=\underset{k\rightarrow\infty}{\lim\sup}\frac{1}{k\tau
}H_{\lambda_{k}}(S_{ad}^{k}).
\]

\end{definition}

\begin{remark}
In general, $\lambda_{k}(S_{ad}^{k})<1$, hence the measures $\lambda_{k}$ are
not probability measures. Nevertheless, it makes sense to consider the
associated entropy, cf. Remark \ref{Remark_partition} for a similar situation.
\end{remark}

\begin{remark}
Kawan \cite[p. 72 and p. 83]{Kawa13} (see also Nair, Evans, Mareels and Moran
\cite{NEMM04}) defines coder-controllers with and without $\tau$ (i.e.,
$\tau=1$) and the coder maps $\gamma_{k}$ may or may not depend on the past
symbols (in addition to the past states). Furthermore, it is usually assumed
that the size of the set of symbols may vary with $k$. Using the same set of
symbols for every $k$ amounts to requiring that $\sup_{k\in\mathbb{N}}$
$\#S_{k}<\infty$ where $\#S_{k}$ is the size of the set $S_{k}$ of symbols at
time $k\tau$. In the (topological) definition of data rates this might be
taken into account by looking at the number of symbols actually used at time
$k\tau$. In fact, $\sup_{k\in\mathbb{N}}\#S_{k}<\infty$ in the situations
considered in \cite[Theorem 2.1 and Theorem 2.4]{Kawa13}.
\end{remark}

When one wants to relate coder-controllers to the invariance entropy in
Definition \ref{Definition_main}, it may appear rather straightforward to
identify the elements of an invariant partition with the set of symbols for a
coder-controller and the feedbacks $F(P)$ with the controls generated by a
coder-controller. This is the content of the following theorem.

\begin{theorem}
Consider system (\ref{0.1}) and let $\eta$ be a quasi-stationary measure for a
closed set $Q$ corresponding to $\nu$ on $\Omega$ and denote $\mu
=\nu^{\mathbb{N}}\times\eta$. The $\mu$-invariance entropy satisfies%
\[
h_{\mu}(Q)=\underset{\tau\rightarrow\infty}{\lim\sup}\inf_{\mathcal{H}%
}R(\mathcal{H}),
\]
where the infimum is taken over all $(Q,\eta)$-coder-controllers
$\mathcal{H}=(S,\gamma,\delta,\tau)$.
\end{theorem}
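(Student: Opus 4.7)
The plan is to prove the equality by establishing both inequalities via a correspondence between invariant $(Q,\eta)$-partitions and $(Q,\eta)$-coder-controllers. The critical structural observation is that the atoms $A(P)$ and $D_a$ of the partition framework, and the atoms $D_s$ of the coder-controller framework, depend only on the coding or partition structure and not on the specific feedback or controller; the two frameworks therefore differ only in their admissibility conditions on words.

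For the direction $\limsup_{\tau\to\infty}\inf_{\mathcal{H}}R(\mathcal{H})\leq h_\mu(Q)$, I would construct from any invariant $(Q,\eta)$-partition $\mathcal{C}_\tau(\mathcal{P},F)$ with $\mathcal{P}=\{P_1,\ldots,P_q\}$ a Markovian coder-controller $\mathcal{H}=(S,\gamma,\delta,\tau)$: take $S:=\{1,\ldots,q\}$, let $\gamma_k(x_0,\ldots,x_k):=i$ whenever $x_k\in P_i$, and set $\delta_k(s_0,\ldots,s_k):=F(P_{s_k})$, depending only on the most recent symbol. Because $\delta$ is Markovian, the coder-controller's control $u_s$ for $s\in S^k$ coincides with the partition's iterated feedback $u_a$ for $a=s$, so the coding regions $P_s$ coincide modulo $\eta$-null sets with the iterated admissibility sets in the partition framework, the admissible word collections agree, $D_s=D_a$, and $\lambda_k(s)=\rho^{-(k-1)\tau}\mu(D_s)=\rho^{-(k-1)\tau}\mu(D_a)$. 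This yields $H_{\lambda_k}(S_{ad}^k)=H_{\rho^{-(k-1)\tau}\mu}(\mathfrak{A}_k(\mathcal{C}_\tau))$ for every $k$, hence $R(\mathcal{H})=h_\mu(\mathcal{C}_\tau,Q)$ exactly. Taking the infimum over invariant partitions and then $\limsup_{\tau\to\infty}$ delivers this direction.

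For the reverse inequality, I would use a time-dilation construction. Given a coder-controller $\mathcal{H}=(S,\gamma,\delta,\tau)$ and $k\in\mathbb{N}$, the $k$-step coding regions $\{P_s:s\in S_{ad}^k\}$ form a partition of $Q$ modulo $\eta$-null sets, and the controls $u_s\in\Omega^{k\tau}$ keep $\eta$-a.a.\ $x\in P_s$ in $Q$ for $k\tau$ steps. Hence $\tilde{\mathcal{C}}_k:=\mathcal{C}_{k\tau}(\{P_s:s\in S_{ad}^k\},\,s\mapsto u_s)$ is a valid invariant $(Q,\eta)$-partition with enlarged time step $k\tau$. I would then argue a subadditivity-type bound $H_{\rho^{-(n-1)k\tau}\mu}(\mathfrak{A}_n(\tilde{\mathcal{C}}_k))\leq n\,H_{\lambda_k}(S_{ad}^k)+O(1)$ by noting that at each super-step the new super-symbol arises from a fresh invocation of $\mathcal{H}$ on the current state, so that the conditional contribution is dominated by the one-step coder-controller entropy. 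This gives $h_\mu(\tilde{\mathcal{C}}_k,Q)\leq\tfrac{1}{k\tau}H_{\lambda_k}(S_{ad}^k)+o(1)\leq R(\mathcal{H})+\varepsilon$ for suitably large $k$. Since the time step $k\tau$ of $\tilde{\mathcal{C}}_k$ tends to infinity, after taking infima and $\limsup_\tau$ one obtains $h_\mu(Q)\leq\limsup_{\tau\to\infty}\inf_{\mathcal{H}}R(\mathcal{H})$.

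The main obstacle I anticipate is the subadditivity estimate in the reverse direction. The Markovian restart of $\tilde{\mathcal{C}}_k$ discards the coder-controller's history at each block of $k\tau$ steps, so admissible super-words of length $n$ in $\tilde{\mathcal{C}}_k$ do not correspond bijectively to admissible $\mathcal{H}$-sequences of length $nk$; moreover, passing from a subsequential bound at $\tau'=k\tau$ to the full $\limsup_{\tau'\to\infty}$ requires a further argument (for instance, showing that $\inf_{\mathcal{C}_{\tau'}}h_\mu(\mathcal{C}_{\tau'},Q)$ is asymptotically non-increasing in $\tau'$, or an interpolation between $k\tau$ and $(k+1)\tau$). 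Establishing the subadditivity estimate will likely require iterating the conditional invariance relation~(\ref{cond_inv}) to track the rescaling $\rho^{-(n-1)k\tau}$ and invoking concavity of $\phi$ to deliver the per-block bound uniformly in $n$.
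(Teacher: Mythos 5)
Your first direction (from invariant partitions to coder-controllers) is exactly the paper's construction: take $S=\{1,\ldots,q\}$, the Markovian coder $\gamma_k$ recording the current partition element, and the one-symbol controller $\delta_k(s_0,\ldots,s_k)=F(P_{s_k})$, whence $R(\mathcal{H})=h_\mu(\mathcal{C}_\tau,Q)$ term by term. That part is fine.

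Your second direction, however, departs from the paper and contains a genuine gap. The paper does \emph{not} perform any time dilation. Given a $(Q,\eta)$-coder-controller $\mathcal{H}=(S,\gamma,\delta,\tau)$, it constructs an invariant $(Q,\eta)$-partition with the \emph{same} $\tau$ by taking $\mathcal{P}=\{P_{s_0}\mid s_0\in S\}$, where $P_{s_0}=\gamma_0^{-1}(s_0)$ is the one-step coding region, and $F(P_{s_0})=\delta_0(s_0)$. The point that makes this work is exactly the structural observation you stated at the outset but then did not exploit: the atoms $A(P_{s_0})$ and $D_s$ in the coder-controller entropy $R(\mathcal{H})$ are, by the paper's definition, built solely from the one-step coding regions. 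Hence $\mathfrak{A}_k(\mathcal{C}_\tau)$ reproduces the collection $\{D_s\}$ and $H_{\lambda_k}(S_{ad}^k)=H_{\rho^{-(k-1)\tau}\mu}(\mathfrak{A}_k(\mathcal{C}_\tau))$ immediately, giving $h_\mu(\mathcal{C}_\tau,Q)=R(\mathcal{H})$ with no asymptotic estimate at all.

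The gap in your time-dilation route is the subadditivity estimate $H_{\rho^{-(n-1)k\tau}\mu}(\mathfrak{A}_n(\tilde{\mathcal{C}}_k))\le n\,H_{\lambda_k}(S_{ad}^k)+O(1)$. The paper itself warns, in the remark surrounding the inclusion (\ref{incl}), that for invariant $(Q,\eta)$-partitions one does \emph{not} have $\mathfrak{A}_{n+m}\subset\mathfrak{A}_n\vee S^{-n\tau}\mathfrak{A}_m$: a suffix of an admissible word need not be admissible, because admissibility is a positive-$\eta$-measure condition on the whole word rather than a pointwise one. That is precisely the mechanism that would be needed for your ``fresh invocation at each super-step'' bound, so the estimate cannot simply be asserted. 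You are right that a second difficulty is the passage from the subsequence $\tau'=k\tau$ to $\limsup_{\tau'\to\infty}$; the paper sidesteps both difficulties by never changing $\tau$ in this direction. In short: keep your opening observation, and use it to replace the dilation argument with the paper's direct identification at each fixed $\tau$.
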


\begin{proof}
(i) First we show that for every invariant $(Q,\eta)$-partition $\mathcal{C}%
_{\tau}=\mathcal{C}_{\tau}(\mathcal{P},F)$ with $\mathcal{P}=\{P_{1}%
,\ldots,P_{q}\}$ there exists a $(Q,\eta)$-coder-controller $\mathcal{H}%
=(S,\gamma,\delta,\tau)$ rendering $Q$ invariant such that $R(\mathcal{H}%
)=h_{\mu}(\mathcal{C}_{\tau})$. In order to construct $\mathcal{H}$ let
$S:=\{1,\ldots,q\}$ and for $k\geq1$ let $S_{ad}^{k}$ be the set of
$\mathcal{C}_{\tau}$-admissible words $a$ of length $k$. Denote by
$\lambda_{k}$ the measure on $S_{ad}^{k}$ given by%
\begin{equation}
\lambda_{k}(a)=\rho^{-(k-1)\tau}\mu(D_{a}),D_{a}\in\mathfrak{A}_{k}%
(\mathcal{C}_{\tau}). \label{nu_k}%
\end{equation}
It follows that%
\[
H_{\lambda_{k}}(S_{ad}^{k})=H_{\rho^{-(k-1)\tau}\mu}(\mathfrak{A}%
_{k}(\mathcal{C}_{\tau})).
\]
The coders $\gamma_{k}:Q^{k+1}\rightarrow S$ are defined by%
\[
\gamma_{k}(x_{0},\ldots,x_{k}):=a_{k}\text{ if }x_{k}\in P_{a_{k}}.
\]
Then the entropy associated to this coder is%
\[
\underset{k\rightarrow\infty}{\lim\sup}\frac{1}{k\tau}H_{\lambda_{k}}%
(S_{ad}^{k})=\underset{k\rightarrow\infty}{\lim\sup}\frac{1}{k\tau}%
H_{\rho^{-(k-1)\tau}\mu}(\mathfrak{A}_{k}(\mathcal{C}_{\tau}))=h_{\mu
}(\mathcal{C}_{\tau}).
\]
The controller is constructed as follows. Each set $D_{a}\in\mathfrak{A}%
_{k}(\mathcal{C}_{\tau})$ is of the form%
\[
D_{a}=A(P_{a_{0}})\cap\cdots\cap S^{-(k-1)\tau}A(P_{a_{k-1}})
\]
with $A(P_{a_{i}})\in\mathfrak{A}(\mathcal{C}_{\tau})$ for all $i$. Upon
receiving the symbol $a_{k-1}$ in addition to the previous symbols
$a_{0},\ldots,a_{k-2}$ the controller finds $a=(a_{0},\ldots,a_{k-1})$ which
indexes an element of $\mathfrak{A}_{k}(\mathcal{C}_{\tau})$. Then the
controller is given by the maps $\delta_{k-1}:S_{ad}^{k}\rightarrow
\Omega^{\tau},$%
\[
\delta_{k-1}(a_{0},\ldots,a_{k-1}):=F(P_{a_{k-1}}).
\]
By the definition this yields for the corresponding solution that
$x_{(k-1)\tau+i}\in Q,i=0,\ldots,\tau$, and hence the constructed
coder-controller renders $Q$ invariant.

Taking the infimum over all invariant $(Q,\eta)$-partitions $\mathcal{C}%
_{\tau}$, then the infimum over all $(Q,\eta)$-coder-controllers
$\mathcal{H}=(S,\gamma,\delta,\tau)$ and, finally, the limit superior for
$\tau\rightarrow\infty$ one obtains%
\[
\underset{\tau\rightarrow\infty}{\lim\sup}\inf_{\mathcal{H}}R(\mathcal{H})\leq
h_{\mu}(Q).
\]
(ii) For the converse inequality it suffices to show that for an arbitrary
$(Q,\eta)$-coder-controller $\mathcal{H}=(S,\gamma,\delta,\tau)$ there is an
invariant $(Q,\eta)$-partition $\mathcal{C}_{\tau}$ with $h_{\mu}%
(\mathcal{C}_{\tau})\allowbreak=R(\mathcal{H})$. For every $s_{0}\in S$
consider the set $P_{s_{0}}$ of all points $x_{0}\in Q$ such that
$s_{0}=\gamma_{0}(x_{0})$. Since $\mathcal{H}$ renders $Q$ invariant it
follows that for $u_{0}=\delta_{0}(s)\in\Omega^{\tau}$ (this does not denote a
Dirac measure!) one has $\varphi(i,x_{0},u_{0})\in Q$ for $i=1,\ldots,\tau$
and $\eta$-a.a. $x_{0}\in P_{s_{0}}$. Since $\mathcal{P}=\{P_{s_{0}}\left\vert
s_{0}\in S\right.  \}$ is a partition modulo $\eta$-null sets of $Q$, one
obtains that $\mathcal{C}_{\tau}=\mathcal{C}_{\tau}(\mathcal{P},F)$ with
$F(P_{s_{0}})=u_{0}=\delta_{0}(s_{0}),s_{0}\in S,$ is an invariant partition
of $Q$. Then
\[
A(P_{s_{0}})=\{u\in\Omega^{\tau}\left\vert \text{ }\varphi(i,x,u)\in Q\text{
for }i=1,\ldots,\tau\text{ and for }\eta\text{-a.a. }x\in P_{s_{0}}\right.
\}\times P_{s_{0}}%
\]
and $\mathfrak{A}(\mathcal{C}_{\tau})=\{A(P_{s_{0}})\left\vert s_{0}\in
S\right.  \}$. For $s=(s_{0},\ldots,s_{k-1})\in S_{ad}^{k}$ we have%
\[
D_{s}=A(P_{s_{0}})\cap S^{-\tau}A(P_{s_{1}})\cap\cdots\cap S^{-(k-1)\tau
}A(P_{s_{k-1}}).
\]
Then the probability measure $\lambda_{k}$ on $S_{ad}^{k}$ satisfies
$\lambda_{k}(s)=\rho^{-(k-1)\tau}\mu(D_{s})$ and hence%
\[
H_{\lambda_{k}}(S_{ad}^{k})=H_{\rho^{-(k-1)\tau}\mu}(\mathfrak{A}%
_{k}(\mathcal{C}_{\tau})).
\]
Thus one obtains for the entropy%
\[
R(\mathcal{H})=\underset{k\rightarrow\infty}{\lim\sup}\frac{1}{k\tau
}H_{\lambda_{k}}(S_{ad}^{k})=\underset{k\rightarrow\infty}{\lim\sup}\frac
{1}{k\tau}H_{\rho^{-(k-1)\tau}\mu}(\mathfrak{A}_{k}(\mathcal{C}_{\tau
}))=h_{\mu}(\mathcal{C}_{\tau}).
\]

\end{proof}

\section{Invariance entropy and relative invariance\label{Section3}}

In this section and in Section \ref{Section4}, we analyze when the metric
invariance entropy of $Q$ is already determined on certain subsets of $Q$. The
analysis is based on the following relative invariance property.

\begin{definition}
Consider for system (\ref{0.1}) subsets $K\subset Q\subset M$. The set $K$ is
called invariant in $Q$, if $x\in K\,$and $f(x,\omega)\not \in K$ for some
$\omega\in\Omega$ implies $f(x,\omega)\not \in Q$.
\end{definition}

Thus a solution $\varphi(\cdot,x,u)$ can leave a set $K$ which is invariant in
$Q$ only if it also leaves $Q.$

For (measurable) subsets $K$ which are invariant in $Q$ and a quasi-stationary
measure $\eta$ on $Q$ we define invariant $(K,\eta)$-partitions as in
Definition \ref{Definition_inv_part} with $Q$ replaced by $K$. Then one can
define the invariance entropy $h_{\mu}(K)$ of $K$ as in Definition
\ref{Definition_main}, again replacing $Q$ by $K$. In the following, objects
associated with invariant $(K,\eta)$-partitions and invariant $(Q,\eta
)$-partitions are denoted with a superscript $K$ and $Q$, respectively.

First we determine relations between invariant partitions of $K$ and $Q$. We
call a map $F$ on $Q$ nonsingular with respect to $\eta$ if $\eta(E)=0$
implies $\eta(F^{-1}E)=0$ for $E\subset Q$.

\begin{lemma}
\label{lemma_restrict_extend}Let $K$ be a closed set which is invariant in a
closed set $Q\subset M$.

(i) Then every invariant $(Q,\eta)$-partition $\mathcal{C}_{\tau}%
^{Q}=\mathcal{C}_{\tau}(\mathcal{P}^{Q},F^{Q})$ induces an invariant
$(K,\eta)$-partition $\mathcal{C}_{\tau}^{K}=\mathcal{C}_{\tau}(\mathcal{P}%
^{K},F^{K})$ given by%
\[
\mathcal{P}^{K}:=\{P\cap K\left\vert P\in\mathcal{P}^{Q}\right.  \}\text{ and
}F^{K}(P\cap K):=F^{Q}(P).
\]

(ii) Assume that there are a finite measurable cover of $Q$ by sets
$V^{1},\ldots,V^{N}$, control functions $v^{1},\ldots,v^{N}\in\mathcal{U}$ and
times $\tau^{1},\ldots,\tau^{N}\in\mathbb{N}$ such that for all $j=1,\ldots,N$
and a.a. $x\in V^{j}$%
\[
\varphi(k,x,v^{j})\in Q\text{ for }k=1,\ldots,\tau^{j}\text{ and }\varphi
(\tau^{j},x,v^{j})\in K
\]
and the maps $\varphi(\tau^{j},\cdot,v^{j})$ on $Q$ are nonsingular with
respect to $\eta$.

Then every invariant $(K,\eta)$-partition $\mathcal{C}_{\tau}^{K}%
=\mathcal{C}_{\tau}(\mathcal{P}^{K},F^{K})$ with $\tau\geq\bar{\tau}%
:=\max\limits_{j=1,\ldots,N}\tau^{j}$ can be extended to an invariant
$(Q,\eta)$-partition $\mathcal{C}_{\tau}^{Q}=\mathcal{C}_{\tau}(\mathcal{P}%
^{Q},F^{Q})$ such that $\#\mathcal{P}^{Q}\leq(1+N)\#\mathcal{P}^{K}$,
$\mathcal{P}^{K}\subset\mathcal{P}^{Q}$ and%
\[
F^{Q}(P)=F^{K}(P)\text{ if }P\in\mathcal{P}^{K}\text{ and }\varphi(\tau
,P^{Q},F^{Q}(P^{Q}))\subset K\text{ for }P^{Q}\in\mathcal{P}^{Q}.
\]

\end{lemma}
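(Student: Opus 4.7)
The approach is this: Part (i) will follow from the definition of ``$K$ invariant in $Q$'' alone, whereas part (ii) requires a two-step construction---refine the disjointified cover by where the auxiliary controls $v^{j}$ land in $\mathcal{P}^{K}$, then concatenate $v^{j}$ with the original $K$-feedback. For part (i), the natural choice $\mathcal{P}^{K}:=\{P\cap K:P\in\mathcal{P}^{Q}\}$ and $F^{K}(P\cap K):=F^{Q}(P)$ works: since $K$ is closed and the elements of $\mathcal{P}^{Q}$ are Borel measurable, so is $\mathcal{P}^{K}$. By $(Q,\eta)$-invariance of $\mathcal{C}_{\tau}^{Q}$, for $\eta$-a.a.\ $x\in P\cap K$ one has $\varphi(k,x,F^{Q}(P))\in Q$ for $k=1,\ldots,\tau$; and since $K$ is invariant in $Q$, the contrapositive of the definition (``$x\in K$ and $f(x,\omega)\in Q$ imply $f(x,\omega)\in K$'') iterates step by step, so the trajectory in fact stays in $K$ throughout.

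For part (ii), I first disjointify the cover by setting $W^{1}:=V^{1}$ and $W^{j}:=V^{j}\setminus(V^{1}\cup\cdots\cup V^{j-1})$, so that $\{W^{j}\setminus K\}_{j=1}^{N}$ is a measurable disjoint cover of $Q\setminus K$. For each $j$ and each $P^{K}\in\mathcal{P}^{K}$, set
\[
P^{Q}_{j,P^{K}}:=\{x\in W^{j}\setminus K:\varphi(\tau^{j},x,v^{j})\in P^{K}\}.
\]
Because $\varphi(\tau^{j},\cdot,v^{j})$ sends $\eta$-a.a.\ $x\in V^{j}$ into $K$ and $\mathcal{P}^{K}$ partitions $K$ modulo $\eta$-null sets, the family $\{P^{Q}_{j,P^{K}}\}_{P^{K}}$ partitions $W^{j}\setminus K$ modulo $\eta$-null sets; here the nonsingularity of $\varphi(\tau^{j},\cdot,v^{j})$ is what lifts the ``$\eta$-a.a.'' statements from $K$ back to $V^{j}$. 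Then $\mathcal{P}^{Q}:=\mathcal{P}^{K}\cup\{P^{Q}_{j,P^{K}}:1\le j\le N,\,P^{K}\in\mathcal{P}^{K}\}$ is a finite measurable partition of $Q$ modulo $\eta$-null sets with at most $(1+N)\#\mathcal{P}^{K}$ elements, and it contains $\mathcal{P}^{K}$ as required. Define $F^{Q}:=F^{K}$ on $\mathcal{P}^{K}$, and on the new pieces concatenate the first $\tau^{j}$ entries of $v^{j}$ with the first $\tau-\tau^{j}$ entries of $F^{K}(P^{K})$; this is well defined because $\tau\ge\bar{\tau}\ge\tau^{j}$ and $F^{K}(P^{K})\in\Omega^{\tau}$.

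The verification splits into two cases. On $P\in\mathcal{P}^{K}$ the $(K,\eta)$-invariance of $F^{K}(P)$ keeps the trajectory in $K\subset Q$ for $k=1,\ldots,\tau$, and in particular $\varphi(\tau,\cdot,F^{Q}(P))\in K$. On $P^{Q}_{j,P^{K}}$ the hypothesis on $v^{j}$ handles the first $\tau^{j}$ steps and delivers $y:=\varphi(\tau^{j},x,v^{j})\in P^{K}\subset K$, after which the $(K,\eta)$-invariance of $F^{K}(P^{K})$ handles the remaining $\tau-\tau^{j}\le\tau$ steps for $\eta$-a.a.\ $y\in P^{K}$, ending in $K$. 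The main obstacle---indeed the only genuine one---is the transfer of this conclusion from ``$\eta$-a.a.\ $y\in P^{K}$'' to ``$\eta$-a.a.\ $x\in P^{Q}_{j,P^{K}}$'': without the nonsingularity of $\varphi(\tau^{j},\cdot,v^{j})$ the $\eta$-null exceptional set in $P^{K}$ on which $F^{K}(P^{K})$ fails could pull back to a set of positive $\eta$-measure in $P^{Q}_{j,P^{K}}$, destroying the $(Q,\eta)$-invariance. The assumption in (ii) is tailored exactly to make this pullback again $\eta$-null, closing the argument.
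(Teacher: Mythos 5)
Your proof is correct and follows essentially the same route as the paper's: disjointify the cover of $Q\setminus K$, refine each piece according to where $\varphi(\tau^{j},\cdot,v^{j})$ lands in $\mathcal{P}^{K}$, concatenate $v^{j}$ with $F^{K}$, and invoke nonsingularity to pull the $\eta$-a.a.\ statements on $K$ back to the preimages in $Q\setminus K$. You also correctly pinpoint nonsingularity as the sole nontrivial ingredient, which is exactly the role it plays in the paper's argument.
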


\begin{proof}
(i) Let $P\in\mathcal{P}^{Q}$. Since $K$ is invariant in $Q$ it follows from
$P\cap K\subset K$ and $\varphi(k,x,F^{Q}(P))\in Q$ for all $k=0,\ldots,\tau$
and $\eta$-a.a. $x\in P$ that%
\[
\varphi(k,x,F^{K}(P\cap K))=\varphi(k,x,F^{Q}(P))\in K\text{ for }%
\eta\text{-a.a. }x\in P.
\]
Thus $\mathcal{C}_{\tau}(\mathcal{P}^{K},F^{K})$ is an invariant $(K,\eta)$-partition.

(ii) Let $\mathcal{C}_{\tau}^{K}=\mathcal{C}_{\tau}(\mathcal{P}^{K},F^{K})$ be
an invariant $K$-partition with $\tau\geq\bar{\tau}$. The cover of $Q$ induces
a finite partition $\mathcal{P}_{1}$ of $Q\setminus K$ such that for every
$P_{1}^{j}\in\mathcal{P}_{1}$ the control $F_{1}(P_{1}^{j}):=(v_{0}^{j}%
,\ldots,v_{\tau^{j}-1}^{j})\in\Omega^{\tau^{j}}$ satisfies for a.a. $x\in
P_{1}^{j}$ one has $\varphi(k,x,F_{1}(P_{1}^{j}))\in Q$ for all $k=0,\ldots
,\tau^{j}$. and $\varphi(\tau^{j},x,F_{1}(P_{1}^{j}))\in K$. In fact, we
obtain a partition of $Q\setminus K$ by defining $P_{1}^{1}:=(Q\setminus
K)\cap V^{1}$ and%
\[
P_{1}^{j}:=\left[  (Q\setminus K)\cap V^{j}\right]  \setminus\bigcup
\nolimits_{i<j}P_{1}^{i}\text{ for }j>1.
\]
Then $P_{1}^{j}\subset V^{j}$ implying for a.a. $x\in P_{1}^{j}$ that
$\varphi(k,x,F_{1}(P_{1}^{j}))\in Q$ for $k=1,\ldots,\tau^{j}$ and
$\varphi(\tau^{j},x,F_{1}(P_{1}^{j}))\in P^{i}\subset K$ for some $P^{i}%
\in\mathcal{P}^{K}$. By nonsingularity with respect to $\eta$ of $\varphi
(\tau^{j},\cdot,v^{j})=\varphi(\tau^{j},\cdot,F_{1}(P_{1}^{j}))$ it follows
that for $\eta$-a.a. $x\in P_{1}^{j}$ there is $P^{i}\in\mathcal{P}^{K}$ such
that for all $k=0,\ldots,\tau$%
\[
\varphi(k,\varphi(\tau^{j},x,F_{1}(P_{1}^{j})),F^{K}(P^{i}))\in K\text{.}%
\]
Define an invariant $Q$-partition $\mathcal{C}_{\tau}^{Q}=\mathcal{C}_{\tau
}(\mathcal{P}^{Q},F^{Q})$ in the following way: The partition consists of the
sets in $\mathcal{P}^{K}$ together with all (nonvoid) sets of the form%
\[
P^{i,j}:=\left\{  x\in P_{1}^{j}\left\vert \varphi(\tau^{j},x,F_{1}(P_{1}%
^{j}))\in P^{i}\right.  \right\}
\]
with feedbacks defined as follows: Let
\[
F^{Q}(P^{i,j})_{k}:=\left\{
\begin{array}
[c]{ccc}%
v_{k}^{j} & \text{for} & k=0,\ldots,\tau^{j}-1\\
u_{k-\tau^{j}}^{i} & \text{for} & k=\tau^{j},\ldots,\tau-1.
\end{array}
\right.
\]
and for $P^{i}\in\mathcal{P}^{K}$ let%
\[
F^{Q}(P^{i}):=F^{K}(P^{i})=(u_{0}^{i},\ldots,u_{\tau-1}^{i}).
\]
This is well defined, since $\tau-\tau^{j}\geq0$ (we use only the first part
of $F^{K}(P^{i})$) and hence $\mathcal{C}_{\tau}(\mathcal{P}^{Q},F^{Q}) $ is
an invariant $Q$-partition with $\#\mathcal{P}^{Q}\leq\#\mathcal{P}%
^{K}+\left(  \#\mathcal{P}^{K}\cdot\#\mathcal{P}_{1}\right)  \leq
(1+N)\#\mathcal{P}^{K}$.
\end{proof}

The following theorem shows when the invariance entropy of $Q$ is already
determined on an subset $K$ that is invariant in $Q$.

\begin{theorem}
\label{TheoremA_alt}Consider control system (\ref{0.1}). Let $K$ be a closed
invariant subset in $Q$, fix a quasi-stationary measure $\eta$ on $Q$ for a
probability measure $\nu$ on the control range $\Omega$ and let $\mu
=\nu^{\mathbb{N}}\times\eta$.

(i) Then the invariance entropy of $K$ is bounded above by the invariance
entropy of $Q$, $h_{\mu}(K)\leq h_{\mu}(Q)$.

(ii) Suppose that there are a finite measurable cover of $Q$ by sets
$V^{1},\ldots,V^{N}$, control functions $v^{1},\ldots,v^{N}\in\mathcal{U}$ and
times $\tau^{1},\ldots,\tau^{N}\in\mathbb{N}$ such that for all $j=1,\ldots,N$
and a.a. $x\in V^{j}$%
\[
\varphi(k,x,v^{j})\in Q\text{ for }k=1,\ldots,\tau^{j}\text{ and }\varphi
(\tau^{j},x,v^{j})\in K,
\]
and the maps $\varphi(\tau^{j},\cdot,v^{j})$ on $Q$ are nonsingular with
respect to $\eta$.

Then $h_{\mu}(Q)=h_{\mu}(K)$ follows.

(iii) If the assumptions in (ii) are satisfied and $K$ is the disjoint union
of sets $K_{1},\ldots,K_{m}$ which are closed and invariant in $Q$, then%
\[
\max\nolimits_{i}h_{\mu}(K_{i})\leq h_{\mu}(Q)\leq h_{\mu}(K_{1}%
)+\cdots+h_{\mu}(K_{m}).
\]

\end{theorem}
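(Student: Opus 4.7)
The plan is to lean on Lemma \ref{lemma_restrict_extend}, which transports invariant partitions between $Q$ and $K$ in both directions, and then to track what happens to the entropy under these transports.

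For part (i), I would apply Lemma \ref{lemma_restrict_extend}(i) to an arbitrary invariant $(Q,\eta)$-partition $\mathcal{C}_\tau^Q$, producing an invariant $(K,\eta)$-partition $\mathcal{C}_\tau^K$ with the same $\tau$ and the same feedbacks, indexed in bijection with $\mathcal{P}^Q$. The central bookkeeping step is to verify that every $(K,\eta,\mathcal{C}_\tau^K)$-admissible word is also $(Q,\eta,\mathcal{C}_\tau^Q)$-admissible, and that the corresponding atoms satisfy a coarseness relation forcing
\[
H_{\rho^{-(n-1)\tau}\mu}\bigl(\mathfrak{A}_n^K\bigr) \le H_{\rho^{-(n-1)\tau}\mu}\bigl(\mathfrak{A}_n^Q\bigr).
\]
Taking $\limsup_n$, the infimum over partitions, and then $\limsup_\tau$ yields $h_\mu(K) \le h_\mu(Q)$.

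For part (ii), by (i) only $h_\mu(Q) \le h_\mu(K)$ remains. Given an invariant $(K,\eta)$-partition $\mathcal{C}_\tau^K$ with $\tau \ge \bar\tau$, Lemma \ref{lemma_restrict_extend}(ii) produces an invariant $(Q,\eta)$-partition $\mathcal{C}_\tau^Q$ with $\mathcal{P}^K \subset \mathcal{P}^Q$ and $\#\mathcal{P}^Q \le (1+N)\,\#\mathcal{P}^K$. Here I expect the \emph{main technical obstacle}: because $K$ is invariant in $Q$, a trajectory that has entered $K$ cannot return to the transitional region $Q \setminus K$ while remaining in $Q$, so every admissible $\mathcal{C}_\tau^Q$-word consists of at most one ``transitional'' symbol from $\mathcal{P}^Q \setminus \mathcal{P}^K$ at position $0$, followed by a run of $\mathcal{P}^K$-symbols. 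Exploiting this structural restriction I would establish
\[
H_{\rho^{-(n-1)\tau}\mu}\bigl(\mathfrak{A}_n^Q\bigr) \le H_{\rho^{-(n-1)\tau}\mu}\bigl(\mathfrak{A}_n^K\bigr) + C(N,\#\mathcal{P}^K),
\]
with $C$ independent of $n$. Dividing by $n\tau$ then kills $C$ in the $\limsup$, the restriction $\tau \ge \bar\tau$ is absorbed by the $\limsup_\tau$ in Definition \ref{Definition_main}, and the desired inequality drops out; without the ``no re-entry'' structural fact, transitional symbols could interleave arbitrarily with $\mathcal{P}^K$-symbols and $C$ would grow with $n$.

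For part (iii), the lower bound $\max_i h_\mu(K_i) \le h_\mu(Q)$ is immediate from (i) applied to each $K_i$. For the upper bound, (ii) gives $h_\mu(Q) = h_\mu(K)$, so it suffices to prove $h_\mu(K) \le \sum_i h_\mu(K_i)$. Given invariant $(K_i,\eta)$-partitions $\mathcal{C}_\tau^{K_i}$, I would take $\mathcal{P}^K := \bigsqcup_i \mathcal{P}^{K_i}$ with feedbacks defined piecewise. Each $K_i$ inherits from $K$ the property of being invariant in $K$, so every admissible $\mathcal{C}_\tau^K$-itinerary stays in a single $K_i$ and $\mathfrak{A}_n^K = \bigsqcup_i \mathfrak{A}_n^{K_i}$. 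Writing $\alpha_i := \rho^{-(n-1)\tau}\mu(\mathcal{A}_n^{K_i})$, a standard entropy-splitting identity
\[
H_{\rho^{-(n-1)\tau}\mu}\bigl(\mathfrak{A}_n^K\bigr) = \sum_{i=1}^{m} \alpha_i\, H_i + H(\{\alpha_i\}),
\]
with mixing term $H(\{\alpha_i\}) \le \log m$ independent of $n$, then delivers the sum bound after dividing by $n\tau$ and passing through $\limsup_n$, infima over partitions, and $\limsup_\tau$.
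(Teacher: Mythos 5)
Your overall architecture matches the paper's: restrict partitions via Lemma \ref{lemma_restrict_extend}(i) for the lower bound, extend them via Lemma \ref{lemma_restrict_extend}(ii) for the upper bound exploiting the no-re-entry structure of a set that is invariant in $Q$, and then decompose into the $K_i$ for part (iii). Parts (ii) and (iii) are sound in outline: you correctly identify that admissibility forces any transitional symbol from $\mathcal{P}^Q\setminus\mathcal{P}^K$ to occur only at position $0$, and the additive constant $C(N,\#\mathcal{P}^K)$ you forecast is exactly of the form $3(N+1)\#\mathcal{P}^K/e$ that appears in the paper; your entropy-splitting detour in (iii) is equivalent (after substituting back the conditional normalization, it reduces to the paper's direct additivity $H_{\rho^{-(n-1)\tau}\mu}(\mathfrak{A}_n^K)=\sum_i H_{\rho^{-(n-1)\tau}\mu}(\mathfrak{A}_n^{K_i})$, making the mixing term unnecessary).

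However, the key step you propose in part (i) is wrong as stated. The atoms of $\mathfrak{A}_n^K$ are intersections $D\cap(\mathcal{U}\times K)$ of (some of) the atoms $D\in\mathfrak{A}_n^Q$, and both families are scored with the \emph{same} unnormalized weight $-\phi(\rho^{-(n-1)\tau}\mu(\cdot))$; this is not a coarsening in the usual sense, and the clean inequality $H_{\rho^{-(n-1)\tau}\mu}(\mathfrak{A}_n^K)\le H_{\rho^{-(n-1)\tau}\mu}(\mathfrak{A}_n^Q)$ can fail. For example, if $\mathfrak{A}_n^Q=\{A\}$ with $\rho^{-(n-1)\tau}\mu(A)=1$, then $H(\mathfrak{A}_n^Q)=-\phi(1)=0$, but $A\cap(\mathcal{U}\times K)$ may have measure $0.1$ giving $H(\mathfrak{A}_n^K)=-\phi(0.1)>0$. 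The obstruction is precisely that $\phi$ reverses direction at $1/e$: shrinking the measure of an atom below $1/e$ \emph{increases} its $-\phi$-contribution. What actually holds, and what you need, is the weaker estimate $H_{\rho^{-(n-1)\tau}\mu}(\mathfrak{A}_n^K)\le H_{\rho^{-(n-1)\tau}\mu}(\mathfrak{A}_n^Q)+3/e$, obtained by splitting $\mathfrak{A}_n^Q$ into the at most three ``big'' atoms with $\rho^{-(n-1)\tau}\mu(D)>1/e$ (treated crudely using $-\phi\le 1/e$) and the remaining ``small'' atoms, where the monotonicity of $\phi$ on $[0,1/e]$ gives $\phi(\rho^{-(n-1)\tau}\mu(D))\le\phi(\rho^{-(n-1)\tau}\mu(D\cap(\mathcal{U}\times K)))$. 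The uniformly bounded error then vanishes after dividing by $n\tau$ and passing to the limit. Without this big/small dichotomy and the accompanying counting argument, the comparison you assert does not go through.
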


\begin{proof}
(i) Let $\mathcal{C}_{\tau}^{Q}=\mathcal{C}_{\tau}(\mathcal{P}^{Q},F^{Q})$ be
an invariant $(Q,\eta)$-partition and consider the induced invariant
$(K,\eta)$-partition $\mathcal{C}_{\tau}^{K}=\mathcal{C}_{\tau}(\mathcal{P}%
^{K},F^{K})$ according to Lemma \ref{lemma_restrict_extend}(i). We will show
that $h_{\mu}(\mathcal{C}_{\tau}^{K})\leq h_{\mu}(\mathcal{C}_{\tau}^{Q}) $.
Then, taking first the infimum over all invariant $Q$-partitions
$\mathcal{C}_{\tau}^{Q}$ and then over all invariant $K$-partitions
$\mathcal{C}_{\tau}^{K}$ and, finally, the limit superior for $\tau
\rightarrow\infty$, one concludes, as claimed, that $h_{\mu}(K)\leq h_{\mu
}(Q)$.

For $P\in\mathcal{P}^{Q}$ consider $v\in\mathcal{U}$ with $\varphi(i,x,v)\in
Q$ for all $i=1,\ldots,\tau$ and $\eta$-a.a. $x\in P$. If $x$ is even in
$P\cap K$, then invariance of $K$ in $Q$ implies that $\varphi(i,x,u)\in K$
for all $i=1,\ldots,\tau$. It follows for all $P_{K}=P\cap K\in\mathcal{P}%
^{K}$ that%
\[
A(P\cap K,\mathcal{C}_{\tau}^{K})=A(P,\mathcal{C}_{\tau}^{Q})\cap\left(
\mathcal{U}\times K\right)
\]
showing that $\mathfrak{A}^{K}=\mathfrak{A}^{Q}\cap\left(  \mathcal{U}\times
K\right)  $.

For a $\mathcal{C}_{\tau}^{K}$-admissible partition sequence corresponding to
a word $a$ abbreviate $A_{a_{i}}^{K}=A(P_{a_{i}}\cap K,\mathcal{C}_{\tau}%
^{K})$ with $P_{a_{i}}\in\mathcal{P}$. Then $(P_{a_{0}},\ldots,P_{a_{n-1}})$
is a $\mathcal{C}_{\tau}^{Q}$-admissible partition sequence and hence a
$\mathcal{C}_{\tau}^{K}$-admissible word $a$ is also $\mathcal{C}_{\tau}^{Q}%
$-admissible. Consider%
\[
D_{a}^{K}=A_{a_{0}}^{K}\cap S^{-\tau}A_{a_{1}}^{K}\cap\cdots\cap
S^{-(n-1)\tau}A_{a_{n-1}}^{K}.
\]
The corresponding $\mathcal{C}_{\tau}^{Q}$-admissible set%
\[
D_{a}^{Q}=A_{a_{0}}^{Q}\cap S^{-\tau}A_{a_{1}}^{Q}\cap\cdots\cap
S^{-(n-1)\tau}A_{a_{n-1}}^{Q}%
\]
satisfies $D_{a}^{Q}\cap\left(  \mathcal{U}\times K\right)  =D_{a}^{K}$, since
$(u,x)\in A_{a_{0}}^{Q}\cap\left(  \mathcal{U}\times K\right)  $ satisfies
$(\theta^{i\tau}u,\varphi(i\tau,x,u)\in A_{a_{i}}^{K},i=0,\ldots,(n-1)\tau$.
This shows that
\begin{equation}
\mathfrak{A}_{n}^{K}\subset\mathfrak{A}_{n}^{Q}\cap(\mathcal{U}\times K)\text{
for all }n, \label{A_K_N}%
\end{equation}
where at the right hand side consists of the elements of $\mathfrak{A}_{n}%
^{Q}$ intersected with $\mathcal{U}\times K$. In order to compute the entropy,
first consider $D\in\mathfrak{A}_{n}^{Q}$ with $\rho^{-(n-1)\tau}\mu
(D)\leq1/e$. Then also $\rho^{-(n-1)\tau}\mu(D\cap\left(  \mathcal{U}\times
K\right)  )\leq1/e$ and it follows that%
\begin{equation}
\phi\left(  \rho^{-(n-1)\tau}\mu(D\right)  )\leq\phi\left(  \rho^{-(n-1)\tau
}\mu(D\cap\left(  \mathcal{U}\times K\right)  )\right)  , \label{small}%
\end{equation}
since $\phi$ is monotonically decreasing on $[0,1/e]$. For every
$n\in\mathbb{N}$ there are at most three sets $D\in\mathfrak{A}_{n}^{Q}$ with
$\rho^{-(n-1)\tau}\mu\left(  D\right)  \geq1/e$, since they are disjoint and
the sum of the measures of four mutually disjoint sets $D\subset
\mathcal{A}_{n}^{Q}\subset S_{Q}^{-(n-1)\tau}(\mathcal{U}\times Q)$ would be
greater than or equal to $4/e>1=\rho^{-(n-1)\tau}\mu(S_{Q}^{-(n-1)\tau
}(\mathcal{U}\times Q))$. Let%
\[
\mathfrak{A}_{n}^{Q,big}:=\{D\in\mathfrak{A}_{n}^{Q}\left\vert \rho
^{-(n-1)\tau}\mu\left(  D\right)  \geq1/e\right.  \}.
\]
Then $\#\mathfrak{A}_{n}^{Q,big}\leq3$ and, using $\phi(x)\geq\phi
(1/e)=-1/e,x\in\lbrack0,1]$, it follows that%
\[
\sum_{D\in\mathfrak{A}_{n}^{Q,big}}\phi\left(  \rho^{-(n-1)\tau}\mu
(D\cap\left(  \mathcal{U}\times K\right)  )\right)  \geq-3/e.
\]
We find, using $\phi(x)\leq0$ and (\ref{small}), (\ref{A_K_N}),%
\begin{align*}
-  &  H_{\rho^{-(n-1)\tau}\mu}\left(  \mathfrak{A}_{n}^{Q}\right) \\
&  =\sum_{D\in\mathfrak{A}_{n}^{Q}\setminus\mathfrak{A}_{n}^{Q,big}}%
\phi\left(  \rho^{-(n-1)\tau}\mu(D)\right)  +\sum_{D\in\mathfrak{A}%
_{n}^{Q,big}}\phi\left(  \rho^{-(n-1)\tau}\mu(D)\right) \\
&  \leq\sum_{D\in\mathfrak{A}_{n}^{Q}\setminus\mathfrak{A}_{n}^{Q,big}}%
\phi\left(  \rho^{-(n-1)\tau}\mu(D\cap\left(  \mathcal{U}\times K\right)
)\right) \\
&  =\sum_{D\in\mathfrak{A}_{n}^{Q}}\phi\left(  \rho^{-(n-1)\tau}\mu
(D\cap\left(  \mathcal{U}\times K\right)  )\right)  -\sum_{D\in\mathfrak{A}%
_{n}^{Q,big}}\phi\left(  \rho^{-(n-1)\tau}\mu(D\cap\left(  \mathcal{U}\times
K\right)  )\right) \\
&  \leq\sum_{D^{K}\in\mathfrak{A}_{n}^{K}}\phi\left(  \rho^{-(n-1)\tau}%
\mu(D^{K})\right)  +3/e\\
&  \leq-H_{\rho^{-(n-1)\tau}\mu}\left(  \mathfrak{A}_{n}^{K}\right)  +3/e.
\end{align*}
This implies%
\begin{align*}
h_{\mu}(\mathcal{C}_{\tau}^{Q})  &  =\underset{n\rightarrow\infty}{\lim\sup
}\frac{1}{n\tau}H_{\rho^{-(n-1)\tau}\mu}(\mathfrak{A}_{n}(\mathcal{C}_{\tau
}^{Q}))\geq\underset{n\rightarrow\infty}{\lim\sup}\frac{1}{n\tau}\left[
H_{\rho^{-(n-1)\tau}\mu}(\mathfrak{A}_{n}(\mathcal{C}_{\tau}^{K}))-3/e\right]
\\
&  =h_{\mu}(\mathcal{C}_{\tau}^{K}).
\end{align*}

(ii) By Lemma \ref{lemma_restrict_extend} we can extend an invariant
$(K,\eta)$-partition $\mathcal{C}_{\tau}^{K}=\mathcal{C}_{\tau}(\mathcal{P}%
^{K},F^{K})$ with $\tau>\bar{\tau}:=\max_{j=1,\ldots,N}\tau^{j}$ to an
invariant $(Q,\eta)$-partition $\mathcal{C}_{\tau}^{Q}=\mathcal{C}_{\tau
}(\mathcal{P}^{Q},F^{Q})$. We claim that $h_{\mu}(\mathcal{C}_{\tau}^{Q})\leq
h_{\mu}(\mathcal{C}_{\tau}^{K})$. Then assertion (ii) will follow, if we take
first the infimum over all invariant $(K,\eta)$-partitions $\mathcal{C}_{\tau
}^{K}$, then the infimum over all invariant $(Q,\eta)$-partitions
$\mathcal{C}_{\tau}^{Q}$ and, finally, let $\tau\rightarrow\infty$.

Recall from Lemma \ref{lemma_restrict_extend} that $\mathcal{P}^{K}%
\subset\mathcal{P}^{Q},~F^{K}(P)=F^{Q}(P)$ for $P\in\mathcal{P}^{K}$ and
$\varphi(\tau,x,F^{Q}(P))\in K$ for $\eta$-a.a. $x\in P$ and all
$P\in\mathcal{P}^{Q}$. It follows for all $P\in\mathcal{P}^{K}\subset
\mathcal{P}^{Q}$ that%
\[
A^{K}(P)=A^{Q}(P)\text{ and hence }\mathfrak{A}^{K}:=\mathfrak{A}%
(\mathcal{C}_{\tau}^{K})\subset\mathfrak{A}^{Q}:=\mathfrak{A}(\mathcal{C}%
_{\tau}^{Q}).
\]
Let $(P_{0},\ldots,P_{n-1})$ be a $\mathcal{C}_{\tau}^{Q}$-admissible
partition sequence in $\mathcal{P}^{Q}$. Since $K$ is invariant in $Q$, it
follows that $P_{i}\subset K$ for all $i=1,\ldots,n-1$, and hence for every
$k\geq\tau$ the controls are given by feedbacks $F^{K}$ keeping the system in
$K$. Hence $(P_{1},\ldots,P_{n-1})$ is a $\mathcal{C}^{K}$-admissible sequence
in $\mathcal{P}^{K}$.

Together, this implies for a $\mathcal{C}_{\tau}^{Q}$-admissible word
$a^{n+1}=[a_{0},a_{1},\ldots,a_{n}]$ of length $n+1$ and for the elements%
\[
D_{a^{n+1}}=A_{a_{0}}\cap S^{-\tau}A_{a_{1}}\cap\cdots\cap S^{-n\tau}A_{a_{n}%
}\in\mathfrak{A}_{n+1}^{Q},
\]
that $a^{n}:=[a_{1},\ldots,a_{n}]$ is a $\mathcal{C}^{K}$-admissible word with%
\[
D_{a^{n}}=A_{a_{1}}\cap\cdots\cap S^{-(n-1)\tau}A_{a_{n}}\in\mathfrak{A}%
_{n}^{K}.
\]
Hence $D_{a^{n+1}}=A_{a_{0}}\cap S^{-\tau}D_{a^{n}}$ with $A_{a_{0}}%
\in\mathfrak{A}^{Q}$ and we find%
\begin{equation}
\mu\left(  D_{a^{n+1}}\right)  =\mu\left(  A_{a_{0}}\cap S^{-\tau}D_{a^{n}%
}\right)  \leq\mu\left(  S_{Q}^{-\tau}D_{a^{n}}\right)  =\rho^{\tau}\mu\left(
D_{a^{n}}\right)  . \label{n+1}%
\end{equation}
Define%
\begin{align*}
\mathfrak{A}_{n}^{K,big}  &  :=\{D_{a^{n}}\in\mathfrak{A}_{n}^{K}\left\vert
\rho^{-(n-1)\tau}\mu\left(  D_{a^{n}}\right)  \geq1/e\right.  \},\\
\mathfrak{A}_{n+1}^{Q,big}  &  :=\{D_{a^{n+1}}=A_{a_{0}}\cap S^{-\tau}%
D_{a^{n}}\in\mathfrak{A}_{n+1}^{Q}\left\vert A_{a_{0}}\in\mathfrak{A}%
^{Q}\text{ and }D_{a^{n}}\in\mathfrak{A}_{n}^{K,big}\right.  \}.
\end{align*}
Then, as above, $\#\mathfrak{A}_{n}^{K,big}\leq3$. Furthermore, by Lemma
\ref{lemma_restrict_extend}, the number of elements of $\mathfrak{A}%
_{n+1}^{Q,big}$ is bounded, independently of $n$, by
\[
\#\mathfrak{A}_{n+1}^{Q,big}\leq3\cdot\#\mathfrak{A}^{Q}\leq3(N+1)\cdot
\#\mathcal{P}^{K}.
\]
For $D_{a^{n+1}}=A_{a_{0}}\cap S^{-\tau}D_{a^{n}}\in\mathfrak{A}_{n+1}^{Q}$
with $D_{a^{n}}\in\mathfrak{A}_{n}^{K}\setminus\mathfrak{A}_{n}^{K,big}$, it
follows from (\ref{n+1}) that%
\[
\rho^{-n\tau}\mu\left(  D_{a^{n+1}}\right)  \leq\rho^{-n\tau}\rho^{\tau}%
\mu\left(  D_{a^{n}}\right)  =\rho^{-(n-1)\tau}\mu\left(  D_{a^{n}}\right)
\leq1/e,
\]
and hence, using monotonicity of $\phi$ on $[0,1/e]$, for all $D_{a^{n+1}}%
\in\mathfrak{A}_{n+1}^{Q}\setminus\mathfrak{A}_{n+1}^{Q,big}$%
\[
\phi\left(  \rho^{-n\tau}\mu\left(  D_{a^{n+1}}\right)  \right)  \geq
\phi\left(  \rho^{-(n-1)\tau}\mu(D_{a^{n}})\right)  .
\]
It follows that%
\begin{align*}
H_{\rho^{-n\tau}\mu}\left(  \mathfrak{A}_{n+1}^{Q}\right)   &  =-\sum
_{D\in\mathfrak{A}_{n+1}^{Q}}\phi\left(  \rho^{-n\tau}\mu(D)\right) \\
&  \leq-\sum_{D\in\mathfrak{A}_{n+1}^{Q}\setminus\mathfrak{A}_{n+1}^{Q,big}%
}\phi\left(  \rho^{-n\tau}\mu(D)\right)  +3(N+1)\cdot\#\mathcal{P}^{K}/e\\
&  \leq-\sum_{D\in\mathfrak{A}_{n}^{K}\setminus\mathfrak{A}_{n}^{K,big}}%
\phi\left(  \rho^{-(n-1)\tau}\mu(D)\right)  +3(N+1)\cdot\#\mathcal{P}^{K}/e\\
&  \leq-\sum_{D\in\mathfrak{A}_{n}^{K}}\phi\left(  \rho^{-(n-1)\tau}%
\mu(D)\right)  +3(N+1)\cdot\#\mathcal{P}^{K}/e\\
&  =H_{\rho^{-(n-1)\tau}\mu}\left(  \mathfrak{A}_{n}^{K}\right)
+3(N+1)\cdot\#\mathcal{P}^{K}/e.
\end{align*}
We conclude that, as claimed,%
\begin{align*}
h_{\mu}(\mathcal{C}_{\tau}^{Q})  &  =\underset{n\rightarrow\infty}{\lim\sup
}\frac{1}{n\tau}H_{\rho^{-(n-1)\tau}\mu}(\mathfrak{A}_{n}(\mathcal{C}_{\tau
}^{Q}))\\
&  =\underset{n\rightarrow\infty}{\lim\sup}\frac{1}{n\tau}H_{\rho^{-n\tau}\mu
}(\mathfrak{A}_{n+1}(\mathcal{C}_{\tau}^{Q}))\\
&  \leq\underset{n\rightarrow\infty}{\lim\sup}\frac{1}{n\tau}\left[
H_{\rho^{-(n-1)\tau}\mu}(\mathfrak{A}_{n}(\mathcal{C}_{\tau}^{K}%
))+3(N+1)\cdot\#\mathcal{P}^{K}/e\right]  =h_{\mu}\left(  \mathcal{C}_{\tau
}^{K}\right)  .
\end{align*}

(iii) The first assertion follows by (i) using that each $K_{i}$ is invariant
in $Q$. For the second inequality use that by (ii) $h_{\mu}(Q)=h_{\mu}(K)$.
Take invariant partitions $\mathcal{C}_{\tau}^{K_{i}}=\mathcal{C}_{\tau
}(\mathcal{P}^{i},F^{i})$ of $K_{i}$ for each $i$. Then $\mathcal{C}_{\tau
}^{K}=\mathcal{C}_{\tau}(\mathcal{P}^{K},F^{K})$ defined by $\mathcal{P}%
^{K}=\bigcup\mathcal{P}^{i},F_{\left\vert \mathcal{P}^{i}\right.  }^{K}%
=F_{i},i=1,\ldots,m$, forms an invariant partition of $K$ and one finds%
\[
\mathfrak{A}_{n}^{K}(\mathcal{C}_{\tau})=%
{\displaystyle\bigcup\limits_{i=1}^{m}}
\mathfrak{A}_{n}^{K_{i}}(\mathcal{C}_{\tau})\text{ and }H_{\rho^{-(n-1)\tau
}\mu}(\mathfrak{A}_{n}(\mathcal{C}_{\tau}^{K}))=\sum_{i=1}^{m}H_{\rho
^{-(n-1)\tau}\mu}(\mathfrak{A}_{n}(\mathcal{C}_{\tau}^{K_{i}})).
\]
This implies that%
\begin{align*}
h_{\mu}\left(  \mathcal{C}_{\tau}^{K}\right)   &  \leq\underset{n\rightarrow
\infty}{\lim\sup}\frac{1}{n\tau}H_{\rho^{-(n-1)\tau}\mu}(\mathfrak{A}%
_{n}(\mathcal{C}_{\tau}^{K}))\\
&  =\underset{n\rightarrow\infty}{\lim\sup}\frac{1}{n\tau}\sum_{i=1}%
^{m}H_{\rho^{-(n-1)\tau}\mu}(\mathfrak{A}_{n}(\mathcal{C}_{\tau}^{K_{i}}%
))\leq\sum_{i=1}^{m}h_{\mu}\left(  \mathfrak{A}_{n}(\mathcal{C}_{\tau}^{K_{i}%
})\right)  .
\end{align*}
Now take the infimum over all invariant partitions $\mathcal{C}_{\tau}^{K_{j}%
}$ of the $K_{j}$, then over all invariant partitions of $K$ and, finally, the
limit superior for $\tau\rightarrow\infty$.
\end{proof}

\begin{remark}
\label{vanDoornPollett}If the assumptions of Theorem \ref{TheoremA_alt}(ii)
are satisfied, one might conjecture that the support of any quasi-stationary
measure is contained in $K$. However, this cannot be expected as seen from the
description of all quasi-stationary measures for finite state spaces given in
van Doorn and Pollett \cite[Theorem 4.2]{DoorP09}. A simple example is given
in Bena\"{\i}m, Cloez and Panloup \cite[Example 3.5]{BenaCP16}.
\end{remark}

For a quasi-stationary measure $\eta$ and a closed invariant subset $Q_{1}$ in
$Q$ with $\eta(Q_{1})>0$ let $\mu_{Q_{1}}:=\nu^{\mathbb{N}}\times\eta_{Q_{1}}%
$, where $\eta_{Q_{1}}(\cdot)=\eta(\cdot\cap Q_{1})/\eta(Q_{1})$ is the
conditional measure on $Q_{1}$.

\begin{corollary}
\label{Corollary_average}Consider control system (\ref{0.1}) and fix a
quasi-stationary measure $\eta$ on $Q$ for a probability measure $\nu$ on the
control range $\Omega$ and let $\mu=\nu^{\mathbb{N}}\times\eta$. Suppose that
$Q$ is the disjoint union of closed pairwise disjoint sets $Q_{i}%
,i=1,\ldots,m$, which are invariant in $Q$ with $\eta(Q_{i})>0$ for all $i$.
Then all $\eta_{Q_{i}}$ are quasi-stationary for $Q_{i}$ with the same
constant as $\eta$ and
\begin{equation}
\max_{i=1,\ldots,m}\left\{  \mu(Q_{i})h_{\mu_{Q_{1}}}(Q_{1})\right\}  \leq
h_{\mu}(Q)\leq\eta(Q_{1})h_{\mu_{Q_{1}}}(Q_{1})+\cdots+\eta(Q_{m}%
)h_{\mu_{Q_{m}}}(Q_{m}). \label{disjoint}%
\end{equation}

\end{corollary}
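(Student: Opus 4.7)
The plan has three parts: first verify that each restriction $\eta_{Q_i}$ is quasi-stationary on $Q_i$ with the same constant $\rho$; then establish a scaling formula relating $h_\mu(Q_i)$ and $h_{\mu_{Q_i}}(Q_i)$; and finally invoke Theorem \ref{TheoremA_alt}(iii) with $K=Q$.

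For the first step, fix $A\in\mathcal{B}(Q_i)$ and note that for $j\neq i$ and $x\in Q_j$ we have $p(x,A)=0$: indeed, any $\omega$ with $f(x,\omega)\in A\subset Q_i\subset Q$ would, by invariance of $Q_j$ in $Q$, force $f(x,\omega)\in Q_j$, contradicting $Q_i\cap Q_j=\emptyset$. Hence the quasi-stationarity identity for $\eta$ collapses to $\rho\eta(A)=\int_{Q_i}p(x,A)\,\eta(dx)$, and dividing by $\eta(Q_i)$ gives $\rho\eta_{Q_i}(A)=\int_{Q_i}p(x,A)\,\eta_{Q_i}(dx)$.

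The main technical step is to prove $h_\mu(Q_i)=\eta(Q_i)\,h_{\mu_{Q_i}}(Q_i)$. Because $\eta$-null and $\eta_{Q_i}$-null subsets of $Q_i$ coincide (modulo the positive factor $\eta(Q_i)$), invariant $(Q_i,\eta)$-partitions and invariant $(Q_i,\eta_{Q_i})$-partitions are the same objects, and likewise the admissible words and the sets $D_a\subset\mathcal U\times Q_i$ match. On $\mathcal U\times Q_i$ one has $\mu=\eta(Q_i)\,\mu_{Q_i}$, so for every $D_a\in\mathfrak A_n$, writing $c=\eta(Q_i)$ and $t_a=\rho^{-(n-1)\tau}\mu_{Q_i}(D_a)$, the identity $\phi(ct)=t\phi(c)+c\phi(t)$ yields
\[
H_{\rho^{-(n-1)\tau}\mu}(\mathfrak A_n)=-\phi(c)\sum_{D_a}t_a+c\,H_{\rho^{-(n-1)\tau}\mu_{Q_i}}(\mathfrak A_n).
\]
Since $\sum t_a=\rho^{-(n-1)\tau}\mu_{Q_i}(\mathcal A_n)\leq 1$, the first term is bounded independently of $n$; dividing by $n\tau$ and taking $\limsup$ first in $n$ and then over invariant partitions (and then $\tau\to\infty$) gives $h_\mu(Q_i)=\eta(Q_i)\,h_{\mu_{Q_i}}(Q_i)$.

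Finally, I apply Theorem \ref{TheoremA_alt}(iii) with $K:=Q=\bigsqcup_i Q_i$. The hypothesis of part (ii) is satisfied trivially by taking $N=1$, $V^1=Q$, any control $v^1$, and $\tau^1=0$ (admissible since $0\in\mathbb N$ here), so that the reachability/nonsingularity condition becomes vacuous. Theorem \ref{TheoremA_alt}(iii) therefore delivers $\max_i h_\mu(Q_i)\leq h_\mu(Q)\leq\sum_i h_\mu(Q_i)$, which upon substituting the scaling formula from the previous step yields (\ref{disjoint}) (with the evident reading that the maximum is over $\eta(Q_i)h_{\mu_{Q_i}}(Q_i)$). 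The only delicate point is the scaling computation; everything else is a direct transcription from earlier results.
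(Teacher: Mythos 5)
Your proposal is correct and follows essentially the same route as the paper's (rather terse) proof: verify $\eta_{Q_i}$ is quasi-stationary with the same $\rho$ using that invariance of $Q_j$ in $Q$ forces $p(x,A)=0$ for $x\in Q_j$, $A\subset Q_i$; establish $h_\mu(Q_i)=\eta(Q_i)h_{\mu_{Q_i}}(Q_i)$; and apply Theorem~\ref{TheoremA_alt} with $K=Q$. You additionally spell out the scaling formula (which the paper only asserts as ``one easily sees'') via the identity $\phi(ct)=t\phi(c)+c\phi(t)$ and the boundedness of $\sum t_a\leq1$, and you justify the applicability of part~(iii) by the degenerate choice $N=1$, $\tau^1=0$, $V^1=Q$, which is legitimate since $0\in\mathbb{N}$ in this paper and makes the cover hypothesis vacuous when $Q\setminus K=\emptyset$.
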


\begin{proof}
Abbreviate $\eta_{i}:=\eta_{Q_{i}}$ for all $i$. One has the convex
combination $\eta=\eta(Q_{1})\eta_{1}+\cdots+\eta(Q_{m})\eta_{m}(Q_{m}) $
implying for all $k\in\mathbb{N}$%
\[
\rho^{-k}\eta=\eta(Q_{1})\rho^{-k}\eta_{1}+\cdots+\eta(Q_{m})\rho^{-k}\eta
_{m}(Q_{m}).
\]
For $A\subset Q_{i}$, invariance of $Q_{j},j\not =i$, in $Q$ implies%
\begin{align*}
\rho\eta_{i}(A)  &  =\rho\frac{\eta(A)}{\eta(Q_{i})}=\frac{1}{\eta(Q_{i})}%
\int_{Q}p(x,A)\eta(dx)=\frac{1}{\eta(Q_{i})}\int_{Q_{i}}p(x,A)\eta(dx)\\
&  =\int_{Q_{i}}p(x,A)\eta_{i}(dx).
\end{align*}
Hence $\eta_{i}$ is quasi-stationary for $Q_{i}$ with the same constant $\rho$
as $\eta$. Then one easily sees that $h_{\mu}(Q_{i})=\eta(Q_{i})h_{\mu_{i}%
}(Q_{i})$. Now the first inequality in (\ref{disjoint}) follows from Theorem
\ref{TheoremA_alt}(i) and the second inequality follows from the arguments in
the proof of \ref{TheoremA_alt}(iii) if one notes that here $K=Q$.
\end{proof}

For the incremental invariance entropy we can only show the analog of Theorem
\ref{TheoremA_alt}(i). Here the arguments are a bit more involved.

\begin{theorem}
\label{TheoremA}Consider control system (\ref{0.1}). Let $K$ be a closed
invariant subset in $Q$, fix a quasi-stationary measure $\eta$ on $Q$ for a
probability measure $\nu$ on the control range $\Omega$ and let $\mu
=\nu^{\mathbb{N}}\times\eta$.

Then the incremental invariance entropy of $K$ is bounded above by the
incremental invariance entropy of $Q$, $h_{\mu}^{inc}(K)\leq h_{\mu}^{inc}(Q)$.
\end{theorem}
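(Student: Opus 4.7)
The plan is to follow the template of Theorem~\ref{TheoremA_alt}(i). Given an invariant $(Q,\eta)$-partition $\mathcal{C}_\tau^Q=\mathcal{C}_\tau(\mathcal{P}^Q,F^Q)$, I invoke Lemma~\ref{lemma_restrict_extend}(i) to produce the induced invariant $(K,\eta)$-partition $\mathcal{C}_\tau^K=\mathcal{C}_\tau(\mathcal{P}^K,F^K)$ with $\mathcal{P}^K=\{P\cap K:P\in\mathcal{P}^Q\}$ and $F^K(P\cap K)=F^Q(P)$, and then establish the pointwise bound $h_\mu^{inc}(\mathcal{C}_\tau^K,K)\le h_\mu^{inc}(\mathcal{C}_\tau^Q,Q)$. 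Taking the infimum over $\mathcal{C}_\tau^Q$ and then $\limsup$ in $\tau$ will yield the theorem, since induced partitions form a subclass of all invariant $(K,\eta)$-partitions.

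First I would reuse the identifications from the proof of Theorem~\ref{TheoremA_alt}(i): one has $A(P\cap K,\eta)=A(P,\eta)\cap(\mathcal{U}\times K)$, every $K$-admissible word is also $Q$-admissible, and $D_a^K=D_a^Q\cap(\mathcal{U}\times K)$. Because $K$ is invariant in $Q$, any $(u,x)\in D_a^Q\cap(\mathcal{U}\times K)$ satisfies $\varphi(i\tau,x,u)\in K$ for every $i$ up to $n$, so the intersection with each $S^{-i\tau}(\mathcal{U}\times K)$ is automatic. Consequently, for each $D^K\in\mathfrak{A}_j^K$ and its corresponding $D^Q\in\mathfrak{A}_j^Q$, we have $\mathcal{A}_{j+1}^K(D^K)=\mathcal{A}_{j+1}^Q(D^Q)\cap(\mathcal{U}\times K)$ modulo $\mu$-null sets, and any $Q$-admissible extension that fails $K$-admissibility contributes only a null set on the $K$-side.

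The heart of the argument is a termwise comparison of the conditional entropies. Fix $j$ and a corresponding pair $(D^K,D^Q)$; write $a_i=\rho^{-j\tau}\mu(E_i^Q)$ for the $Q$-admissible extensions and $b_i=\rho^{-j\tau}\mu(E_i^Q\cap(\mathcal{U}\times K))\le a_i$, and put $A=\sum_ia_i$, $B=\sum_ib_i$. The contribution of $D^Q$ to $H_{\rho^{-j\tau}\mu}(\mathfrak{A}_{j+1}^Q\mid\mathfrak{A}_j^{Q,j+1})$ is $g(\vec{a}):=\sum_i a_i\log(A/a_i)=A\log A-\sum_i a_i\log a_i$, and that of $D^K$ is $g(\vec{b})$. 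Since $\partial g/\partial c_j=\log(C/c_j)\ge 0$ (with $C=\sum_i c_i\ge c_j$), the function $g$ is coordinate-wise non-decreasing, so $b_i\le a_i$ for all $i$ forces $g(\vec{b})\le g(\vec{a})$. Summing over $D^K\in\mathfrak{A}_j^K$, and noting that $D^Q$'s without $K$-counterpart add only non-negative terms to the $Q$-side, gives $H_{\rho^{-j\tau}\mu}(\mathfrak{A}_{j+1}^K\mid\mathfrak{A}_j^{K,j+1})\le H_{\rho^{-j\tau}\mu}(\mathfrak{A}_{j+1}^Q\mid\mathfrak{A}_j^{Q,j+1})$ for every $j$.

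Averaging over $j=0,\ldots,n-1$, taking $\limsup$ in $n$, then the infimum over $\mathcal{C}_\tau^Q$, and finally $\limsup$ in $\tau$, completes the argument. The main obstacle is the coordinate-wise monotonicity of $g$; this replaces the big-set/small-set dichotomy used for the unconditional entropies in Theorem~\ref{TheoremA_alt}(i). No $O(1)$ error term is needed here because the conditional entropy already renormalizes each block by $\rho^{-j\tau}\mu(\mathcal{A}_{j+1}(D))$, so the non-monotonicity of $\phi$ on $[0,1]$ does not obstruct a direct termwise comparison.
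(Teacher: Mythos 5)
Your proposal is correct and proves the same per-block inequality as the paper, but by a different elementary mechanism. The paper fixes $D\cap(\mathcal{U}\times K)\in\mathfrak{A}_n^K$, splits $\alpha=\mu(\mathcal{A}_{n+1}^Q(D))$ as $\alpha_1+\alpha_2$ (the $K$-part and its complement), and for each $E\in\mathfrak{A}_{n+1}^K$ invokes convexity of $\phi(x)=x\log x$ together with $\phi\le 0$ to get
\[
\frac{\alpha_1}{\alpha}\,\phi\!\left(\frac{\mu(D\cap(\mathcal{U}\times K)\cap E)}{\alpha_1}\right)\ \ge\ \phi\!\left(\frac{\mu(D\cap E)}{\alpha}\right),
\]
then sums over $E$; this is essentially the log-sum inequality plus nonnegativity of the block entropy. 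You instead compare the block contributions directly via coordinate-wise monotonicity of $g(\vec{c})=C\log C-\sum_i c_i\log c_i$, using $\partial g/\partial c_j=\log(C/c_j)\ge 0$, and conclude $g(\vec{b})\le g(\vec{a})$ from $b_i\le a_i$. The two are logically equivalent reformulations of the same entropy inequality; yours bypasses the explicit convexity decomposition into $K$ and $Q\setminus K$ parts and is arguably more transparent because it reduces the key step to a one-line derivative computation. Both approaches equally rely on the structural identifications already established for Theorem~\ref{TheoremA_alt}(i) --- $D_a^K=D_a^Q\cap(\mathcal{U}\times K)$, $K$-admissible words are $Q$-admissible, and the induced-block identification modulo $\mu$-null sets --- and on Lemma~\ref{lemma_restrict_extend}(i). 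Your closing observation that the conditional (incremental) entropy self-normalizes each block and therefore needs no $O(1)$ correction, in contrast to the $3/e$ term in Theorem~\ref{TheoremA_alt}(i), correctly identifies the real simplification afforded by the incremental formulation.
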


\begin{proof}
Let $\mathcal{C}_{\tau}^{Q}=\mathcal{C}_{\tau}(\mathcal{P}^{Q},F^{Q})$ \ be an
invariant $(Q,\eta)$-partition and consider the induced invariant $(K,\eta
)$-partition $\mathcal{C}_{\tau}^{K}=\mathcal{C}_{\tau}(\mathcal{P}^{K}%
,F^{K})$ according to Lemma \ref{lemma_restrict_extend}. We will show that
$h_{\mu}^{inc}(\mathcal{C}_{\tau}^{K})\leq h_{\mu}^{inc}(\mathcal{C}_{\tau
}^{Q})$. Then the assertion will follow.

The arguments used to prove formula (\ref{A_K_N}) also show that for a set
$D\cap\left(  \mathcal{U}\times K\right)  \in\mathfrak{A}_{n}^{K}%
=\mathfrak{A}_{n}(\mathcal{C}_{\tau}^{K})$ one obtains (cf. (\ref{A(D)}))%
\[
\mathfrak{A}_{n+1}^{K}(D\cap\left(  \mathcal{U}\times K\right)  )\subset
\mathfrak{A}_{n+1}^{Q}(D)\cap\left(  \mathcal{U}\times K\right)  ,
\]
and hence the unions satisfy%
\[
\mathcal{A}_{n+1}^{K}(D\cap\left(  \mathcal{U}\times K\right)  )\subset
\mathcal{A}_{n+1}^{Q}(D)\cap\left(  \mathcal{U}\times K\right)  \subset
\mathcal{A}_{n+1}^{Q}(D).
\]
Next we consider the conditional entropy (cf. (\ref{condition_fb0})),%
\begin{align}
&  -H_{\rho^{-n\tau}\mu}\left(  \mathfrak{A}_{n+1}^{K}\left\vert
\mathfrak{A}_{n}^{K,n+1})\right.  \right) \label{E1}\\
&  =\rho^{-n\tau}\sum_{D\cap\left(  \mathcal{U}\times K\right)  \in
\mathfrak{A}_{n}^{K,n+1}}\mu(\mathcal{A}_{n+1}^{K}(D\cap\left(  \mathcal{U}%
\times K\right)  )\sum_{E\in\mathfrak{A}_{n+1}^{K,}}\phi\left(  \frac
{\mu(D\cap\left(  \mathcal{U}\times K\right)  \cap E)}{\mu(\mathcal{A}%
_{n+1}^{K}(D\cap\left(  \mathcal{U}\times K\right)  ))}\right)  .\nonumber
\end{align}
Fix an element $D\cap\left(  \mathcal{U}\times K\right)  \in\mathfrak{A}%
_{n}^{K}$ and let
\begin{align*}
\alpha_{1}  &  :=\mu\left(  \mathcal{A}_{n+1}^{K}(D\cap\left(  \mathcal{U}%
\times K\right)  )\right)  ,~\alpha_{2}:=\mu\left(  \mathcal{A}_{n+1}%
^{Q}(D)\setminus\mathcal{A}_{n+1}^{K}(D\cap\left(  \mathcal{U}\times K\right)
)\right)  ,\\
\alpha &  :=\alpha_{1}+\alpha_{2}=\mu\left(  \mathcal{A}_{n+1}^{Q}(D)\right)
.
\end{align*}
Observe that convexity of $\phi$ implies%
\[
\phi\left(  \frac{\alpha_{1}}{\alpha}\mu_{1}+\frac{\alpha_{2}}{\alpha}\mu
_{2}\right)  \leq\frac{\alpha_{1}}{\alpha}\phi(\mu_{1})+\frac{\alpha_{2}%
}{\alpha}\phi(\mu_{2})\text{ for }\mu_{1},\mu_{2}\in\lbrack0,1].
\]
This, together with $\phi(x)\leq0$, shows that for $E\in\mathfrak{A}_{n+1}%
^{K}$
\begin{align*}
&  \frac{\alpha_{1}}{\alpha}\phi\left(  \frac{\mu(D\cap\left(  \mathcal{U}%
\times K\right)  \cap E)}{\mu\left(  \mathcal{A}_{n+1}^{K}(D\cap\left(
\mathcal{U}\times K\right)  )\right)  }\right) \\
&  \geq\frac{\alpha_{1}}{\alpha}\phi\left(  \frac{\mu(D\cap\left(
\mathcal{U}\times K\right)  \cap E)}{\mu(\mathcal{A}_{n+1}^{K}(D\cap\left(
\mathcal{U}\times K\right)  ))}\right)  +\frac{\alpha_{2}}{\alpha}\phi\left(
\frac{\mu(D\cap\left(  \mathcal{U}\times\left(  Q\setminus K\right)  \right)
\cap E)}{\mu\left(  \mathcal{A}_{n+1}^{Q}(D)\setminus\mathcal{A}_{n+1}%
^{K}(D\cap\left(  \mathcal{U}\times K\right)  )\right)  }\right) \\
&  \geq\phi\left(  \frac{\alpha_{1}}{\alpha}\frac{\mu(D\cap\left(
\mathcal{U}\times K\right)  \cap E)}{\mu(\mathcal{A}_{n+1}^{K}(D\cap\left(
\mathcal{U}\times K\right)  ))}+\frac{\alpha_{2}}{\alpha}\frac{\mu
(D\cap\left(  \mathcal{U}\times\left(  Q\setminus K\right)  \right)  \cap
E)}{\mu\left(  \mathcal{A}_{n+1}^{Q}(D)\setminus\mathcal{A}_{n+1}^{K}%
(D\cap\left(  \mathcal{U}\times K\right)  )\right)  }\right)
\end{align*}%
\[
=\phi\left(  \frac{\mu(D\cap\left(  \mathcal{U}\times K\right)  \cap E)}%
{\mu\left(  \mathcal{A}_{n+1}^{Q}(D)\right)  }+\frac{\mu(D\cap\left(
\mathcal{U}\times\left(  Q\setminus K\right)  \right)  \cap E)}{\mu\left(
\mathcal{A}_{n+1}^{Q}(D)\right)  }\right)  =\phi\left(  \frac{\mu(D\cap
E)}{\mu(\mathcal{A}_{n+1}^{Q}(D)}\right)  .
\]
With (\ref{E1}) it follows that%
\[
-H_{\rho^{-n\tau}\mu}\left(  \mathfrak{A}_{n+1}^{K}\left\vert \mathfrak{A}%
_{n}^{K})\right.  \right)  \geq\rho^{-n\tau}\sum_{D\cap\left(  \mathcal{U}%
\times K\right)  \in\mathfrak{A}_{n}^{K}}\mu(\mathcal{A}_{n+1}^{Q}%
(D))\sum_{E\in\mathfrak{A}_{n+1}^{K}}\phi\left(  \frac{\mu(D\cap E)}%
{\mu(\mathcal{A}_{n+1}^{Q}(D)}\right)  .
\]
We may add further negative summands (corresponding to elements $D\in
\mathfrak{A}_{n}^{Q}\setminus\mathfrak{A}_{n}^{K}$ and $E\in\mathfrak{A}%
_{n+1}^{Q}\setminus\mathfrak{A}_{n+1}^{K}$) and estimate this by%
\[
\geq\rho^{-n\tau}\sum_{D\in\mathfrak{A}_{n}^{Q}}\mu\left(  \mathcal{A}%
_{n+1}^{Q}(D)\right)  \sum_{E\in\mathfrak{A}_{n+1}^{Q}}\phi\left(  \frac
{\mu(D\cap E)}{\mu(\mathcal{A}_{n+1}^{Q}(D)}\right)  =-H_{\rho^{-n\tau}\mu
}\left(  \mathfrak{A}_{n+1}^{Q}\left\vert \mathfrak{A}_{n}^{Q,n+1})\right.
\right)  .
\]
Next take the sum over the conditional entropies as specified in Definition
\ref{Def_cond}, divide by $n$ and take the limit for $n\rightarrow\infty$.
This shows, as claimed, that $h_{\mu}^{inc}(\mathcal{C}_{\tau}^{K})\leq
h_{\mu}^{inc}(\mathcal{C}_{\tau}^{Q})$. Now take the infimum over all
invariant $(Q,\eta)$-partitions $\mathcal{C}_{\tau}^{Q}$, the infimum over all
invariant $(K,\eta)$-partitions $\mathcal{C}_{\tau}^{K}\,$and, finally, let
$\tau\rightarrow\infty$. This concludes the proof.
\end{proof}

\section{Invariant $W$-control sets\label{Section4}}

In this section, we will describe certain subsets of complete approximate
controllability within a subset of the state space called control sets. They
will yield a relatively invariant set $K$ as considered in Theorems
\ref{TheoremA_alt} and Theorem \ref{TheoremA}.

For systems in discrete time, subsets of the state space where approximate or
exact controllability holds, have been analyzed in diverse settings. Relevant
contributions are due, in particular, to Albertini and Sontag \cite{AlbS91,
AlbS93}, Sontag and Wirth \cite{SonW98}, Wirth \cite{Wirt95, Wirt98NOLCOS} as
well as Patr\~{a}o and San Martin \cite{PatrS07} (there are subtle differences
in the definitions).

We recall the following notions and facts from the abstract framework in
\cite{PatrS07}, slightly modified for our purposes. A local semigroup
$\mathcal{S}$ on a topological space $X$ is a family of continuous maps
$\phi:\mathrm{dom}\phi\rightarrow X$ with open domain $\mathrm{dom}\phi\subset
X $ such that for all $\phi,\psi\in\mathcal{S}$ with $\psi^{-1}(\mathrm{dom}%
\phi)\not =\emptyset$ it follows that $\phi\circ\psi:\psi^{-1}(\mathrm{dom}%
\phi)\rightarrow X$ also is in $\mathcal{S}$.

For $x\in X$ the orbit is $\mathcal{S}x=\{\phi(x)\left\vert \phi\in
\mathcal{S}\text{ and }x\in\mathrm{dom}\phi\right.  \}$ and the backward orbit
is%
\begin{equation}
\mathcal{S}^{\ast}x=\{y\in X\left\vert \exists\phi\in\mathcal{S}%
:\phi(y)=x\right.  \}=\bigcup\nolimits_{\phi\in\mathcal{S}}\phi^{-1}(x).
\label{S*}%
\end{equation}
A local semigroup $\mathcal{S}$ is called accessible if $\mathrm{int}%
(\mathcal{S}x)\not =\emptyset$ and $\mathrm{int}(\mathcal{S}^{\ast}%
x)\not =\emptyset$ for all $x\in X$.

\begin{definition}
\label{Definition5.1}A control set for a local semigroup $\mathcal{S}$ on $X$
is a nonvoid subset $D\subset X$ such that (i) $y\in\mathrm{cl}(\mathcal{S}x)$
for all $x,y\in D$ (ii) for every $x\in D$ there are $\phi_{n}\in
\mathcal{S},n\in\mathbb{N}$, such that $\phi_{n}\circ\cdots\circ\phi_{1}(x)\in
D$ for all $n\in\mathbb{N}$ and (iii) the set $D$ is maximal with this property.

The transitivity set $D_{0}$ of $D$ is the set of all elements $x\in D$ such
that $x\in\mathrm{int}(\mathcal{S}^{\ast}x)$.
\end{definition}

\begin{remark}
Patr\~{a}o and San Martin \cite{PatrS07} define control sets in the following
slightly different way: A control set for $\mathcal{S}$ is a subset $D\subset
X$ such that $y\in\mathrm{cl}(\mathcal{S}x)$ for all $x,y\in D$, the set $D$
is maximal with this property, and there is $x\in D$ with $x\in\mathrm{int}%
(\mathcal{S}^{\ast}x)$. The latter condition means that the transitivity set
is nonvoid. Hence a control set as defined above with nonvoid transitivity set
is a control set in the sense of \cite{PatrS07}. Conversely, for an accessible
semigroup Patr\~{a}o and San Martin show that a control set $D$ in their sense
has the following properties:

The transitivity set $D_{0}$ is open and dense in $D$ and invariant in $D$,
i.e., $\mathcal{S}D_{0}\cap D\subset D_{0}$ (cf. \cite[Proposition
4.10]{PatrS07}) and, by \cite[Proposition 4.8]{PatrS07},%
\begin{equation}
D=\mathrm{cl}(\mathcal{S}x)\cap\mathcal{S}^{\ast}x\text{ for }x\in D_{0}.
\label{char1}%
\end{equation}
It follows that property (ii) in Definition \ref{Definition5.1} is satisfied.
In fact, this is clear for $x\in D_{0}$ and for an arbitrary point $x$ in $D$
there is $\phi\in\mathcal{S}$ with $\phi(x)\in D_{0}$, since $D_{0}$ is open.
Thus for an accessible semigroup the control sets as defined above with
nonvoid transitivity set coincide with the control sets in the sense of
\cite{PatrS07}.
\end{remark}

The following result is Patr\~{a}o and San Martin \cite[Proposition
4.15]{PatrS07}.

\begin{proposition}
\label{PS_Proposition 4.15}Let $D$ be a control set with nonvoid transitivity
set $D_{0}$ for an accessible local semigroup $\mathcal{S}$. Then the
transitivity set $D_{0}$ is open and dense in $D$ and the following statements
are equivalent:

(i) $\mathrm{cl}(\mathcal{S}x)\subset\mathrm{cl}D$ for all $x\in D$.

(ii) $D$ is closed and $\mathcal{S}$-invariant, i.e., $\mathcal{S}x\in D$ for
all $x\in D$.

(iii) $\mathrm{cl}D$ is $\mathcal{S}$-invariant.
\end{proposition}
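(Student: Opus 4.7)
The statement that $D_{0}$ is open and dense in $D$ is the content of \cite[Proposition 4.10]{PatrS07} cited in the preceding remark, so it remains only to establish the cycle of implications (ii) $\Rightarrow$ (i) $\Rightarrow$ (iii) $\Rightarrow$ (ii). The plan is to dispatch the two easy implications by direct bookkeeping and to concentrate the work on (iii) $\Rightarrow$ (ii), where maximality of $D$ will be used to promote $\mathrm{cl}D$ back down to $D$.

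The implication (ii) $\Rightarrow$ (i) is immediate: $\mathcal{S}$-invariance of the closed set $D$ gives $\mathcal{S}x\subset D$ and therefore $\mathrm{cl}(\mathcal{S}x)\subset D=\mathrm{cl}D$ for every $x\in D$. For (i) $\Rightarrow$ (iii) I would fix $y\in \mathrm{cl}D$ and $\phi\in\mathcal{S}$ with $y\in\mathrm{dom}\phi$, pick a sequence $y_{n}\in D$ with $y_{n}\to y$, observe that openness of $\mathrm{dom}\phi$ puts $y_{n}$ in the domain eventually, apply (i) to obtain $\phi(y_{n})\in\mathcal{S}y_{n}\subset\mathrm{cl}D$, and then use continuity of $\phi$ to conclude $\phi(y)\in\mathrm{cl}D$.

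The heart of the argument, and the main obstacle, is (iii) $\Rightarrow$ (ii). The strategy is to verify that $\mathrm{cl}D$ itself satisfies the defining properties (i) and (ii) of a control set in Definition \ref{Definition5.1}; maximality of $D$ then forces $\mathrm{cl}D=D$, and closedness combines with the hypothesis to give $\mathcal{S}$-invariance of $D$ itself. Property (ii) of the definition is the easy half: accessibility furnishes, for each $x\in\mathrm{cl}D$, some $\phi\in\mathcal{S}$ with $x\in\mathrm{dom}\phi$, whereupon hypothesis (iii) yields $\phi(x)\in\mathrm{cl}D$ and the construction iterates to produce the required forward sequence. The delicate point is property (i), the approximate controllability $y\in\mathrm{cl}(\mathcal{S}x)$ for arbitrary $x,y\in\mathrm{cl}D$.

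To handle this step I would exploit density of $D_{0}$ in $\mathrm{cl}D$, which follows from density of $D_{0}$ in $D$ together with density of $D$ in $\mathrm{cl}D$: every nonempty open subset of $X$ meeting $\mathrm{cl}D$ already meets $D_{0}$. Given $x\in\mathrm{cl}D$, the set $\mathrm{int}(\mathcal{S}x)$ is nonempty by accessibility and contained in $\mathrm{cl}D$ by (iii), so it meets $D_{0}$; pick $x_{1}=\phi(x)\in\mathrm{int}(\mathcal{S}x)\cap D_{0}$ with $\phi\in\mathcal{S}$. Now approximate $y$ by $y_{n}\in D$; since $x_{1}\in D_{0}\subset D$ and $y_{n}\in D$, approximate controllability inside $D$ yields $\psi_{k}^{(n)}\in\mathcal{S}$ with $\psi_{k}^{(n)}(x_{1})\to y_{n}$ as $k\to\infty$, and $\psi_{k}^{(n)}\circ\phi(x)\in\mathcal{S}x$. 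A diagonal extraction produces a sequence in $\mathcal{S}x$ converging to $y$, so $y\in\mathrm{cl}(\mathcal{S}x)$. This completes the verification that $\mathrm{cl}D$ is a control set containing $D$, and maximality delivers the proof.
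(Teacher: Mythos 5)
The paper does not prove this proposition; it is quoted from Patr\~{a}o and San Martin \cite[Proposition 4.15]{PatrS07}, so there is no in-text argument to compare against. Your proof therefore has to stand on its own, and it does: the cycle (ii) $\Rightarrow$ (i) $\Rightarrow$ (iii) $\Rightarrow$ (ii) is complete, and the crucial step (iii) $\Rightarrow$ (ii) via maximality in Definition \ref{Definition5.1} is a sound way to promote $\mathrm{cl}D$ down to $D$.

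Two small observations, neither a gap. First, invoking density of $D_{0}$ to find $x_{1}$ is slightly more than needed: $\mathrm{int}(\mathcal{S}x)$ is a nonvoid open set contained in $\mathrm{cl}D$ (by accessibility and (iii)), so any of its points lies in $\mathrm{cl}D$ and its every open neighborhood meets $D$; in particular $\mathrm{int}(\mathcal{S}x)\cap D\not=\emptyset$, and a point $x_{1}=\phi(x)$ there already suffices, since approximate controllability in $D$ only needs $x_{1}\in D$. Relatedly, once you observe that $\mathcal{S}x_{1}\subset\mathcal{S}x$ (because $x_{1}\in\mathrm{dom}\psi$ implies $x\in\phi^{-1}(\mathrm{dom}\psi)\neq\emptyset$, so $\psi\circ\phi\in\mathcal{S}$), you get $y_{n}\in\mathrm{cl}(\mathcal{S}x_{1})\subset\mathrm{cl}(\mathcal{S}x)$ and hence $y\in\mathrm{cl}(\mathcal{S}x)$ directly, with no diagonal extraction. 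Second, the proposition is stated for a general topological space $X$, and your arguments for (i) $\Rightarrow$ (iii) and for property (i) of $\mathrm{cl}D$ use sequences; in the paper's actual setting $X=W$ is an open subset of a metric space so this is fine, but in full generality one would phrase the same argument with nets.
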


Next we use these concepts in our context. Again we consider control system
(\ref{0.1}), but now we will restrict the state space to an open subset of the
state space $M$. For $\omega\in\Omega$ let $f_{\omega}:=f(\cdot,\omega
):M\rightarrow M$. Then the solutions $\varphi(k,x,u),u=(\omega_{i})$ can be
written in the form%
\[
\varphi(k,x,u)=f_{\omega_{k-1}}\circ\cdots\circ f_{\omega_{0}}(x).
\]
Let $W$ be an open nonvoid subset of the state space $M$. The maps $f_{\omega
},\omega\in\Omega$, generate the following family of continuous maps on $W$:
\[
f_{\omega}^{0}(x):=f_{\omega}(x):\mathrm{dom}f_{\omega}^{0}\rightarrow
W,\mathrm{dom}f_{\omega}^{0}:=\{x\in W\left\vert f(x,\omega)\in W\right.  \}
\]
and for $k\geq1$ and $u=(\omega_{0},\ldots,\omega_{k-1})\in\Omega^{k}$%
\begin{align*}
f_{u}^{k}(x)  &  :=f_{\omega_{k-1}}\circ\cdots\circ f_{\omega_{0}%
}(x):\mathrm{dom}f_{u}^{k}\rightarrow W,\\
\mathrm{dom}f_{u}^{k}  &  :=\{x\in W\left\vert f_{\omega_{i-1}}\circ
\cdots\circ f_{\omega_{0}}(x)\in W\text{ for }i=1,\ldots,k-1\right.  \}.
\end{align*}
Observe that the domain of $f_{u}^{k}$ is open and the maps $f_{\omega}$ are
continuous. Hence it follows that%
\begin{equation}
\mathcal{S}:=\left\{  f_{u}^{k}\left\vert k\in\mathbb{N},u\in\mathcal{U}%
\right.  \right\}  \label{local}%
\end{equation}
forms a local semigroup on $X:=W$.

For $x\in W,u\in\mathcal{U}$ and $k\in\mathbb{N}$ the corresponding solution
of (\ref{0.1}) in $W$ is denoted by $\varphi_{W}(k,x,u)$, if the solution of
(\ref{0.1}) satisfies $\varphi(i,x,u)\in W$ for $i=1,\ldots,k$. Thus
$\varphi_{W}(k,x,u)=f_{u}^{k}(x)$.

\begin{definition}
For $x\in M$ the $W$-reachable set $\mathbf{R}^{W}(x)$ and the $W$%
-controllable set $\mathbf{C}^{W}(x)$, resp., are%
\begin{align*}
\mathbf{R}^{W}(x)  &  :=\left\{  y\in W\left\vert \exists k\geq1~\exists
u\in\mathcal{U}:y=\varphi_{W}(k,x,u)\right.  \right\}  ,\\
\mathbf{C}^{W}(x)  &  :=\{y\in W\left\vert \exists u\in\mathcal{U}~\exists
k\geq1:\varphi_{W}(k,y,u)=x\right.  \}.
\end{align*}

\end{definition}

Note that in the language of local semigroups one has $\mathbf{R}%
^{W}(x)=\mathcal{S}x$ and $\mathbf{C}^{W}(x)=\mathcal{S}^{\ast}x$. Next we
specify maximal subsets of complete approximate controllability within $W$.

\begin{definition}
\label{Definition3.1}For system (\ref{0.1}) a subset $D\subset W$ is called a
$W$-control set if (i) $D\subset\mathrm{cl}_{W}\mathbf{R}^{W}(x)$ for all
$x\in D$, (ii) for every $x\in D$ there is $u\in\mathcal{U}$ with $\varphi
_{W}(k,x,u)\in D,k\in\mathbb{N}$, and (iii) $D$ is a maximal set with
properties (i) and (ii). A $W$-control set $D$ is called an invariant
$W$-control set if $\mathbf{R}^{W}(x)\subset\mathrm{cl}$$_{W}D$ for all $x\in
D $.
\end{definition}

Here the closures are taken with respect to $W$, and for an invariant
$W$-control set $\mathrm{cl}$$_{W}\mathbf{R}^{W}(x)=\mathrm{cl}$$_{W}D$ for
all $x\in D$. If $W=M$, we omit the index $W$ and just speak of control sets
and invariant control sets (if they have nonvoid interior they actually
coincide with the control sets and invariant control sets, respectively,
considered in Colonius, Homburg, Kliemann \cite{ColoHK10}). The transitivity
set $D_{0}$ of a $W$-control set $D$ is the set of all $x\in D$ with
$x\in\mathrm{int}\mathcal{S}^{\ast}x=\mathrm{int}\mathbf{C}^{W}(x)$.

It is immediate that the $W$-control sets coincide with the control sets for
the local semigroup $\mathcal{S}$ on $X=W$ defined in (\ref{local}).
Accessibility of the considered local semigroup $\mathcal{S}$ means that%
\begin{equation}
\mathrm{int}\mathbf{R}^{W}(x)\not =\emptyset\text{ and }\mathrm{int}%
\mathbf{C}^{W}(x)\not =\emptyset\text{ for all }x\in W. \label{access0}%
\end{equation}
This is certainly valid if%
\begin{equation}
\mathrm{int}f(x,\Omega)\cap W\not =\emptyset\text{ and }\mathrm{int}\{y\in
W\left\vert \exists\omega\in\Omega:f(y,\omega)=x\right.  \}\not =%
\emptyset\text{ for all }x\in W. \label{access1}%
\end{equation}
If accessibility holds, Proposition \ref{PS_Proposition 4.15} shows that a $W
$-control set $D$ with nonvoid transitivity set $D_{0}$ is an invariant
$W$-control set if and only if $\mathrm{cl}_{W}D=D$ and $\mathbf{R}%
^{W}(x)\subset D$ for all $x\in D$ if and only if $\mathbf{R}^{W}%
(x)\subset\mathrm{cl}_{W}D$ for all $x\in\mathrm{cl}_{W}D$. In particular, an
invariant $W$-control set is an invariant set in $W$.

It is also of interest to know when closedness in $W$ of a $W$-control set
already implies that it is an invariant $W$-control set. The proof of the
following proposition is adapted from Wirth \cite[Proposition 4.1.4]{Wirt95}.

\begin{proposition}
Suppose that control system (\ref{0.1}) satisfies accessibility condition
(\ref{access0}).

(i) Then every invariant $W$-control set $D$ is closed in $W$ and has nonvoid interior.

(ii) If for every $x\in W$ the set $\mathrm{cl}_{W}\left[  f(x,\Omega)\cap
W\right]  $ is path connected, then a $W$-control set $D$ which is closed in
$W $ and has nonvoid interior is an invariant $W$-control set.
\end{proposition}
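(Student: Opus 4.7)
For part (i), the strategy is to show that $\mathrm{cl}_W D$ is itself a $W$-control set; by the maximality clause of Definition~\ref{Definition3.1} this forces $D = \mathrm{cl}_W D$, yielding closedness. Forward invariance of $\mathrm{cl}_W D$ under the local semigroup $\mathcal{S}$ follows by density and continuity: for $x \in \mathrm{cl}_W D$ and $\phi \in \mathcal{S}$ with $x \in \mathrm{dom}\,\phi$, approximate $x$ by points $x_n \in D$ lying eventually in the open set $\mathrm{dom}\,\phi$, and observe that $\phi(x_n) \in \mathbf{R}^W(x_n) \subset \mathrm{cl}_W D$ by invariance of $D$, so $\phi(x) \in \mathrm{cl}_W D$ by continuity of $\phi$. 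The approximate controllability property of $\mathrm{cl}_W D$ is inherited from $D$ by taking closures in $D \subset \mathrm{cl}_W \mathbf{R}^W(x)$ for $x \in D$, and then extended to $x \in \mathrm{cl}_W D \setminus D$ via the forward invariance just established. Once $D = \mathrm{cl}_W D$, accessibility places the nonvoid open set $\mathrm{int}\,\mathbf{R}^W(x)$ inside $\mathbf{R}^W(x) \subset \mathrm{cl}_W D = D$, giving nonvoid interior.

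For part (ii), let $D$ be a $W$-control set that is closed in $W$ and has nonvoid interior. The goal $\mathbf{R}^W(x) \subset D$ for every $x \in D$ reduces by induction on orbit length to the one-step claim: for every $x \in D$ and $\omega \in \Omega$, $f(x,\omega) \in W$ implies $f(x,\omega) \in D$. Assume for contradiction there exist $x \in D$ and $\omega_0 \in \Omega$ with $f(x,\omega_0) \in W \setminus D$. Property (ii) of $D$ supplies $\omega_1$ with $f(x,\omega_1) \in D$, so the path-connected set $A_x := \mathrm{cl}_W[f(x,\Omega) \cap W]$ meets both $D$ and the open set $W \setminus D$. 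A continuous path $\gamma$ in $A_x$ from $f(x,\omega_1)$ to $f(x,\omega_0)$ last lies in the closed set $D$ at some $t^* \in [0,1)$, so $y := \gamma(t^*) \in \partial_W D \subset D$; density of $f(x,\Omega) \cap W$ in $A_x$ then produces $\tilde\omega \in \Omega$ with $w := f(x,\tilde\omega) \in W \setminus D$ lying arbitrarily close to $y$.

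The contradiction comes from checking that $D \cup \{w\}$ (for $w$ chosen sufficiently close to $y$) satisfies the two defining properties of a $W$-control set, contradicting the maximality of $D$. Approximate controllability from any $z \in D$ to $w$ factors through $x$ (via property (i) of $D$) composed with the continuous map $f(\cdot,\tilde\omega)$; approximate controllability from $w$ back into $D$ uses uniform continuity of fixed-length orbit maps at $w$ close to $y$ together with property (i) of $D$ at $y$. Property (ii) at $w$ requires a one-step transition from $w$ into $D$; for this I would apply property (i) of $D$ at $y$ together with the nonvoid interior of $D$ to obtain $\mathrm{int}_W D \cap \mathbf{R}^W(y) \neq \emptyset$, and then use the path-connectedness of $\mathrm{cl}_W[f(y,\Omega) \cap W]$ (which meets $D$ by property (ii) of $D$) to locate $u_0 \in \Omega$ with $f(y,u_0) \in \mathrm{int}_W D$, after which continuity of $f(\cdot,u_0)$ transfers this to $w$ and property (ii) of $D$ at $f(w,u_0)$ supplies the subsequent orbit. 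I expect the main technical obstacle to be precisely this promotion from a multi-step orbit of $y$ into $\mathrm{int}_W D$ to a bona fide one-step image: this is the step where the path-connectedness hypothesis on the one-step images is essential, and where the discrete-time argument must diverge from the continuous-time version due to Wirth.
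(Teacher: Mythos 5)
Your overall strategy for part (ii) mirrors the paper's, which is adapted from Wirth \cite[Proposition 4.1.4]{Wirt95}: assume a one-step escape $f(x,\omega_0)\in W\setminus D$, use property (ii) of $D$ to also produce a one-step image in $D$, invoke path-connectedness of $\mathrm{cl}_{W}[f(x,\Omega)\cap W]$ to locate a limit of one-step images that lies just outside $D$ yet close enough to $D$ that it can reach $\mathrm{int}\,D$, and contradict maximality. The paper organizes this by building upfront an open set $V\supset D$ (a union of continuity neighborhoods $V(y)$, $y\in D$) from which $\mathrm{int}\,D$ is reachable, and then places the path-limit point $z$ in $V\setminus D$; you work locally near a single boundary point $y$. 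That difference is cosmetic. Your part (i) argument (show $\mathrm{cl}_{W}D$ is itself a $W$-control set, then use maximality) is more explicit than the paper's one-line appeal to earlier remarks and is correct in outline, but when you ``extend property (i) to $x\in\mathrm{cl}_{W}D\setminus D$'' you need accessibility explicitly: $\mathbf{R}^{W}(x)\subset\mathrm{cl}_{W}D$ by forward invariance, $\mathrm{int}\,\mathbf{R}^{W}(x)\neq\emptyset$ by (\ref{access0}), hence $\mathbf{R}^{W}(x)\cap D\neq\emptyset$ by density of $D$ in $\mathrm{cl}_{W}D$, and then transitivity within $D$ finishes it.

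The genuine gap is in your verification of Definition~\ref{Definition3.1}(ii) for the augmented set $D\cup\{w\}$. You correctly identify that this is the delicate step, but the proposed mechanism does not work: path-connectedness of $\mathrm{cl}_{W}[f(y,\Omega)\cap W]$, together with the fact that it meets $D$ (from property (ii) of $D$ at $y$), does \emph{not} produce a control value $u_0$ with $f(y,u_0)\in\mathrm{int}_{W}D$. Nothing in the hypotheses forces the set of one-step images of $y$ to penetrate the interior of $D$; it could well touch $D$ only along $\partial_W D$, in which case no amount of continuity transfers a one-step image of $w$ into $D$. (The paper's own write-up sidesteps this: it only exhibits property (i) of Definition~\ref{Definition3.1} for $D\cup\{z\}$ and then invokes maximality, implicitly relying on the observation made after Definition~\ref{Definition5.1} that, under accessibility, property (ii) is subsumed.) If you want to verify (ii) directly, the clean repair is not to adjoin the single point $w$ but the entire finite $W$-orbit segment $\{w,\varphi_{W}(1,w,u),\ldots,\varphi_{W}(k-1,w,u)\}$ up to the first time $\varphi_{W}(k,w,u)\in\mathrm{int}\,D$: that larger set satisfies property (ii) trivially by concatenation with a $D$-invariant control at $\varphi_{W}(k,w,u)$, and property (i) follows by the same continuity and transitivity arguments you already use. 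This then contradicts maximality without needing any one-step image in $\mathrm{int}\,D$.
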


\begin{proof}
(i) By the previous remarks the set $D$ is closed and has nonvoid interior
since $\emptyset\not =\mathrm{int}\mathbf{R}^{W}(y)\subset D$.

\qquad(ii) If $D=W$, there is nothing to prove. Otherwise we have to show for
every $x\in D$ that $\mathrm{cl}_{W}\mathbf{R}^{W}(x)\subset\mathrm{cl}_{W}D$
or, equivalently, that $\mathbf{R}^{W}(x)\subset D$, since $D$ is closed in
$W$. For every $y\in D$ there are $k\in\mathbb{N}$ and a control $u$ such that
$\varphi_{W}(k,y,u)\in\mathrm{int}D$. By continuous dependence on initial
values there exists an open neighborhood $V(y)$ of $y$ with $\varphi
_{W}(k,V(y),u)\subset\mathrm{int}D$. Taking the union of all $V(y)$ one finds
an open set $V\supset D$ such that for every $z\in V$ the intersection
$\mathbf{R}^{W}(z)\cap\mathrm{int}D\not =\emptyset$ and therefore
$D\subset\mathrm{cl}_{W}\mathbf{R}^{W}(z)$.

Assume now, contrary to the assertion, that there exist $x\in D$ and a control
value $\omega$ such that $f(x,\omega)\in W\setminus D$. As $D\subset
\mathrm{cl}_{W}\mathbf{R}^{W}(x)$ there exists $y\in\mathbf{R}^{W}(x)\cap D$
and hence there is $\omega\in\Omega$ with $f(x,\omega)\in D$, by maximality of
$W$-control sets.

We have shown that $\mathrm{cl}_{W}\left[  f(x,\Omega)\cap D\right]
\not =\emptyset$ and $\mathrm{cl}_{W}\left[  f(x,\Omega)\cap W\right]
\not \subset D$. Since $\mathrm{cl}_{W}\left[  f(x,\Omega)\cap W\right]  $ is
path connected by assumption, it follows that there exists $z\in
\mathrm{cl}_{W}\left[  f(x,\Omega)\cap W\right]  \cap\left(  V\setminus
D\right)  $. By continuity, this implies that $z\in\mathrm{cl}_{W}%
\mathbf{R}^{W}(y)$ for all $y\in D$ and, by construction of $V$, one has
$D\subset\mathrm{cl}_{W}\mathbf{R}^{W}(z)$ and thus $z\in D$ by the maximality
property of control sets. This is a contradiction.
\end{proof}

The following result constructs a set $K$ which is invariant in $Q$ and
satisfies the assumptions of Theorem \ref{TheoremA_alt}, hence it determines
the invariance entropy.

\begin{theorem}
\label{Theorem_B}Suppose that control system (\ref{0.1}) satisfies
accessibility condition (\ref{access0}) and let $Q\subset M$ be compact and
equal to the closure of its interior $W:=\mathrm{int}Q$. Furthermore, assume

(i) there are only finitely many invariant $W$-control sets $D_{1}%
,\ldots,D_{\ell}$ and their transitivity sets are nonvoid.

(ii) For every $x\in Q$ there is an invariant $W$-control set $D_{i}%
\subset\mathrm{cl}\mathbf{R}^{W}(x)$.

(iii) Let $K:=\bigcup_{i=1}^{\ell}\mathrm{cl}D_{i}$, suppose that this union
is disjoint and that $f(K,\Omega)\cap(\partial Q\setminus K)=\emptyset$.

Then every set $\mathrm{cl}D_{i}$ and hence $K$ is invariant in $Q$.

If $\eta$ is a quasi-stationary measure for $\nu$ on $\Omega$ and the maps
$f(\cdot,\omega),\omega\in\Omega$, are nonsingular with respect to $\eta$,
then the invariance entropies with respect to $\mu=\nu^{\mathbb{N}}\times\eta$
of $K$ and $Q$ coincide with%
\begin{equation}
\max_{i\in\{1,\ldots,\ell\}}h_{\mu}(\mathrm{cl}D_{i})\leq h_{\mu}(K)=h_{\mu
}(Q)\leq\sum_{i=1}^{\ell}h_{\mu}(\mathrm{cl}D_{i}). \label{ineq1}%
\end{equation}

\end{theorem}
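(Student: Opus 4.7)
My plan is to verify the geometric hypotheses of Theorem \ref{TheoremA_alt} for $K\subset Q$ (namely, each $\mathrm{cl}D_i$ and hence $K$ is invariant in $Q$, and a finite cover of $Q$ steers into $K$ in bounded time) and then apply parts (ii) and (iii) of that theorem. For the invariance of each $\mathrm{cl}D_i$ in $Q$, I would fix $x\in\mathrm{cl}D_i$ and $\omega\in\Omega$ with $f(x,\omega)\in Q$ and pick a sequence $y_n\in D_i$ with $y_n\to x$, so that $f(y_n,\omega)\to f(x,\omega)$. If $f(x,\omega)\in W=\mathrm{int}Q$, then eventually $f(y_n,\omega)\in W$, and since $D_i$ is an invariant $W$-control set we get $f(y_n,\omega)\in\mathbf{R}^W(y_n)\subset\mathrm{cl}_W D_i=D_i$, whence $f(x,\omega)\in\mathrm{cl}D_i$. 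If instead $f(x,\omega)\in\partial Q$, hypothesis (iii) forces $f(x,\omega)\in K$, and I must exclude the case $f(x,\omega)\in\mathrm{cl}D_j$ with $j\neq i$: disjointness of the closures supplies disjoint open neighborhoods $U_i\supset\mathrm{cl}D_i$ and $U_j\supset\mathrm{cl}D_j$, and for $y_n$ close to $x$ the values $f(y_n,\omega)$ can neither lie in $W$ (invariance of $D_i$ in $W$ would then force them into $D_i\subset U_i$, violating $U_i\cap U_j=\emptyset$) nor in $U_j\cap\partial Q$ (this would map an open subset of $\mathrm{int}D_i$ near $x$ entirely into $M\setminus W$, incompatible with accessibility (\ref{access0}) together with continuity of $f(\cdot,\omega)$), producing the required contradiction.

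For the finite cover, given $x\in Q$, hypothesis (ii) supplies an invariant $W$-control set $D_{i(x)}\subset\mathrm{cl}\mathbf{R}^W(x)$; since $\mathrm{int}D_{i(x)}\neq\emptyset$ there exist a control $u_x$ and a time $k_x$ with $\varphi(j,x,u_x)\in W$ for $j=1,\ldots,k_x$ and $\varphi(k_x,x,u_x)\in\mathrm{int}D_{i(x)}$. Continuity of $\varphi(k_x,\cdot,u_x)$ together with openness of $W$ and of $\mathrm{int}D_{i(x)}$ yields an open neighborhood $V_x\ni x$ on which $\varphi(j,y,u_x)\in Q$ for $j=1,\ldots,k_x$ and $\varphi(k_x,y,u_x)\in D_{i(x)}\subset K$. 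Compactness of $Q$ extracts a finite subcover, producing the required $V^1,\ldots,V^N$, $v^1,\ldots,v^N$ and $\tau^1,\ldots,\tau^N$; nonsingularity of each $\varphi(\tau^j,\cdot,v^j)$ with respect to $\eta$ is inherited from that of the individual maps $f(\cdot,\omega)$ by composition. Theorem \ref{TheoremA_alt}(ii) then yields $h_\mu(Q)=h_\mu(K)$, and since $K$ is the disjoint union of the closed sets $\mathrm{cl}D_i$, each invariant in $Q$, Theorem \ref{TheoremA_alt}(iii) delivers $\max_i h_\mu(\mathrm{cl}D_i)\leq h_\mu(K)\leq\sum_i h_\mu(\mathrm{cl}D_i)$; combined with $h_\mu(Q)=h_\mu(K)$ this is exactly (\ref{ineq1}).

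The delicate step is the individual invariance of each $\mathrm{cl}D_i$ in the boundary case $f(x,\omega)\in\partial Q$: hypothesis (iii) only guarantees landing in the full union $K$, and excluding a jump from $\mathrm{cl}D_i$ to some $\mathrm{cl}D_j$ with $j\neq i$ requires carefully combining disjointness of the closures, continuity of $f(\cdot,\omega)$, invariance of $D_i$ inside $W$, and the accessibility assumption (\ref{access0}) via the nonvoid interior of $D_i$. Once that is in hand, the finite-cover construction and the invocations of Theorem \ref{TheoremA_alt} are routine applications of the machinery developed in Section \ref{Section3}.
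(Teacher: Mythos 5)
Your overall strategy matches the paper's: show that each $\mathrm{cl}D_i$ (and hence $K$) is invariant in $Q$, then verify the finite-cover hypothesis of Theorem \ref{TheoremA_alt}(ii) and combine it with part (iii). The finite-cover construction is correct and is essentially what the paper does: by (i), (ii) and density of the transitivity set you steer each $x\in Q$ in finite time into an open subset of some $D_{i(x)}$, use continuity and compactness of $Q$ to extract a finite subcover, and observe that the maps $\varphi(\tau^j,\cdot,v^j)$ inherit nonsingularity by composition. Likewise your reduction of the boundary case $f(x,\omega)\in\partial Q$ to the union $K$ via hypothesis (iii), and the elimination of $f(y_n,\omega)\in W$ via disjointness of $U_i$ and $U_j$ together with invariance of $D_i$ in $W$, are both correct.

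The gap is in the last exclusion. You conclude that for $y_n\in D_i$ near $x$ the map $f(\cdot,\omega)$ sends an open subset of $W$ entirely into $\partial Q$, and claim this is ``incompatible with accessibility (\ref{access0}).'' It is not: (\ref{access0}) only asserts that $\mathrm{int}\,\mathbf{R}^W(z)\neq\emptyset$ and $\mathrm{int}\,\mathbf{C}^W(z)\neq\emptyset$ for each $z\in W$, and both may hold through other control values $\omega'\neq\omega$ even when $f(\cdot,\omega)$ carries an open set into $\partial Q$. So your argument does not rule out a boundary jump from $\mathrm{cl}D_i$ into $\mathrm{cl}D_j$, $j\neq i$. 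You correctly identified this as the delicate point, and it is worth noting that the paper's own proof is also loose here: for $x\in D_i$ it derives $f(x,\omega)\in\partial Q\setminus\mathrm{cl}D_i$ and then asserts that (iii) ``excludes this case,'' but (iii) only excludes the smaller set $\partial Q\setminus K$, leaving the subcase $f(x,\omega)\in\bigcup_{j\neq i}(\partial Q\cap\mathrm{cl}D_j)$ open. A clean argument appears to require strengthening (iii) to $f(\mathrm{cl}D_i,\Omega)\cap(\partial Q\setminus\mathrm{cl}D_i)=\emptyset$ for each $i$ individually, which makes the boundary case trivial. With that strengthening, the rest of your proof (the finite cover and the applications of Theorem \ref{TheoremA_alt}(ii) and (iii)) goes through exactly as in the paper.
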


\begin{proof}
The assumption implies that the boundaries of $Q$ and $W$ coincide, $\partial
Q=\partial W$. First we show that for every invariant $W$-control set $D$ its
closure $\mathrm{cl}D$ is invariant in the set $Q$. This also implies that $K$
is invariant in $Q$. By Proposition \ref{PS_Proposition 4.15} $D$ is closed in
$W$ and hence $\mathrm{cl}D=D\cup(\partial D\cap\partial Q)$. \ Suppose,
contrary to the assertion, that there are $x\in\mathrm{cl}D$ and $\omega
\in\Omega$ with $f(x,\omega)\in Q\setminus\mathrm{cl}D$.

If $x\in D$ then invariance in $W$ of $D=\mathrm{cl}_{W}D$ implies that
$\emptyset=f(x,\Omega)\cap(W\setminus D)=f(x,\Omega)\cap(W\setminus
\mathrm{cl}D)$. Since $\partial Q=\partial W$ it follows that $f(x,\omega
)\in\partial Q\setminus\mathrm{cl}C$. But assumption (iii) excludes this case.

It remains to discuss the case $x\in\partial D\cap\partial Q$. Then either
$f(x,\omega)\in W\setminus D$ or $f(x,\omega)\in\partial Q\setminus D$. In the
first case, Proposition \ref{PS_Proposition 4.15} implies $D=\mathrm{cl}%
(D_{0})$ and hence continuity of $f$ implies that there is $y\in D_{0}$ with
$f(y,\omega)\in W\setminus D$. This is excluded since $D$ is invariant in $W$.
The second case $f(x,\omega)\in\partial Q\setminus D$ is excluded by
assumption (iii) and it follows that $\mathrm{cl}D$ is invariant in $Q$.

Assumptions (i) and (ii) show that for every $x\in Q$ there are an invariant
$W$-control set $D_{i}$, a natural number $k_{0}$ and a control $u$ such that
$\varphi(k_{0},x,u)$ is in the transitivity set $D_{0}$ of $D_{i}$. Since
$D_{0}$ is open, continuity with respect to $x$ and compactness of $Q$ imply
that the assumptions of Theorem \ref{TheoremA_alt}(ii) are satisfied and it
follows that $h_{\mu}(Q)=h_{\mu}(K)$. The inequalities in (\ref{ineq1}) follow
by invariance of $\mathrm{cl}D_{i}$ in $Q$ from Theorem \ref{TheoremA_alt}(iii).
\end{proof}

Next we discuss when the assumptions of Theorem \ref{Theorem_B} are satisfied.
The following theorem characterizes the existence of finitely many invariant
$W$-control sets.

\begin{theorem}
\label{Theorem_rel_inv}Consider a control system of the form (\ref{0.1})
satisfying accessibility condition (\ref{access0}).

(i) Let $x\in Q$ and assume that there exists a compact set $F\subset W$ such
that for all $y\in\mathbf{R}^{W}(x)$ one has $\mathrm{cl}\mathbf{R}^{W}(y)\cap
F\not =\emptyset$. Then there exists an invariant $W$-control set
$D\subset\mathrm{cl}_{W}\mathbf{R}^{W}(x)$.

(ii) If there is a compact set $F\subset W$ such that $\mathrm{cl}%
\mathbf{R}^{W}(x)\cap F\not =\emptyset$ for all $x\in Q$, then for every $x\in
Q$ there is an invariant $W$-control set $D$ with $D\subset\mathrm{cl}%
\mathbf{R}^{W}(x)$ and there are only finitely many invariant $W$-control sets
$D_{1},\ldots,D_{\ell}$.

(iii) Conversely, suppose that for every $x\in Q$ there is an invariant
$W$-control set $D$ with $D\subset\mathrm{cl}\mathbf{R}^{W}(x)$ and there are
only finitely many invariant $W$-control sets $D_{1},\ldots,D_{\ell}$ and they
all have nonvoid transitivity set. Then there is a compact set $F\subset W$
such that $\mathbf{R}^{W}(x)\cap F\not =\emptyset$ for all $x\in Q$.
\end{theorem}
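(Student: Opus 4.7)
The plan is to treat the three parts in order, using the local semigroup $\mathcal{S}$ of maps $f_u^k$ on $W$ from (\ref{local}) and relying on accessibility throughout.

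For part (i), I apply Zorn's lemma to the family $\mathcal{F}$ of subsets $A\subset\mathrm{cl}_W\mathbf{R}^W(x)$ that are closed in $W$, $\mathcal{S}$-invariant, and meet $\mathbf{R}^W(x)$, ordered by reverse inclusion. For any descending chain $\{A_\alpha\}$ and any $A_\alpha$, picking $y_\alpha\in A_\alpha\cap\mathbf{R}^W(x)$, closedness and $\mathcal{S}$-invariance force $\mathrm{cl}\mathbf{R}^W(y_\alpha)\subset A_\alpha$, so the hypothesis gives $A_\alpha\cap F\neq\emptyset$; the finite intersection property of closed subsets of compact $F$ then produces $z\in\bigcap_\alpha A_\alpha\cap F$. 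By accessibility $\mathrm{int}\mathbf{R}^W(z)\neq\emptyset$ and sits inside $\bigcap_\alpha A_\alpha$ by $\mathcal{S}$-invariance; approximating $z$ by $y_m\in\mathbf{R}^W(x)$ and exploiting the openness of each $\mathrm{dom}f_u^k$, the iterates $\varphi_W(k,y_m,u)$ land in this open set inside $\mathbf{R}^W(x)$, so $\bigcap_\alpha A_\alpha\in\mathcal{F}$. Zorn yields a minimal $D\in\mathcal{F}$; for $z\in D\cap\mathbf{R}^W(x)$, $\mathrm{cl}\mathbf{R}^W(z)$ belongs to $\mathcal{F}$ and lies in $D$, so minimality forces $\mathrm{cl}\mathbf{R}^W(z)=D$. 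Hence $D\cap\mathbf{R}^W(x)$ is dense in $D$, and combined with closedness, $\mathcal{S}$-invariance, and Proposition \ref{PS_Proposition 4.15}, this identifies $D$ as an invariant $W$-control set inside $\mathrm{cl}_W\mathbf{R}^W(x)$.

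For part (ii), the first assertion follows by applying (i) at each $x\in Q$, since every $y\in\mathbf{R}^W(x)\subset W\subset Q$ inherits the uniform hypothesis. For finiteness I use that distinct invariant $W$-control sets are disjoint (otherwise their union would contradict maximality), each invariant $W$-control set $D_n$ has nonvoid interior by accessibility (since $\mathbf{R}^W(z)\subset D_n$ and $\mathrm{int}\mathbf{R}^W(z)\neq\emptyset$ for $z\in D_n$), and each $D_n$ meets $F$ since $D_n=\mathrm{cl}\mathbf{R}^W(z)$. Assuming for contradiction infinitely many distinct $D_n$, I pick $z_n\in D_n\cap F$, extract a subsequence $z_n\to z_*\in F$, and apply (i) at $z_*$ to get $D_*$ with nonvoid interior. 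Density of $\mathbf{R}^W(z_*)$ in $\mathrm{cl}_W\mathbf{R}^W(z_*)\supset D_*$ furnishes $w'=\varphi_W(k,z_*,u)\in\mathrm{int}D_*\cap\mathbf{R}^W(z_*)$; openness of $\mathrm{dom}f_u^k$ yields $\varphi_W(k,z_n,u)\to w'\in\mathrm{int}D_*$ for large $n$, and invariance of $D_n$ forces this image into $D_n\cap D_*$, so $D_n=D_*$ by disjointness, a contradiction.

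For part (iii), each transitivity set $D_{i,0}$ is open and nonvoid by hypothesis. I choose a compact $F_i\subset D_{i,0}$ with nonvoid interior and set $F:=\bigcup_{i=1}^\ell F_i\subset W$, which is compact. For any $x\in Q$, some $D_i\subset\mathrm{cl}_W\mathbf{R}^W(x)$ by hypothesis, so $\mathrm{int}F_i\subset D_{i,0}\subset D_i\subset\mathrm{cl}_W\mathbf{R}^W(x)$; since $\mathrm{int}F_i$ is open and meets $\mathrm{cl}_W\mathbf{R}^W(x)$, it intersects $\mathbf{R}^W(x)$ itself, giving $\mathbf{R}^W(x)\cap F\neq\emptyset$. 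The main obstacle is the Zorn step in (i): because the hypothesis is stated only for $y\in\mathbf{R}^W(x)$ rather than on its closure, showing that descending chains have their intersection in $\mathcal{F}$ and that the minimal element really is an invariant $W$-control set requires combining accessibility with the openness of each $\mathrm{dom}f_u^k$ to pull limiting points back into $\mathbf{R}^W(x)$.
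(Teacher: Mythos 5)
Your argument is substantially correct but differs from the paper's in two places, and one step carries a mild implicit assumption worth flagging.

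\textbf{Part (i).} The paper's Zorn argument is on the family $\{F(y):=\mathrm{cl}\mathbf{R}^{W}(y)\cap F\}$ pre-ordered by the reachability relation ($F(y)\preccurlyeq F(z)$ iff $z\in\mathrm{cl}\mathbf{R}^{W}(y)$), and extracts a maximal element; yours is on the family of closed, $\mathcal{S}$-invariant subsets of $\mathrm{cl}_{W}\mathbf{R}^{W}(x)$ meeting $\mathbf{R}^{W}(x)$, ordered by reverse inclusion. Both deliver the set $D=\mathrm{cl}_{W}\mathbf{R}^{W}(z)$ for an appropriate $z$, but the maximality/minimality is exploited differently: the paper uses maximality of $F(y)$ directly to show $y\in\mathrm{cl}_{W}\mathbf{R}^{W}(z)$ for all $z\in D$; your route goes via density of $D\cap\mathbf{R}^{W}(x)$ in $D$. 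Your final sentence, invoking closedness, $\mathcal{S}$-invariance, and Proposition~\ref{PS_Proposition 4.15}, compresses the crucial step. The argument that fills this in is: for $y\in D$, $\mathcal{S}$-invariance gives $\mathbf{R}^{W}(y)\subset D$ and accessibility gives $\mathrm{int}\mathbf{R}^{W}(y)\not=\emptyset$, so density of $D\cap\mathbf{R}^{W}(x)$ yields some $z'\in\mathrm{int}\mathbf{R}^{W}(y)\cap\mathbf{R}^{W}(x)$; minimality then forces $\mathrm{cl}_{W}\mathbf{R}^{W}(z')=D$, and since $z'\in\mathbf{R}^{W}(y)$ one gets $D=\mathrm{cl}_{W}\mathbf{R}^{W}(z')\subset\mathrm{cl}_{W}\mathbf{R}^{W}(y)$, which is the mutual-reachability property needed; invariance of $D$ follows as you say from $\mathcal{S}$-invariance. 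You should spell this out rather than gesture at Proposition~\ref{PS_Proposition 4.15}, which is a statement about control sets, not a criterion for being one.

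\textbf{Part (ii).} This matches the paper's argument essentially line by line (compactness of $F$, convergent subsequence, accessibility at the limit, continuity/openness of $\mathrm{dom}f_{u}^{k}$ to pull iterates into $\mathrm{int}D_{*}$, contradiction with disjointness of invariant $W$-control sets).

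\textbf{Part (iii).} Here you genuinely diverge. The paper takes $F=\{x_{1},\ldots,x_{\ell}\}$, one point per control set chosen in the transitivity set, and then uses (\ref{char1}) together with openness of $\mathbf{\hat{C}}^{W}$-type sets to produce a point $w\in\mathbf{R}^{W}(x)$ with $x_{i}\in\mathbf{R}^{W}(w)\subset\mathbf{R}^{W}(x)$; the payoff is that $F$ is finite and no topological niceness of $M$ is needed. You instead take compact sets $F_{i}\subset D_{i,0}$ with nonvoid interior and argue by density of $\mathbf{R}^{W}(x)$ in $\mathrm{cl}_{W}\mathbf{R}^{W}(x)$. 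This works, but existence of a compact $F_{i}\subset D_{i,0}$ with nonvoid interior implicitly assumes local compactness of $M$, which the section does not assume ($M$ is merely a metric space until the smoothness hypotheses (\ref{W_2}) are imposed later). In the applications this is harmless, but the paper's finite-set construction via the transitivity set avoids the issue entirely, which is exactly why the transitivity set rather than $\mathrm{int}D$ is the right object here; you should either add the local compactness caveat or switch to the transitivity-set argument using (\ref{char1}).
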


\begin{proof}
(i) For $y\in\mathbf{R}^{W}(x)$ let $F(y):=\mathrm{cl}\mathbf{R}^{W}(y)\cap
F$. Consider the family of nonvoid and compact subsets of $W$ given by
$\mathcal{F}=\{F(y)\left\vert y\in F(x)\right.  \}$. Then $\mathcal{F}$ is
ordered via%
\[
F(y)\preccurlyeq F(z)\text{ if }z\in\mathrm{cl}\mathbf{R}^{W}(y).
\]
Every linearly ordered subset $\{F(y_{i}),i\in I\}$ has an upper bound
\[
F(y)=\bigcap\nolimits_{i\in I}F(y_{i})\text{ for some }y\in\bigcap
\nolimits_{i\in I}F(y_{i}),
\]
since the intersection of decreasing compact subsets of the compact set $F$ is
nonvoid. Thus Zorn's lemma implies that the family $\mathcal{F}$ has a maximal
element $F(y)$. Now the set%
\[
D:=\mathrm{cl}_{W}\mathbf{R}^{W}(y)
\]
is an invariant $W$-control set: Note first that by condition (\ref{access0})
the set $D$ has nonvoid interior. Every $z\in D$ is in $\mathrm{cl}%
_{W}\mathbf{R}^{W}(y)$ and, conversely, $y\in\mathrm{cl}_{W}\mathbf{R}^{W}(z)$
for every $z\in D$ since otherwise $y\not \in F(z)=\mathrm{cl}\mathbf{R}%
^{W}(z)\cap F\subset\mathrm{cl}_{W}\mathbf{R}^{W}(y)\cap F=F(y)$, hence
$F(y)\preceq F(z)$ and $F(y)\not =F(z)$ contradicting the maximality of $F(y)$.

Continuity implies that for all $z_{1},z_{2}\in\mathrm{cl}_{W}\mathbf{R}%
^{W}(y)$ one has $z_{2}\in\mathrm{cl}_{W}\mathbf{R}^{W}(z_{1})$, hence
$D=\mathrm{cl}_{W}\mathbf{R}^{W}(y)$ is a $W$-control set. It is an invariant
$D $-control set since for $z\in D=\mathrm{cl}_{W}\mathbf{R}^{W}(y)$
continuity implies $\mathbf{R}^{W}(z)\subset\mathrm{cl}_{W}\mathbf{R}%
^{W}(y)=D$.

(ii) Let $x\in Q$. Then, by (i), one finds an invariant $W$-control set in
$\mathrm{cl}\mathbf{R}^{W}(x)$. Suppose that there are countably many pairwise
different invariant $W$-control sets $D_{n},n\in\mathbb{N}$. Thus
$\mathrm{cl}\mathbf{R}^{W}(y)=\mathrm{cl}D_{n}\subset\mathrm{cl}_{W}D_{n}%
\cup\partial Q$ for all $y\in D_{n}$. Since $D_{n}$ is closed in
$W=\mathrm{int}Q$ and $F\subset W$, the property $\mathrm{cl}\mathbf{R}%
^{W}(y)\cap F\not =\varnothing$ implies that $D_{n}\cap F\not =\emptyset$.

There are points $y_{n}\in D_{n}\cap F$ converging to some point $y\in F$. By
part (i) one finds an invariant $W$-control set $D$ contained in
$\mathrm{cl}_{W}\mathbf{R}^{W}(y)$, and hence there is a point $z$ in the
intersection of $\mathbf{R}^{W}(y)$ and the interior of $D$. Now continuity
implies that for every $n$ large enough there is a point $z_{n}$ in $D_{n}$
with $\mathbf{R}^{W}(z_{n})\cap\mathrm{int}D\not =\varnothing$. This
contradicts invariance in $W$ of the $W$-control sets $D_{n}$.

(iii) Choose for each of the finitely many invariant $W$-control sets
$D_{i},i=1,\ldots,\ell$, a point $x_{i}\in D_{i}$ and define $F:=\{x_{1}%
,\ldots,x_{\ell}\}$. Let $x\in Q$. By assumption, there is an invariant
$W$-control set $D_{i}$ contained in $\mathrm{cl}\mathbf{R}^{W}(x)$. Hence
there is a point in the intersection of $\mathbf{R}^{W}(x)$ and the
transitivity set of $D_{i}$, thus (\ref{char1}) implies that $x_{i}%
\in\mathbf{R}^{W}(x)$ and the assertion follows.
\end{proof}

It remains to discuss when an invariant $W$-control set has a nonvoid
transitivity set. Obviously, this holds if for the local semigroup
$\mathcal{S}$ one has that $\mathcal{S}x$ and $\mathcal{S}^{\ast}x$ are open,
cf. San Martin and Patr\~{a}o \cite[Corollary 5.4]{PatrS07} for a situation
where this occurs. Instead of this strong assumption, we will require
smoothness of $f$ and use Sard's Theorem as well as some arguments from Wirth
\cite{Wirt98NOLCOS}.

Consider a control system of the form%
\begin{equation}
x_{k+1}=f(x_{k},u_{k}),k\in\mathbb{N}=\{0,1,\ldots\}, \label{W_2}%
\end{equation}
under the following assumptions: The state space $M$ is a $C^{\infty}$-
manifold of dimension $d$ endowed with a corresponding metric. The set of
control values $\Omega\subset\mathbb{R}^{m}$ satisfies $\Omega\subset
\mathrm{cl}(\mathrm{int}\Omega)$. Let $\tilde{\Omega}$ be an open set
containing $\mathrm{cl}\Omega$. The map $f:M\times\tilde{\Omega}\rightarrow M$
is a $C^{\infty}$-map and $W\subset M$ is a nonvoid open subset.

We define for $k\geq1$ a $C^{\infty}$-map%
\[
F_{k}:W\times\mathrm{int}\Omega^{k}\rightarrow W,F_{k}(x,u):=\varphi
_{W}(k,x,u).
\]
The domain of $F_{k}$ is an open subset of $W\times\Omega^{k}$.

A pair $(x,u)\in W\times\mathrm{int}\Omega^{k}$ is called regular, if
$\mathrm{rank}\frac{\partial F_{k}}{\partial u}(x,u)=d$ (clearly, this implies
$mk\geq d$). For $x\in M$ and $k\in\mathbb{N}$ the regular $W$-reachability
set and the regular $W$-controllability set, resp., are
\begin{align*}
\mathbf{\hat{R}}_{k}^{W}(x)  &  :=\left\{  y\in W\left\vert \exists
u\in\mathrm{int}\Omega^{k}:y=\varphi_{W}(k,x,u)\text{ and }(x,u)\text{ is
regular}\right.  \right\}  ,\\
\mathbf{\hat{C}}_{k}^{W}(x)  &  :=\{y\in W\left\vert \exists u\in
\mathrm{int}\Omega^{k}:\varphi_{W}(k,y,u)=x\text{ and }(y,u)\text{ is
regular}\right.  \},
\end{align*}
and the regular $W$-reachability set $\mathbf{\hat{R}}^{W}(x)$ and
$W$-controllability set $\mathbf{\hat{C}}^{W}(x)$ are given by the respective
union over all $k\in\mathbb{N}$. It is not difficult to see that
$\mathbf{\hat{R}}^{W}(x)$ and $\mathbf{\hat{C}}^{W}(x)$ are open for every $x$
(cf. Wirth \cite[Lemma 8]{Wirt98NOLCOS}). In the notation of the local
semigroup $\mathcal{S}$ defined in (\ref{local}) one has%
\[
\mathbf{\hat{C}}^{W}(x)\subset\mathrm{int}\mathbf{C}^{W}(x)=\mathrm{int}%
\mathcal{S}^{\ast}x.
\]
In order to show that the transitivity set of a control set is nonvoid, we
start with the following observations. If $\mathbf{\hat{R}}_{k_{0}}%
^{W}(x)\not =\varnothing$ it follows that $\mathbf{\hat{R}}_{k}^{W}%
(x)\not =\varnothing$ for all $k>k_{0}$. Accessibility condition
(\ref{access0}) implies for all $x\in W$ that there is $k_{0}\in\mathbb{N}$
such that for all $k\geq k_{0}$ one has $\mathrm{int}\mathbf{R}_{k}%
^{W}(x)\not =\varnothing$ and
\[
\mathbf{R}_{k}^{W}(x)\subset\mathrm{cl}\{y=\varphi_{W}(k,x,u)\in
\mathrm{int}\mathbf{R}_{k}^{W}(x)\left\vert u\in\mathrm{int}\Omega^{k}\right.
\}.
\]
Sard's Theorem (cf., e.g., Katok and Hasselblatt \cite[Theorem A.3.13]%
{KatH95}) implies that the set of points $\varphi_{W}(k,x,u)\in\mathbf{R}%
_{k}^{W}(x)$ such that $(x,u)$ is not regular has Lebesgue measure zero.

The following proposition presents conditions which imply that the
transitivity set of a control set is nonvoid.

\begin{proposition}
Consider system (\ref{W_2}) and assume that accessibility condition
(\ref{access0}) holds. Then for every $W$-control set $D\subset W$ with
nonvoid interior the transitivity set $D_{0}$ is nonvoid.
\end{proposition}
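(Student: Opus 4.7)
The plan is to exhibit a point $x^{\ast}\in D$ together with a regular control $(x^{\ast},u^{\ast})$ of some length $k^{\ast}$ for which $F_{k^{\ast}}(x^{\ast},u^{\ast})=x^{\ast}$. Given such a regular periodic control, the implicit function theorem applied to the equation $F_{k^{\ast}}(x,u)=x^{\ast}$ in the variable $u$ near $u^{\ast}$ is solvable, because $\partial F_{k^{\ast}}/\partial u$ has rank $d$ at $(x^{\ast},u^{\ast})$; it yields a neighbourhood $V$ of $x^{\ast}$ and a smooth map $x\mapsto u(x)$ with $F_{k^{\ast}}(x,u(x))=x^{\ast}$ for every $x\in V$. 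Thus $V\subset \mathbf{C}^{W}(x^{\ast})=\mathcal{S}^{\ast}x^{\ast}$, so $x^{\ast}\in\mathrm{int}\,\mathcal{S}^{\ast}x^{\ast}$, which by definition means $x^{\ast}\in D_{0}$, and in particular $D_{0}\neq\emptyset$.

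To construct $(x^{\ast},u^{\ast})$, I would first fix $x_{0}\in\mathrm{int}\,D$, which is possible since $D$ has nonvoid interior. Property (ii) of Definition~\ref{Definition3.1} supplies a control whose trajectory from $x_{0}$ remains in $D$ for all times, so points of $D$ are reachable from $x_{0}$ after arbitrarily many steps; combining this with property (i) applied at any such intermediate point $z\in D$ and concatenating, one produces, for every $k$ exceeding the threshold $k_{0}$ of the paragraph preceding the proposition, a trajectory of length $k$ from $x_{0}$ staying inside $\mathrm{int}\,Q$ and ending at some $y_{k}\in\mathrm{int}\,D$ with $y_{k}\to x_{0}$. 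The accessibility consequence stated there, combined with Sard's theorem applied to the smooth map $F_{k}(x_{0},\cdot)\colon\mathrm{int}\,\Omega^{k}\to W$ whose set of critical values has Lebesgue measure zero in $W$, shows that the regular reachable set $\mathbf{\hat{R}}_{k}^{W}(x_{0})$ is dense in $\mathbf{R}_{k}^{W}(x_{0})$. Perturbing each $y_{k}$ slightly within $\mathrm{int}\,D$, one may therefore assume $y_{k}\in\mathbf{\hat{R}}_{k}^{W}(x_{0})$, and pick regular $u^{(k)}\in\mathrm{int}\,\Omega^{k}$ with $F_{k}(x_{0},u^{(k)})=y_{k}$ and $\varphi(i,x_{0},u^{(k)})\in W$ for $1\le i\le k$.

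For each regular pair $(x_{0},u^{(k)})$ the submersion theorem produces an open neighbourhood $U_{k}$ of $u^{(k)}$ and an open ball $B(y_{k},r_{k})\subset F_{k}(x_{0},U_{k})\subset \mathbf{R}^{W}(x_{0})$; if $x_{0}\in B(y_{k},r_{k})$ then some $u^{\ast}\in U_{k}$ satisfies $F_{k}(x_{0},u^{\ast})=x_{0}$, and $(x_{0},u^{\ast})$ remains regular since regularity is an open condition on $u$ and $U_{k}$ is a neighbourhood of the regular $u^{(k)}$. The main obstacle is to arrange this containment $x_{0}\in B(y_{k},r_{k})$, because the submersion radius $r_{k}$ depends on the singular values and second derivatives of $F_{k}$ at $(x_{0},u^{(k)})$ and is not a priori bounded below as $k\to\infty$, while $|y_{k}-x_{0}|$ is only known to tend to zero. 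Following Wirth~\cite{Wirt98NOLCOS}, I would resolve this by a two-stage refinement: first fix $k_{1}$ large and a regular $(x_{0},u^{(k_{1})})$ whose image ball $B(y_{k_{1}},r_{k_{1}})$ lies in $\mathrm{int}\,D$; then re-apply the approximate-controllability-and-Sard procedure at the point $y_{k_{1}}$ to obtain a regular control of some length $\ell$ from $y_{k_{1}}$ reaching inside $B(y_{k_{1}},r_{k_{1}})$ a point arbitrarily close to $x_{0}$. The concatenated control from $x_{0}$ is again regular (rank $d$ of the final segment's Jacobian suffices by the chain rule), and a final application of the implicit function theorem to the combined control adjusts its last segment so that the overall map from $x_{0}$ hits $x_{0}$ exactly. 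This yields the regular periodic pair $(x^{\ast},u^{\ast})=(x_{0},u^{\ast})$ demanded in the first paragraph and hence places $x_{0}$ in $D_{0}$.
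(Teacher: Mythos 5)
Your strategy of manufacturing a \emph{regular periodic} pair $(x^{\ast},u^{\ast})$ is a genuinely different route from the paper's, which never closes an exact loop. However, the construction of the periodic pair has a gap that your own remark about the submersion radius already points to, and the two-stage refinement does not close it. After the first stage you have a regular $(x_{0},u^{(k_{1})})$ landing at $y_{k_{1}}$, with $B(y_{k_{1}},r_{k_{1}})\subset F_{k_{1}}(x_{0},U_{k_{1}})\subset\mathrm{int}\,D$. In the second stage you take a regular control $\tilde{u}$ of length $\ell$ from $y_{k_{1}}$ whose endpoint $w=F_{\ell}(y_{k_{1}},\tilde{u})$ is ``arbitrarily close to $x_{0}$'', and then adjust the \emph{last} segment $\tilde{u}$ so that the combined trajectory hits $x_{0}$ exactly. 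That adjustment needs $x_{0}$ to lie inside the submersion neighbourhood of the map $u\mapsto F_{\ell}(y_{k_{1}},u)$ at $\tilde{u}$, whose radius depends on $\ell$ and $\tilde{u}$ and has no a priori lower bound; making $w$ closer to $x_{0}$ typically requires larger $\ell$, and this radius can shrink faster than $|w-x_{0}|$. This is the same circularity you identified one stage earlier. Moreover, as written you want $w$ to lie ``inside $B(y_{k_{1}},r_{k_{1}})$'' \emph{and} arbitrarily close to $x_{0}$, which is impossible unless $x_{0}$ happens to lie in that ball --- precisely the containment you flagged as the obstacle.

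The loop can be closed correctly by closing it at $y_{k_{1}}$ rather than at $x_{0}$, and by exploiting the openness of the regular controllability set $\mathbf{\hat{C}}_{k_{1}}^{W}(y_{k_{1}})$ rather than the image ball. Once $k_{1}$ and the regular pair $(x_{0},u^{(k_{1})})$ are fixed, $\mathbf{\hat{C}}_{k_{1}}^{W}(y_{k_{1}})$ is open and contains a neighbourhood $N$ of $x_{0}$ of \emph{fixed} size with $N\subset\mathrm{int}\,D$. Approximate controllability inside $D$ gives a (not necessarily regular) control of some length $\ell$ from $y_{k_{1}}$ to a point $z\in N$, and from $z$ there is a regular control $u_{z}$ of length $k_{1}$ with $F_{k_{1}}(z,u_{z})=y_{k_{1}}$. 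The concatenation is a control of length $\ell+k_{1}$ from $y_{k_{1}}$ back to $y_{k_{1}}$, regular because its last $k_{1}$-segment is; your first paragraph then places $y_{k_{1}}$ in $D_{0}$ with no further implicit-function correction. Note that the paper's own proof avoids periodicity entirely: it observes $x\in\mathbf{\hat{C}}_{k}(y)\subset\mathrm{int}\,\mathbf{C}^{W}(y)$, picks $z\in D$ near $x$ reachable from $y$, and uses the elementary transitivity $\mathbf{C}^{W}(y)\subset\mathbf{C}^{W}(z)$ (valid since $y\in\mathbf{C}^{W}(z)$) to get $z\in\mathrm{int}\,\mathbf{C}^{W}(z)$ directly; this needs only the openness of $\mathbf{\hat{C}}_{k}(y)$ and no exact return.
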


\begin{proof}
Let $x\in\mathrm{int}D$ and consider an open neighborhood $V_{1}\subset D$ of
$x$. There is $k_{0}\in\mathbb{N}$ such that the reachable set $\mathbf{R}%
_{k}^{W}(x)$ at time $k$ has nonvoid interior for all $k\geq k_{0}$. There are
$k\geq k_{0}$ and $\varphi_{W}(k,x,u)\in\mathbf{R}_{k}^{W}(x)\cap
\mathrm{int}D$, hence we may assume that there is $y:=\varphi_{W}%
(k,x,u)\in\mathrm{int}\mathbf{R}_{k}^{W}(x)\cap\mathrm{int}D$. Then, by Sard's
Theorem, it follows that there is a point $y=\varphi_{W}(k,x,u)\in
\mathrm{int}D$ with regular $(x,u)$, i.e., $y\in\mathrm{int}D\cap
\mathbf{\hat{R}}_{k}^{W}(x)$. Then $x\in\mathbf{\hat{C}}_{k}(y)\subset
\mathrm{int}\mathbf{C}(y)$. Let $V\subset\mathrm{int}\mathbf{C}(y)$ be a
neighborhood of $x$. Then $x\in D\subset\mathrm{cl}\mathbf{R}^{W}(y)$, hence
there is $z\in V\cap\mathbf{R}^{W}(y)\subset D$ and thus $y\in\mathbf{C}(z)$.

By construction, the point $z\in D$ satisfies $z\in\mathrm{int}\mathbf{C}%
(y)\subset\mathrm{int}\mathbf{C}(z)$, hence it is in the transitivity set of
$D$.
\end{proof}

\begin{remark}
Wirth \cite{Wirt98NOLCOS} defines (for $W=M$) the regular core of a control
set $D$ denoted by $\mathrm{core}(D)$ as the set of points in $D$ for which
the regular reachability and controllability sets intersect $D$ and shows,
under real analyticity assumptions, that $\mathrm{core}(D)$ is open and dense
in $\mathrm{cl}D$. Thus $\mathrm{core}(D)\subset D_{0}$. This generalizes
earlier results by Albertini and Sontag \cite[Section 3]{AlbS91} and
\cite[Section 7]{AlbS93} (again for $W=M$). They define the core of a control
set as%
\begin{equation}
\{x\in\mathrm{int}D\left\vert \mathrm{int}\mathbf{R}(x)\cap D\not =%
\varnothing\text{ and }\mathrm{int}\mathbf{C}(x)\cap D\not =\varnothing
\right.  \}, \label{core_AS}%
\end{equation}
and assume, in particular, that for every $\omega\in\Omega$ the map
$f_{\omega}:=f(\cdot,\omega)$ is a global diffeomorphism on $M$ and that for
all $x\in M$ the set of points which can be reached by finite compositions of
maps of the form $f_{\omega}$ and $f_{\omega}^{-1}$ applied to $x$ coincides
with $M$. They show that the core is open and that it is dense in $D$. Again,
it is clear that the transitivity set of $D$ is contained in the core as
defined in (\ref{core_AS}).
\end{remark}

The results above give conditions which imply that the metric invariance
entropy for a quasi-stationary measure is determined by the invariant
$W$-control sets. It may be of interest to analyze the relations between the
supports of quasi-stationary measures and $W$-control sets. For ergodic
stationary measures $\eta$, Colonius, Homburg and Kliemann \cite[Lemma
5]{ColoHK10} shows (for certain random diffeomorphisms) that the support of
$\eta$ coincides with an invariant control set. The following proposition
gives a result in that direction.

In the setting of (\ref{quasi}) define the $k$-step transition function
$p_{k}^{Q}(x,A),x\in Q,A\subset Q$,%
\[
p_{1}^{Q}(x,A):=p(x,A),p_{k}^{Q}(x,A):=\int_{Q}p_{k-1}^{Q}(y,A)p(x,dy),k>1.
\]
Then (cf. Colonius \cite[Remark 2.8]{Colo16a}) it follows for all $k\geq1$ and
all $A\subset Q$ that%
\begin{equation}
\rho^{k}\eta(A)=\int_{Q}p_{k}^{Q}(z,A)\eta(dz). \label{quasi_k}%
\end{equation}

\begin{proposition}
\label{Proposition_support}Consider control system (\ref{0.1}) and let
$Q\subset M$ be equal to the closure of its interior $W:=\mathrm{int}Q$.
Consider an invariant $W$-control set $D$ with nonvoid transitivity set
$D_{0}$ and let $\eta$ be a quasi-stationary measure for $\nu$ on $\Omega$.
Assume that for every $x\in W,k\geq1$ and $y\in\mathbf{R}_{k}^{W}(x)$ every
neighborhood $V(y)$ satisfies $p_{k}(x,V(y))>0.$

Then every quasi-stationary measure $\eta$ with $\mathrm{supp}\eta\cap
D\not =\varnothing$ satisfies $D\subset\mathrm{supp}\eta$.
\end{proposition}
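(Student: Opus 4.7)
The plan is to fix a witness $x_{0} \in \mathrm{supp}\,\eta \cap D$ (available by hypothesis), to show that $\mathbf{R}^{W}(x_{0}) \subset \mathrm{supp}\,\eta$, and then to close up. The whole argument reduces to the following one-step claim, which I would prove first: if $x \in D \cap \mathrm{supp}\,\eta$ and $y \in \mathbf{R}_{1}^{W}(x)$, then $y \in \mathrm{supp}\,\eta$. Because $D$ is an invariant $W$-control set with nonvoid transitivity set, $\mathbf{R}^{W}(y) \subset D$ for each $y \in D$, so the points produced by the one-step claim remain inside $D$ and the claim iterates: $\mathbf{R}_{k}^{W}(x_{0}) \subset D \cap \mathrm{supp}\,\eta$ for every $k \geq 1$. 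Property~(i) in the definition of a $W$-control set, applied at $x_{0} \in D$, gives $D \subset \mathrm{cl}_{W} \mathbf{R}^{W}(x_{0})$, and since $\mathrm{supp}\,\eta$ is closed in $M$ this forces $D \subset \mathrm{cl}_{M} \mathbf{R}^{W}(x_{0}) \subset \mathrm{supp}\,\eta$.

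To prove the one-step claim, let $V$ be an arbitrary open neighborhood of $y$ in $M$; shrinking $V$ if necessary, I may assume $V \subset W \subset Q$, so that $p(z, V) = p_{1}^{Q}(z, V)$ for every $z \in Q$. The hypothesis of the proposition with $k = 1$ yields $p(x, V) > 0$. Next I would verify that $z \mapsto p(z, V)$ is lower-semicontinuous on $M$ whenever $V$ is open: if $z_{n} \to z$ then continuity of $f$ gives
\[
\{\omega \in \Omega : f(z, \omega) \in V\} \subset \liminf_{n \to \infty} \{\omega \in \Omega : f(z_{n}, \omega) \in V\},
\]
and Fatou's lemma applied to $\nu$ then produces $p(z, V) \leq \liminf_{n} p(z_{n}, V)$. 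Consequently $O := \{z \in M : p(z, V) > 0\}$ is open and contains $x$; since $x \in \mathrm{supp}\,\eta$ and $\mathrm{supp}\,\eta \subset Q$, one has $\eta(O \cap Q) > 0$.

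Applying the defining identity~(\ref{quasi}) of the quasi-stationary measure to the Borel set $V \cap Q = V$ then yields
\[
\rho\, \eta(V) \;=\; \int_{Q} p(z, V)\, \eta(dz) \;\geq\; \int_{O \cap Q} p(z, V)\, \eta(dz) \;>\; 0,
\]
so $\eta(V) > 0$ and $y \in \mathrm{supp}\,\eta$, finishing the one-step claim. The main technical point is the lower-semicontinuity of $p(\cdot, V)$ for open $V$; everything else is organizational. Note that only the base case $k = 1$ of the hypothesis is actually used: the invariance of $D$ in $W$ lets a single-step positivity statement propagate along arbitrarily long trajectories that never leave $D$.
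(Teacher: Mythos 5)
Your argument is correct and takes a genuinely different route from the paper's. The paper argues by contradiction: it picks a putative point $y\in D\setminus\mathrm{supp}\,\eta$, uses Proposition~\ref{PS_Proposition 4.15} to replace $y$ by a nearby point of the transitivity set $D_{0}$ (so that $y$ is \emph{exactly} reachable from $x\in\mathrm{supp}\,\eta\cap D$ in some $k$ steps, not just approximately), then uses continuity of $\varphi(k,\cdot,u)$ in the initial condition to spread positive probability onto a neighborhood $V(y)$, and finally invokes the $k$-step quasi-stationarity identity~(\ref{quasi_k}) for a contradiction. You instead prove a direct one-step propagation lemma and iterate. The substantive differences are: (a) you only ever use the $k=1$ transition kernel and the basic identity~(\ref{quasi}), whereas the paper needs the iterated kernels $p_k^Q$ and~(\ref{quasi_k}); (b) you replace the continuity-of-the-flow argument by a lower-semicontinuity argument for $z\mapsto p(z,V)$ on open $V$, obtained from joint continuity of $f$ together with Fatou's lemma — a clean and reusable observation; (c) you never actually need the transitivity set $D_{0}$: because $\mathrm{supp}\,\eta$ is closed you may take closures at the end and get away with the \emph{approximate} reachability $D\subset\mathrm{cl}_{W}\mathbf{R}^{W}(x_{0})$ built into the definition of a control set, whereas the paper needs $D_{0}$ to upgrade approximate to exact reachability. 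This makes your proof both more elementary and slightly more general in its use of the hypotheses, which is worth noting.

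One small remark: your step ``$\mathbf{R}^{W}(y)\subset D$ for $y\in D$'' requires that $D$ be closed in $W$ (otherwise the definition only gives $\mathbf{R}^{W}(y)\subset\mathrm{cl}_{W}D$), and that closedness comes from Proposition~\ref{PS_Proposition 4.15} under the accessibility assumption — an assumption the proposition does not explicitly state but which the paper's own proof also relies on. However, as your one-step lemma in fact only uses $x\in W\cap\mathrm{supp}\,\eta$ and never $x\in D$, you could drop the appeal to invariance of $D$ entirely: the iterates automatically stay in $W\cap\mathrm{supp}\,\eta$, which is all that is needed to conclude $\mathbf{R}^{W}(x_{0})\subset\mathrm{supp}\,\eta$ and hence $D\subset\mathrm{cl}_{W}\mathbf{R}^{W}(x_{0})\subset\mathrm{supp}\,\eta$. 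Phrasing the one-step lemma that way would make your proof independent even of the nonvoidness of $D_{0}$ and of accessibility.
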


\begin{proof}
Suppose, contrary to the assertion, that there is $y\in D\setminus
\mathrm{supp}\eta$. Since $W\setminus\mathrm{supp}\eta$ is open in $W$ there
is a neighborhood $V(y)$ of $y$ in $W$ such that $V(y)\cap\mathrm{supp}%
\eta=\varnothing$. By Proposition \ref{PS_Proposition 4.15} the transitivity
set $D_{0}$ is dense in $D$, hence we may take $y\in D_{0}$. By assumption,
there is $x\in\mathrm{supp}\eta\cap D$. Thus $x\in\mathbf{R}_{k}^{W}(y)$ for
some $k\geq1$. By continuity, there is a neighborhood $V(x)$ such that
$\mathbf{R}_{k}^{W}(z)\cap V(y)\not =\varnothing$ for all $z\in V(x)$ and, by
the definition of the support, one has $\eta(V(x))>0$. The assumption
guarantees that $p_{k}^{Q}(z,V(y))>0$ for all $z\in V(x)$. This contradicts
the quasi-stationarity property (\ref{quasi_k}), since $\eta(D\setminus
\mathrm{supp}\eta)=0$, while%
\[
\int_{Q}p_{k}^{Q}(z,D\setminus\mathrm{supp}\eta)\eta(dz)\geq\int_{V(x)}%
p_{k}^{Q}(z,V(y))\eta(dz)>0\text{.}%
\]

\end{proof}

The results above show that the metric invariance entropy of a subset $Q$ of
the state space is already determined on a subset $K$ that can be
characterized using controllability properties. For the topological invariance
entropy of systems in continuous time, an analogous result has been shown in
Colonius and Lettau \cite[Theorem 5.2]{ColoL16}.

Finally, we present two examples illustrating $W$-control sets and their
relation to invariance entropy. First we take a closer look at Example
\ref{Example1} in order to discuss $W$-control sets in a simple situation.

\begin{example}
Recall that $f_{\alpha}:{\mathbb{R}}/{\mathbb{Z}}\times\lbrack-1,1]\rightarrow
{\mathbb{R}}/{\mathbb{Z}}$ is given by
\[
f_{\alpha}(x,\omega)=x+\sigma\cos(2\pi x)+A\omega+\alpha\mod 1.
\]
For $\alpha\leq\alpha_{0}$ there is an invariant control set $\hat{D}^{\alpha
}=[d(\alpha),e(\alpha)]\not ={\mathbb{R}}/{\mathbb{Z}}$ that varies
continuously with $\alpha$. For $\alpha>\alpha_{0}$ the only control set is
the invariant control set $\hat{D}^{\alpha}={\mathbb{R}}/{\mathbb{Z}}$.

Now consider $\alpha>\alpha_{0}$ and $W=(0.2,0.5)$. There is a unique
invariant $W$-control set $D^{\alpha}$, which has the form $D^{\alpha
}=[d(\alpha),0.5)$. Then one easily sees that the invariant $W$-control sets
$D^{\alpha}$ are closed in $W$ and their transitivity sets are nonvoid.
Furthermore, the closure $\mathrm{cl}D^{\alpha}=[d(\alpha),0.5]$ is invariant
in $Q:=\mathrm{cl}W=[0,2,0.5]$.

For the uniform distribution $\nu$ on $\Omega:=[-1,1]$, \cite[Theorem
3]{ColoHK10} implies that for all $\alpha>0$ there is a unique stationary
measure $\hat{\eta}^{\alpha}$ satisfying $\eta(B)=\int_{{\mathbb{R}%
}/{\mathbb{Z}}}p(x,B)\eta(dx)$ for all $B\subset{\mathbb{R}}/{\mathbb{Z}}$. It
has support equal to the invariant control set $\hat{D}^{\alpha}$. For
$\alpha>\alpha_{0}$ Theorem \ref{Theorem_B} shows that for every
quasi-stationary measure $\eta^{\alpha}$ of $Q=\mathrm{cl}W$ the invariance
entropy of $Q$ coincides with the invariance entropy of the closure
$\mathrm{cl}D^{\alpha}=[d(\alpha),0.5]$ of the invariant $W$-control set
$D^{\alpha}$ as already seen in Example \ref{Example1}. From Proposition
\ref{Proposition_support} we obtain the additional information that the
quasi-stationary measure $\eta_{\alpha}$ has support equal to $\mathrm{cl}%
D^{\alpha}=[d(\alpha),0.5]$.
\end{example}

We modify this example, so that in addition to an invariant $W$-control set
$D_{2}^{\alpha}$ there is there is a second $W$-control set $D_{1}^{\alpha}$
(to the left of $D_{2}^{\alpha}$) which is not invariant.

\begin{example}
\label{Example2}Define $f_{\alpha}:{\mathbb{R}}/{\mathbb{Z}}\times
\lbrack-1,1]\rightarrow{\mathbb{R}}/{\mathbb{Z}}$ by%
\begin{equation}
f_{\alpha}(x,\omega)=x+\sigma\cos(4\pi x)+A\omega+\alpha\mod 1. \label{Ex2}%
\end{equation}
Here take $Q=[0,1,0.7]$ and with $W=(0.1,0.7)$ the invariant $W$-control set
is $D_{2}^{\alpha}=[d(\alpha),0.7)$ (to the right). The $W$-control set
$D_{1}^{\alpha}=[a(\alpha),b(\alpha))$ (to the left) is not invariant in $W$,
since exit to the right is possible. Invoking Theorem \ref{Theorem_B} one sees
that for every quasi-stationary measure $\eta^{\alpha}$ of $Q=\mathrm{cl}W$
(such that $f_{\alpha}(\cdot,\omega),\omega\in\lbrack-1,1]$, are nonsingular)
the invariance entropy for $\mu^{\alpha}$ of $Q$ coincides with the invariance
entropy of $\mathrm{cl}D_{2}^{\alpha}$. As above, for the uniform distribution
$\nu$ on $\Omega$ there is no stationary measure with support contained in $Q$.
\end{example}

The many open problems in this area include the following\textbf{. }When is
the support of a quasi-stationary measure contained in the (closure of) the
union of the invariant $W$-control sets? In this situation, Theorems
\ref{TheoremA_alt} and \ref{TheoremA} would hold trivially, since, naturally,
the metric invariance entropy is determined on the support of the
quasi-stationary measure (cf. also Remark \ref{vanDoornPollett}). Theorem
\ref{Theorem_B} reduces the analysis of the invariance entropy from arbitrary
closed sets $Q$ to invariant $W$-control sets. Hence their measure theoretic
invariance entropy is of particular interest. For control sets, the
topological invariance entropy has been characterized in Kawan \cite{Kawa11b}
and da Silva and Kawan \cite{daSilKawa16a} using hyperbolicity and Lyapunov exponents.

\end{document}